\documentclass{amsart}

\usepackage{a4wide,amscd,amsmath,amssymb,amsthm}
\usepackage[all]{xy}

\theoremstyle{plain}
\newtheorem{thm}{Theorem}[section]

\newtheorem{lem}[thm]{Lemma}

\newtheorem{cor}[thm]{Corollary}

\theoremstyle{definition}

\newtheorem{obs}[thm]{Remark}

\def\Z{\ensuremath{\mathbb{Z}}}
\def\R{\ensuremath{\mathbb{R}}}

\def\ol{\overline}
\def\t{\text}

\numberwithin{equation}{section}

\begin{document}

\title{The Third Homotopy Group as a $\pi_1$--Module}

\date{\today}

\author{Hans-Joachim Baues and Beatrice Bleile}

\address{Max Planck Institut f{\"u}r Mathematik\\
Vivatsgasse 7\\
D--53111 Bonn, Germany}

\email{baues@mpim-bonn.mpg.de}

\address{Second Author's Home Institution: 
School of Science and Technology\\
University of New England\\
NSW 2351, Australia}

\email{bbleile@une.edu.au}

\begin{abstract}
%The homotpy groups of a space, $X$, are modules over the fundamental group, $\pi_1 X$. 
It is well--known how to compute the structure of the second homotopy group of a space, $X$, as a module over the fundamental group, $\pi_1X$, using the homology of the universal cover and the Hurewicz isomorphism. We describe a new method to compute the third homotopy group, $\pi_3 X$, as a module over $\pi_1 X$. Moreover, we determine $\pi_3 X$ as an extension of $\pi_1 X$--modules derived from Whitehead's Certain Exact Sequence. Our method is based on the theory of quadratic modules. Explicit computations are carried out for pseudo--projective $3$--spaces $X = S^1 \cup e^2 \cup e^3$ consisting of exactly one cell in each dimension $\leq 3$. 
\end{abstract}

\subjclass[2000]{}
\keywords{}

\maketitle

\section{Introduction}
Given a connected $3$--dimensional CW--complex, $X$, with universal cover, $\widehat X$, Whitehead's Certain Exact Sequence \cite{Wh2} yields the short exact sequence
\begin{equation}\label{ses1}
\xymatrix{
\Gamma \pi_2 X \, \ar@{>->}[r] & \pi_3 X \ar@{->>}[r] & \text{H}_3 \widehat X}
\end{equation}
of $\pi_1$--modules, where $\pi_1 = \pi_ 1(X)$. As a group, the homology $\text{H}_3 \widehat X$ is a subgroup of the free abelian group of cellular $3$--chains of $\widehat X$, and thus itself free abelian. Hence the sequence splits as a sequence of abelian groups. This raises the question whether (\ref{ses1}) splits as a sequence of $\pi_1$--modules -- there are no examples known in the literature.

It is well--known how to compute $\pi_2(X) \cong \text{H}_2 \widehat X$ as a $\pi_1$--module, using the Hurewicz isomorphism, and how to compute $\text{H}_3 \widehat X$ using the cellular chains of the universal cover. In this paper we compute  $\pi_3(X)$ as $\pi_1$--module and (\ref{ses1}) as an extension over $\pi_1$. We answer the question above by providing an infinite family of examples where (\ref{ses1}) does not split over $\pi_1$, as well as an infinite family of examples where it does split over $\pi_1$. As a first surprising example we obtain
\begin{thm}\label{non-trivialpi1action}
There is a connected $3$--dimensional CW--complex $X$ with fundamental group $\pi_1 = \pi_1 X = \Z / 2\Z$, such that $\pi_1$ acts trivially on both $\Gamma \pi_2 X$ and $\text{H}_3 \widehat X$, but non--trivially on $\pi_3 X$. Hence 
\begin{equation*}
\xymatrix{
\Gamma \pi_2 X \, \ar@{>->}[r] & \pi_3 X \ar@{->>}[r] & \text{H}_3 \widehat X}
\end{equation*}
does not split as a sequence of $\pi_1$--modules.
\end{thm}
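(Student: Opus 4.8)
The plan is to produce the example explicitly as a pseudo--projective $3$--space. Set $X = X_m := S^1 \cup_2 e^2 \cup_m e^3$, where $e^2$ is attached to $S^1$ by a map of degree $2$, so that the $2$--skeleton $X^2 = S^1 \cup_2 e^2$ is the projective plane $\R P^2$ with $\pi_1 X^2 = \Z/2\Z$, and $e^3$ is attached by $m$ times a generator of $\pi_2 X^2 \cong \Z$, where $m$ is a suitable even integer (already $m = 2$ should work). Attaching cells of dimension $\ge 3$ does not change $\pi_1$, so $\pi_1 := \pi_1 X = \Z/2\Z$; write $\pi_1 = \langle t\rangle$ and $\Lambda = \Z[\pi_1]$. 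First I would record the elementary input from the cellular chain complex of the universal cover,
\[
C_*\widehat X:\qquad \Lambda \xrightarrow{\ \cdot\, m(1-t)\ }\Lambda\xrightarrow{\ \cdot\,(1+t)\ }\Lambda\xrightarrow{\ \cdot\,(1-t)\ }\Lambda
\]
in degrees $3,2,1,0$ (the differential $1+t$ comes from the degree--$2$ attaching map of $e^2$, and $m(1-t)$ records that the lift to $\widehat{X^2}=S^2$ of the attaching map of $e^3$ represents $m$ times the fundamental class). For $m\neq 0$ this yields $\pi_2 X = H_2\widehat X = \Z/m\Z$ with $t$ acting by $-1$, and $H_3\widehat X = \ker(m(1-t)) = \Z(1+t)\cong\Z$ with $t$ acting trivially. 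Since $\Gamma$ is a quadratic functor (so that $\Gamma(-\mathrm{id}) = \mathrm{id}$, because $\gamma(-a)=\gamma(a)$ for the universal quadratic map), $\pi_1$ acts trivially on $\Gamma\pi_2 X = \Gamma(\Z/m\Z)$ as well. This establishes the two triviality assertions of the theorem; and for $m$ even, $\Gamma(\Z/m\Z)\cong\Z/2m\Z$ has a unique element of order $2$.

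It remains to show $\pi_1$ acts nontrivially on $\pi_3 X = \pi_3\widehat X$, and I would first reduce this to a single binary invariant. By naturality, the deck transformation $t$ acts compatibly on the whole of Whitehead's sequence; by the above it is the identity on $\Gamma\pi_2 X$ and on $H_3\widehat X$. Choose $\xi\in\pi_3 X$ mapping to a generator of $H_3\widehat X$ and let $q\in\pi_3 X$ be the image of a generator of $\Gamma\pi_2 X$; then $t\cdot\xi = \xi + c\,q$ with $c$ well defined modulo the order of $q$, and $t^2 = 1$ forces $2c\,q = 0$, so $c\,q$ is either $0$ or the order--$2$ element of $\Gamma\pi_2 X$. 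Equivalently, (\ref{ses1}), being an extension of $\pi_1$--modules with trivial action at both ends, is classified by an element of $\mathrm{Ext}^1_\Lambda(\Z,\Gamma\pi_2 X)\cong H^1(\Z/2\Z;\Z/2m\Z) \cong \Z/2\Z$, and the action is nontrivial precisely when this class does not vanish.

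The heart of the proof --- and the step I expect to be the main obstacle --- is this computation of $c$: it is a genuinely secondary phenomenon, invisible on the associated graded of (\ref{ses1}), and it is exactly here that the quadratic--module machinery of the paper is needed. The plan is to model $X$ by its quadratic module, on which the cellular $\pi_1$--action on $\widehat X$ induces an action; the induced action on $\pi_3$ differs from the diagonal action on $\Gamma\pi_2 X\oplus H_3\widehat X$ by a correction term that the quadratic module makes computable, built from the quadratic part of the module together with the interchange by $t$ of the two lifted $3$--cells and the antipodal self--equivalence of $\widehat{X^2}=S^2$. Carrying this out should give $c\,q \neq 0$ for $m$ even (already for $m=2$, where it is the order--$2$ element of $\Gamma(\Z/2\Z)=\Z/4\Z$), while for odd $m$ one gets $c=0$ and (\ref{ses1}) splits over $\pi_1$, giving the complementary family mentioned in the introduction.

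Finally, the theorem follows: if (\ref{ses1}) split as a sequence of $\pi_1$--modules we would have $\pi_3 X\cong\Gamma\pi_2 X\oplus H_3\widehat X$ as $\pi_1$--modules, with $\pi_1$ acting trivially on each summand and hence on $\pi_3 X$, contradicting the nontrivial action just exhibited.
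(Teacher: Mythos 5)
Your setup is exactly the paper's: the example is the pseudo--projective $3$--space $P_{2,x}$ with $x=\tilde x([\ol1]-[\ol0])$, $\tilde x=m$ even, and your computation of the cellular chain complex of $\widehat X$, of $\pi_2 X=\Z/m\Z$ with $t$ acting by $-1$, of $H_3\widehat X\cong\Z$ with trivial action, and the observation that $\Gamma(-\mathrm{id})=\mathrm{id}$ kills the action on $\Gamma\pi_2 X$, are all correct and agree with Lemma \ref{speciald_x} and Theorem \ref{f=2}. The reduction of the nontriviality of the action to a single class $c\,q$ with $2c\,q=0$ (equivalently to a class in $H^1(\Z/2\Z;\Gamma\pi_2 X)$) is also sound.

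However, there is a genuine gap at precisely the point you flag as ``the heart of the proof'': you never compute $c$. You state that modelling $X$ by its quadratic module ``should give'' $c\,q\neq0$ for $m$ even, but no computation is performed, and the sign of the answer is exactly what the theorem asserts, so it cannot be taken on faith. Note in particular that your reduction shows only that $c\,q$ lies in a group of order $2$; nothing in what you wrote rules out $c=0$, and indeed for $m$ odd the same setup yields $c=0$, so the parity dependence is invisible at the level of your argument. The missing content is the entire machinery of Sections \ref{nil2comp}--\ref{quadcomp}: one realises $\pi_3(P_{f,x})$ inside the totally free quadratic module $(\omega,\delta,\partial)$ with $\delta(e_3)=s(x)$, corrects the naive splitting $t_x$ to $u_x(y)=t_x(y)-\omega\mu(x,y)$ so that it lands in $\ker\delta=\pi_3$, and then computes the cross--effect $B(y)=(u_x(y))^1-u_x(y^1)$ via Lemmata \ref{scross2}, \ref{mu} and \ref{tcross2}; the outcome (Lemma \ref{u_xcrossaction}) is $B(\tilde yN)=-\tilde x\tilde y\,\gamma q([\ol1]-[\ol0])$, which in $\Gamma(\Z/\tilde x\Z)\cong\Z/2\tilde x\Z$ is the element of order $2$ when $\tilde x$ is even, and $0$ when $\tilde x$ is odd. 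Without carrying out this (or an equivalent) secondary computation, the proof establishes only the two triviality statements and the framework, not the theorem itself.
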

Below we describe examples for all finite cyclic fundamental groups, $\pi_1$, of even order, where (\ref{ses1}) does not split over $\pi_1$. The examples we consider are CW--complexes,
\[X = S^1 \cup e^2 \cup e^3,\]
with precisely one cell, $e^i$, in every dimension $i = 0, 1, 2, 3$. In general, we obtain such a CW--complex, $X$, by first attaching the 2--cell $e_2$ to $S^1$ via $f \in \pi_1S^1 = \Z$. We assume $f>0$. This yields the 2--skeleton of $X$, $X^2 = P_f$, which is a pseudo--projective plane, see \cite{Olum}. Then $\pi_1 = \pi_1 X = \pi_1 P_f = \Z/f\Z$ is a cyclic group of order $f$. We write $R = \Z [\pi_1]$ for the integral group ring of $\pi_1$ and $K$ for the kernel of the augmentation $\varepsilon: R \rightarrow \Z$. Then the pseudo--projective $3$--space, $X=P_{f,x}$, is determined by the pair, $(f,x)$, of attaching maps, where $x \in \pi_2 P_f = K$ is the attaching map of the 3--cell $e_3$. In this case 
\[\pi_2(X) = \text{H}_2(\widehat X) = K / xR,\]
and 
\[\text{H}_3 \widehat X = \ker (d_x: R \rightarrow R, x \mapsto xy), \]
where $xy$ is the product of $x, y \in R$.

A splitting function $u$ for the exact sequence (\ref{ses1}) is a function between sets, $u: \text{H}_3 \widehat X \rightarrow \pi_3 X$, such that $u(0) = 0$ and the composite of $u$ and the projection $\pi_3 X \twoheadrightarrow \text{H}_3 \widehat X$  is the identity. Such a splitting function determines maps
\[A=A_u: \text{H}_3 \widehat X \times \text{H}_3 \widehat X \rightarrow \Gamma(\pi_2 X)
\quad \text{and} \quad B=B_u: \text{H}_3 \widehat X \rightarrow \Gamma(\pi_2 X),\]
by the cross--effect formul\ae
\[A(y,z) = u(y+z) - (u(y) + u(z)) \quad \text{and} \quad B(y) = (u(y))^1 - u(y^1).\]
Here $B$ is determined by the action of the generator $1$ in the cyclic group $\pi_1$, denoted by $y \mapsto y^1$.
\begin{obs}\label{obsAB}
The functions $A$ and $B$ determine $\pi_3 X$ as a $\pi_1$--module. In fact, the bijection $\text{H}_3 \widehat X \times \Gamma(\pi_2 X) = \pi_3(P_{f,x})$, which assigns to $(y,v)$ the element $u(y)+v$ is an isomorphism of $\pi_1$--modules, where the left hand side is an abelian group by 
\[(y,v) + (z,w) = (y+z, v+w+A(y,z))\]
and a $\pi_1$--module by
\[(y,v)^1 = (y^1, v^1+B(y)).\]
The cross--effect of $B$ satisfies
\begin{eqnarray*}
B(y+z) - (B(y) + B(z)) = (A(y,z))^1 - A(y^1,z^1),
\end{eqnarray*}
such that $B$ is a homomorphism of abelian groups if $A=0$.
\end{obs}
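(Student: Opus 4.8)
Everything will be read off from exactness of \eqref{ses1}, using crucially that it is a sequence of $\pi_1$-modules. Write $p \colon \pi_3 X \twoheadrightarrow \text{H}_3 \widehat X$ for the projection and regard $\Gamma(\pi_2 X)$ as the $\pi_1$-submodule $\ker p$ of $\pi_3 X$ via the inclusion in \eqref{ses1}. For $\alpha \in \pi_3 X$ set $y = p\alpha$; since $pu = \mathrm{id}$, the element $\alpha - u(y)$ lies in $\ker p = \Gamma(\pi_2 X)$, so $\alpha = u(y) + v$ with $v := \alpha - u(y)$, and $(y,v)$ is visibly the unique such pair. Hence $(y,v) \mapsto u(y)+v$ is a bijection $\text{H}_3\widehat X \times \Gamma(\pi_2 X) \to \pi_3 X$ sending $(0,0)$ to $u(0) = 0$, and the task reduces to transporting the abelian group structure and the $\pi_1$-action of $\pi_3 X$ along this bijection.

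For the addition I would compute $(u(y)+v) + (u(z)+w)$ directly: $\pi_3 X$ is abelian, so the $u$-terms may be collected, and the definition of $A$ gives the identity $u(y)+u(z) = u(y+z) - A(y,z)$, where $A(y,z) \in \ker p$ because $p$ sends both $u(y+z)$ and $u(y)+u(z)$ to $y+z$. Substituting this into the sum and recognising $v + w - A(y,z)$ as an element of $\Gamma(\pi_2 X)$ yields the asserted addition formula (the sign of the $A$-term being a matter of convention). Associativity and commutativity of the resulting operation need no separate verification: by construction it is the transport of the abelian group law of $\pi_3 X$.

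For the module structure the essential facts are that $\pi_1$ acts on $\pi_3 X$ by additive automorphisms and that $p$ is $\pi_1$-equivariant, both because \eqref{ses1} is a sequence of $\pi_1$-modules. Hence $(u(y)+v)^1 = (u(y))^1 + v^1$, and by the definition of $B$ we have $(u(y))^1 = u(y^1) + B(y)$ with $B(y) \in \ker p$, since $p((u(y))^1) = (p u(y))^1 = y^1 = p(u(y^1))$. Collecting terms yields $(y,v)^1 = (y^1,\, v^1 + B(y))$. Because $\pi_1$ is cyclic, generated by $1$, the action of $1$ pins down the entire module structure, so $A$ and $B$ do determine $\pi_3 X$ as a $\pi_1$-module.

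Finally, the cross-effect identity for $B$ is a one-line substitution. Starting from $B(y+z) = (u(y+z))^1 - u((y+z)^1)$, I would insert $u(y+z) = u(y)+u(z)+A(y,z)$ and $(y+z)^1 = y^1+z^1$, apply additivity of the action of $1$, insert $u(y^1+z^1) = u(y^1)+u(z^1)+A(y^1,z^1)$, and regroup so that $(u(y))^1 - u(y^1)$ and $(u(z))^1 - u(z^1)$ appear; what is left over is precisely $(A(y,z))^1 - A(y^1,z^1)$. In particular $A = 0$ forces $B(y+z) = B(y)+B(z)$, so $B$ is then a homomorphism of abelian groups. I anticipate no genuine obstacle here: the only steps calling for any care are the well-definedness and uniqueness of the decomposition $\alpha = u(y)+v$ and the verifications that $A$ and $B$ land in $\Gamma(\pi_2 X) = \ker p$; the remainder is bookkeeping, namely transport of structure plus additivity of the $\pi_1$-action on $\pi_3 X$.
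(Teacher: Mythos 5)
Your argument is correct and is precisely the transport-of-structure verification that the paper leaves implicit (the remark is stated without proof): uniqueness of the decomposition $\alpha = u(p\alpha) + (\alpha - u(p\alpha))$, the observation that $A$ and $B$ land in $\ker p = \Gamma(\pi_2 X)$, and the $\pi_1$-equivariance and additivity of $p$ and of the action do all the work, with the cross-effect identity for $B$ following by the substitution you describe. Your parenthetical about the sign is also right: with the paper's definition $A(y,z)=u(y+z)-(u(y)+u(z))$ the transported addition is $(y+z,\,v+w-A(y,z))$, so either the sign in the definition of $A$ or the sign in the stated addition formula must be flipped --- harmless in the sequel, where $A=0$ in all computed examples.
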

In this paper we describe a method to determine a splitting function $u = u_x$, which, a priori, is not a homomorphism of abelian groups. We investigate the corresponding functions $A$ and $B$ and compute them for a family of examples.
\begin{thm}\label{AandBforex}
Let $X = P_{f,x}$ be a pseudo--projective $3$--space with $x = \tilde x ([\ol{1}] - [\ol{0}]) \in K, \tilde x \in \Z, \tilde x \neq 0$ and $f > 1$. Let $N = \sum_{i=0}^{f-1} [\ol{i}]$ be the norm element in $R$. Then
\[\text{H}_3(\widehat P_{f,x}) = \{ \tilde y N \, | \, \tilde y \in \mathbb Z\} \cong \Z\]
is a $\pi_1$--module with trivial action of $\pi_1$, and
\[\pi_2(P_{f,x}) = (\Z / \tilde x \Z) \otimes_\Z K,\]
with the action of $\pi_1$ induced by the $\pi_1$--module $K$. There is a splitting function $u = u_x$ such that, for $y = \tilde{y} N$ and $z \in \text{H}_3(\widehat P_{f,x})$, the functions $A$ and $B$ are given by
\begin{eqnarray*}
& A(y,z)  & =  \quad 0\\[3pt]
& B(y) & =  \quad - \tilde x \tilde y \gamma q ([\ol{1}] - [\ol{0}]),
\end{eqnarray*}
where $\gamma: \pi_2(P_{f,x}) \rightarrow \Gamma(\pi_2(P_{f,x}))$ is the universal quadratic map for the Whitehead functor $\Gamma$ and $q: K \rightarrow \pi_2(P_{f,x}), k \mapsto 1 \otimes k$. As in \ref{obsAB}, the pair $A, B$ computes $\pi_3 X$ as a $\pi_1$--module. 
\end{thm}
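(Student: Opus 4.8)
The plan is to pin down the two $\pi_1$--modules by elementary ring theory in $R = \Z[\pi_1] = \Z[t]/(t^f-1)$ (writing $t = [\ol{1}]$, so that $x = \tilde x(t-1)$), and then to read off $A$ and $B$ from the quadratic module model of $P_{f,x}$. For the modules: since $tN = N$ and $R$ is free as an abelian group, the annihilator of $t-1$ in $R$ is exactly $N\Z$; as $\tilde x \neq 0$, multiplication by $x$ and by $t-1$ have the same kernel, so $\text{H}_3(\widehat P_{f,x}) = \ker d_x = \mathrm{Ann}_R(t-1) = \{\tilde y N \mid \tilde y \in \Z\} \cong \Z$, on which $\pi_1$ acts trivially because $Nt = N$. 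Next, $(t-1)R = K$: both are free abelian of rank $f-1$ inside $R$, and the induced surjection $R/(t-1)R \twoheadrightarrow R/K$ of copies of $\Z$ is therefore an isomorphism; hence $xR = \tilde x(t-1)R = \tilde x K$ and $\pi_2(P_{f,x}) = K/xR = K/\tilde x K \cong (\Z/\tilde x\Z)\otimes_\Z K$, with action induced from $K$, as claimed.

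Next I would invoke the quadratic module model of $P_{f,x}$: its underlying chain complex is the cellular chain complex $R \xrightarrow{\,\cdot x\,} R \xrightarrow{\,\cdot N\,} R \xrightarrow{\,\cdot(t-1)\,} R$ of $\widehat P_{f,x}$, and its only quadratic datum comes from the attaching element $x \in \pi_2 P_f = K$ together with its image $\gamma(x) \in \Gamma K = \pi_3 P_f$. The general construction produces from this a splitting function $u^0$ for (\ref{ses1}). Since $\text{H}_3(\widehat P_{f,x})$ is infinite cyclic on $N$, I would replace $u^0$ by $u = u_x$ defined by $u_x(\tilde y N) := \tilde y\cdot u^0(N)$; this is again a splitting function, it has $A \equiv 0$ by construction, and $B(N) = u^0(N)^1 - u^0(N)$ is unchanged since $N^1 = N$. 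By Remark \ref{obsAB} the vanishing of $A$ forces $B$ to be additive, so $B(\tilde y N) = \tilde y B(N)$, and the whole theorem reduces to computing the single element $B(N) \in \Gamma\pi_2(P_{f,x}) = \ker\bigl(\pi_3 X \to \pi_3(X, X^2)\bigr)$, where $X^2 = P_f$.

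To compute $B(N)$ I would identify $\pi_3(X, X^2) \cong R$ by relative Hurewicz, with generator the characteristic map $\xi$ of $e^3$ and $\partial\xi = x$; then $N\xi$ is a cycle because $Nx = \tilde x N(t-1) = 0$ in $K$, and $u^0(N)$ is the distinguished lift of $N\xi$ supplied by the model. The deck transformation fixes $N\xi$, so $u^0(N)^1 - u^0(N)$ lands in $\Gamma\pi_2 X$ and is evaluated by the quadratic structure map of the model along a carrier telescoping over the $f$ lifts $e^3, te^3, \dots, t^{f-1}e^3$, whose successive boundary contributions $t^i x = \tilde x(t^{i+1}-t^i)$ leave, after one turn around the length--$f$ cycle, precisely the residue $-\tilde x\,\gamma\bigl(q(t-1)\bigr)$ with $q\colon K \to \pi_2 X$. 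Together with $B(\tilde y N) = \tilde y B(N)$ and Remark \ref{obsAB} this yields all the asserted formulas and assembles $\pi_3 X$ as a $\pi_1$--module.

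I expect this last step to be the main obstacle. One has to extract the exact coefficient and sign of the quadratic residue from the structure map of the quadratic module; in particular one must see why the bilinear cross--effect terms between consecutive lifts convert the naive quadratic coefficient $\tilde x^2$ (arising from $\gamma(\tilde x(t-1)) = \tilde x^2\gamma(t-1)$) into the linear coefficient $-\tilde x$. One should also check that the distinguished lift $u^0(N)$ is the one for which the formula is clean, since $B(N)$ is a priori only well defined modulo $\{v^1 - v \mid v \in \Gamma\pi_2 X\}$, and this subgroup is nonzero in general because $\pi_1$ acts nontrivially on $K$ and hence on $\Gamma\pi_2 X$.
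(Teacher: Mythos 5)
Your first two paragraphs are sound. The identifications $\ker d_x=\mathrm{Ann}_R(t-1)=N\Z$ and $xR=\tilde xK$, hence $\pi_2=(\Z/\tilde x\Z)\otimes_\Z K$, match the paper's Lemma \ref{speciald_x}. Your device of replacing the model's splitting by $u_x(\tilde yN):=\tilde y\,u^0(N)$ is a legitimate (and slicker) way to get $A\equiv 0$, since $\text{H}_3\widehat P_{f,x}\cong\Z$ is free; the paper instead keeps its canonical $u_x(y)=t_x(y)-\omega\mu(x,y)$ and proves $A=0$ by showing every term of $\Psi(y,z)-\mu(x,y|z)$ carries a factor $\tilde x^2$, which annihilates $\Gamma(\pi_2)$. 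Either way, $A=0$ plus the cross--effect identity of Remark \ref{obsAB} reduces everything to the single element $B(N)$, as you say.

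The genuine gap is that $B(N)$ is never computed. The entire content of the theorem beyond the module identifications is the equality $B(N)=-\tilde x\,\gamma q([\ol{1}]-[\ol{0}])$, and your third paragraph only describes where the answer ought to come from (``telescoping over the $f$ lifts \dots leave, after one turn around the cycle, precisely the residue''), while your closing paragraph concedes that extracting ``the exact coefficient and sign'' is unresolved. That extraction is exactly the nontrivial step. In the paper it rests on two explicit cross--effect formulas in the nil(2)-- and quadratic--module models: the action of the generator produces a wrap--around correction $e^{j+f}=e^j+\langle e^j,e^j\rangle$ (resp.\ its analogue for $e_3$ via axiom (\ref{qdef1})), whose contribution is \emph{linear} in the chain coefficients --- these are the terms $L_k(b)=\sum_q x_{\ol{q+f-k}}\,[\ol q]\otimes[\ol q]$ in Lemma \ref{scross2} and $y_{\ol{f-1}}(x\otimes[\ol 0]+[\ol 0]\otimes x)$ in Lemma \ref{tcross2} --- while all genuinely bilinear terms carry $\tilde x^2$ and die in $\Gamma(\pi_2)$ because $\tilde x^2\,\Gamma((\Z/\tilde x\Z)\otimes K)=0$. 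Assembling $B(y)=\omega\bigl(\ol\Psi_1(y)-(\mu(x,y))^1+\mu(x,y)\bigr)$ with $\mu(x,N)=\tilde x\tilde y\,[\ol 0]\otimes[\ol 0]$ then yields the stated value. Without this (or an equivalent) computation your argument establishes only that $B(N)$ lies in $\Gamma(\pi_2)$ and is some multiple of a quadratic residue; since different splittings change $B(N)$ by elements $v^1-v$ with $v\in\Gamma(\pi_2)$ --- a nonzero subgroup here, as you note --- the coefficient $-\tilde x$ cannot be inferred from general principles and must be calculated for the specific $u_x$.
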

As $\text{H}_3(\widehat X)$ is free abelian, the exact sequence  (\ref{ses1}) always allows a splitting function which is a homomorphism of abelian groups. This leads, for $X = P_{f,x}$, to the injective function
\[\tau: \text{Ext}_{\pi_1}(\text{H}_3(\widehat X), \Gamma(\pi_2 X)) \rightarrowtail \text{coker}(\beta),\]
with
\[\beta: \text{Hom}_\Z(\text{H}_3(\widehat X),  \Gamma(\pi_2 X)) \rightarrow \text{Hom}_\Z(\text{H}_3(\widehat X),  \Gamma(\pi_2X)),t \mapsto \beta_t,\]
given by
\[\beta_t(\ell) = - t(\ell^1) + (t(\ell))^1.\]
The function $\tau$ maps the equivalence class of an extension to the element in $\text{coker}\beta$ represented by $B = B_u$, where $u$ is a $\Z$--homomorphic splitting function for the extension. Hence the equivalence class, $\{\pi_3 X\}$, of the extension $\pi_3 X$ in (\ref{ses1}) is determined by the image $\tau\{\pi_3 X\} \in \text{coker}(\beta)$. For the family of examples in \ref{AandBforex} we show
\begin{thm}\label{extcalc}
Let $X = P_{f,x}$ be a pseudo--projective $3$--space with $x = \tilde x ([\ol{1}] - [\ol{0}]), \tilde x \in \Z , \tilde x \neq 0$ and $f > 1$. Then $\beta: \Gamma((\Z / \tilde x \Z) \otimes_\Z K) \rightarrow \Gamma((\Z / \tilde x \Z) \otimes_\Z K)$ maps $\ell$ to $-\ell + \ell^1$ and $\tau\{\pi_3 X\} \in \text{coker}(\beta)$ is represented by $\tilde x \gamma q ([\ol{1}] - [\ol{0}]) \in \Gamma(\pi_2)$. Hence $\tau\{\pi_3 X\} = 0$ if $\tilde x$ is odd, so that, in this case, $\pi_3 X$ in (\ref{ses1}) is a split extension over $\pi_1$. If both $\tilde x$ and $f$ are even, then $\tau\{\pi_3 X\}$ is a non--trivial element of order $2$, and the extension $\pi_3 X$ in (\ref{ses1}) does not split over $\pi_1$. Moreover, $\tau\{\pi_3 X\}$ is represented by $B$ in \ref{AandBforex}. If $\tilde x$ is even and $f$ is odd, then $\tau\{\pi_3 X\}$ is trivial and the extension $\pi_3 X$ in (\ref{ses1}) does split over $\pi_1$.
\end{thm}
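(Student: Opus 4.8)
The plan is to turn everything into one computation inside the group $\Gamma(\pi_2)_{\pi_1}$ of $\pi_1$-coinvariants and then to locate in it the single element coming from $B$. I begin with two formal reductions. By \ref{AandBforex} the group $\text{H}_3(\widehat X)\cong\Z$ carries the trivial $\pi_1$-action, so evaluation on the generator $N$ identifies $\text{Hom}_\Z(\text{H}_3\widehat X,\Gamma(\pi_2))$ with $\Gamma(\pi_2)$, and under this identification the formula $\beta_t(\ell)=-t(\ell^1)+(t(\ell))^1$ of the Introduction becomes $\ell\mapsto-\ell+\ell^1$ on $\Gamma(\pi_2)$; hence $\text{coker}(\beta)=\Gamma(\pi_2)_{\pi_1}$ is the coinvariant quotient. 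Next, the splitting function $u_x$ of \ref{AandBforex} satisfies $A=0$, hence is additive, hence is a legitimate $\Z$-homomorphic splitting function, so $\tau\{\pi_3 X\}$ is the class of $B=B_{u_x}$, i.e. the class of $-\tilde x\,\gamma q([\ol{1}]-[\ol{0}])\in\Gamma(\pi_2)$ under the identification above. Since $\gamma q([\ol{1}]-[\ol{0}])$ lies in the $\Gamma(\Z/\tilde x\Z)$-summand of $\Gamma(\pi_2)$ attached to $\bar a_1:=1\otimes([\ol{1}]-[\ol{0}])$, we have $2\tilde x\,\gamma q([\ol{1}]-[\ol{0}])=0$ in $\Gamma(\pi_2)$; so the sign is immaterial and $\tau\{\pi_3 X\}$ is represented by $v:=\tilde x\,\gamma q([\ol{1}]-[\ol{0}])$, an element of order at most $2$. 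As $\tau$ is injective, the whole theorem comes down to deciding whether $v$ vanishes in $\Gamma(\pi_2)_{\pi_1}$.

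To analyse the coinvariants I would use the $\Z$-basis $a_i=[\ol{i}]-[\ol{0}]$ ($1\le i\le f-1$) of $K$, the corresponding basis $\bar a_i=1\otimes a_i$ of $\pi_2=(\Z/\tilde x\Z)\otimes_\Z K$, the action $t\bar a_i=\bar a_{i+1}-\bar a_1$ (with $\bar a_f=0$), and the telescoping identity $\bar a_k=\sum_{l=0}^{k-1}t^l\bar a_1$. Using the direct-sum decomposition $\Gamma(\pi_2)=\bigoplus_i\Gamma(\Z/\tilde x\Z)\oplus\bigoplus_{i<j}\Z/\tilde x\Z$, with generators $\gamma(\bar a_i)$ and $[\bar a_i,\bar a_j]$, together with the quadraticity of $\gamma$, every $\gamma(\bar a_k)$ and every $[\bar a_i,\bar a_j]$ can be rewritten modulo $\text{image}(\beta)$ in terms of $p:=\gamma(\bar a_1)$ and the brackets $[\bar a_1,\bar a_e]$ ($1\le e\le f-1$); carrying out the telescoping bookkeeping for $\gamma(\bar a_f)=0$ yields the relation $\sum_{e=1}^{f-1}[\bar a_1,\bar a_e]\equiv f\,p$ in $\Gamma(\pi_2)_{\pi_1}$. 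Each summand on the left is killed by $\tilde x$ and $p$ is killed by $2\tilde x$, so $\tilde x f\,p\equiv0$; combined with $2\tilde x p=0$ this gives $\tilde x\gcd(2,f)\,p\equiv0$. If $f$ is odd this forces $\tilde x p=v\equiv0$, so by injectivity of $\tau$ the extension splits over $\pi_1$ --- this covers both ``$\tilde x$ even, $f$ odd'' and ``$\tilde x$ odd, $f$ odd''; and if $\tilde x$ is odd (for any $f$) then $\gamma(\bar a_1)$ has order exactly $\tilde x$ in $\Gamma(\pi_2)$, so $v=\tilde x\gamma(\bar a_1)=0$ already there.

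The remaining case, $\tilde x$ and $f$ both even, is the main obstacle: one must show that $v$ does \emph{not} vanish in $\Gamma(\pi_2)_{\pi_1}$, equivalently that $p=\gamma(\bar a_1)$ retains additive order $2\tilde x$ there. This is the delicate point, because the natural $\pi_1$-equivariant quotients of $\Gamma(\pi_2)$ --- the maps to $\pi_2/2\pi_2$ and to $\pi_2\otimes_\Z\pi_2$ --- fail to detect $v$, since $v$ is $2$-divisible in $\Gamma(\pi_2)$ although of order $2$. I would therefore complete the computation of the finitely presented abelian group $\Gamma(\pi_2)_{\pi_1}$ from the presentation above: writing out $\beta$ on all generators $\gamma(\bar a_i)$ and $[\bar a_i,\bar a_j]$, reducing, and exhibiting $\gamma(\bar a_1)$ as a generator of a $\Z/2\tilde x\Z$ direct summand (the parity of $f$ entering precisely through $\sum_e[\bar a_1,\bar a_e]\equiv fp$, which for $f$ even imposes no constraint on $p$ beyond $2\tilde x p=0$). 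This gives that $v=\tilde x p$ has order exactly $2$, hence $\tau\{\pi_3 X\}\ne0$ and, $\tau$ being injective, the extension $\pi_3 X$ does not split over $\pi_1$; since $v$ is by construction the class of $B$, the assertion that $\tau\{\pi_3 X\}$ is represented by $B$ of \ref{AandBforex} is then immediate. The telescoping arithmetic and the reduction of $\beta$ on the bracket generators are routine but bulky; the genuine content of the proof is the non-vanishing of $v$ when $\tilde x$ and $f$ are both even.
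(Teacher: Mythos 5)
Your reduction is sound, and your treatment of the splitting cases is correct --- in places cleaner than the paper's. Identifying $\text{coker}(\beta)$ with the coinvariants $\Gamma(\pi_2)_{\pi_1}$ via evaluation on the generator $N$, using $A=0$ to see that $u_x$ is already a $\Z$--homomorphic splitting so that $\tau\{\pi_3 X\}$ is the class of $B=-\tilde x\,\gamma q([\ol{1}]-[\ol{0}])$ (with the sign immaterial because $2\tilde x\,\gamma q([\ol{1}]-[\ol{0}])=0$), and deriving $f\tilde x\,\gamma(\bar a_1)\equiv 0$ from $\gamma(\bar a_f)=\gamma(0)=0$ so that $\gcd(2,f)\,\tilde x\,\gamma(\bar a_1)\equiv 0$, together dispose of every case in which the extension splits. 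The paper instead exhibits an explicit solution of its system of congruences for odd $f$; your telescoping argument is a genuine shortcut there.

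But the case the theorem is really about --- $\tilde x$ and $f$ both even, which is what feeds Theorem \ref{non-trivialpi1action} --- is not proved. You reduce it to the assertion that $\gamma(\bar a_1)$ retains order $2\tilde x$ in $\Gamma(\pi_2)_{\pi_1}$, observe correctly that the obvious equivariant quotients of $\Gamma(\pi_2)$ cannot detect $\tilde x\,\gamma(\bar a_1)$, and then defer the decisive computation as ``routine but bulky''. It is not routine, and it is the entire content of the non--splitting statement: one must check that the full image of $\beta$, including its components along all the bracket generators $[\bar a_j,\bar a_k]$, imposes no relation forcing $\tilde x\,\gamma(\bar a_1)\equiv 0$; your claim that the parity of $f$ enters only through $\sum_e[\bar a_1,\bar a_e]\equiv f\,\gamma(\bar a_1)$ is precisely what has to be demonstrated, not assumed. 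The paper's proof consists of exactly this computation: it writes out $\ell^1-\ell$ on the basis $\{\gamma q(\alpha_k),\,[q(\alpha_j),q(\alpha_k)]\}$, extracts the system $(A),(B_k),(C_k),(D_{j,k})$ of congruences modulo $2\tilde x$ and $\tilde x$, and produces an explicit integral combination of these equations yielding $\tilde x\equiv 0 \bmod 2\tilde x$, a contradiction. Until you carry out the analogous elimination, or otherwise certify that $\tilde x\,\gamma(\bar a_1)$ does not lie in $\text{im}\,\beta$ when $f$ is even, the non--split half of the theorem remains a gap.
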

This result is a corollary of \ref{AandBforex}, the computations are contained at the end of Section \ref{examples}.

Given a pseudo--projective $3$--space, $P_{f,x}$, and an element $z \in \pi_3(P_{f,x})$, we obtain a pseudo--projective $4$--space, $X = P_{f,x,z} = S^1 \cup e^2 \cup e^3 \cup e^4$, where $z$ is the attaching map of the $4$--cell $e^4$.
For $n \geq 2$, the attaching map $z$ of an $(n+1)$--cell in a CW--complex, $X$, is \emph{homologically non--trivial} if the image of $z$ under the Hurewicz homomomorphism is non--trivial in $\t{H}_n \widehat{X}^n$. 
\begin{thm}\label{trivialonpi_3}
Let $X = S^1 \cup e^2 \cup e^3 \cup e^4$ be a pseudo--projective $4$--space with $\pi_1X = \Z / 2 \Z$ and homologically non--trivial attaching maps of cells in dimension $3$ and $4$. Then the action of $\pi_1 X$ on $\pi_3 X$ is trivial.
\end{thm}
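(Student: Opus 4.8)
The plan is to reduce to the $3$--skeleton and apply Theorem~\ref{AandBforex}. Since $\pi_1 X=\Z/2\Z$, the $2$--skeleton of $X$ is the pseudo--projective plane $P_2=S^1\cup e^2$, and the $3$--skeleton is a pseudo--projective $3$--space $X^3=P_{2,x}$ with $x=\tilde x([\ol 1]-[\ol 0])\in K$; homological non--triviality of $e^3$ means that its Hurewicz image in $\text{H}_2\widehat{X^2}=\text{H}_2 S^2\cong\Z$ is non--zero, i.e.\ $\tilde x\neq 0$. The $4$--cell $e^4$ is attached along a map $z\in\pi_3(X^3)$, and homological non--triviality of $e^4$ says that the image of $z$ under the Hurewicz epimorphism $\pi_3 X^3\twoheadrightarrow\text{H}_3\widehat{X^3}=\Z N$ is a non--zero multiple $\tilde y N$ of the norm element, so $\tilde y\neq 0$. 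Thus $X=P_{2,x,z}$ with $\tilde x\neq 0\neq\tilde y$, and it suffices to understand $\pi_3 X$ as a $\pi_1$--module in these terms.

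First I would record the $\pi_1$--module $\pi_3 X^3$ using Theorem~\ref{AandBforex} and Remark~\ref{obsAB} with $f=2$: the summands $\text{H}_3\widehat{X^3}=\Z N$ and $\Gamma(\pi_2 X^3)=\Gamma\big((\Z/\tilde x\Z)\otimes_\Z K\big)=\Gamma(\Z/\tilde x\Z)$ both carry the trivial $\pi_1$--action --- for the second summand because $\pi_1$ acts on $\pi_2 X^3$ through $\pm\mathrm{id}$ and $\Gamma(-\mathrm{id})=\mathrm{id}$ --- and, since $A=0$, the bijection of Remark~\ref{obsAB} identifies $\pi_3 X^3$ with $\Z N\oplus\Gamma(\Z/\tilde x\Z)$ as an abelian group carrying the $\pi_1$--action $(y,v)^1=(y,v+B(y))$, $B(\tilde y N)=-\tilde x\tilde y\,\gamma q([\ol 1]-[\ol 0])$. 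Hence, for any $w\in\pi_3 X^3$ with Hurewicz image $aN$, one has $w^1-w=(0,-a\tilde x\,\gamma q([\ol 1]-[\ol 0]))$, which depends only on $a$ and lies in the $\Gamma$--summand. If $\tilde x$ is odd, then $\gamma q([\ol 1]-[\ol 0])$ has order $\tilde x$ in $\Gamma(\Z/\tilde x\Z)\cong\Z/\tilde x\Z$, so $B=0$, $\pi_1$ already acts trivially on $\pi_3 X^3$, and we are done; so assume from now on that $\tilde x$ is even.

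Next I would realise $\pi_3 X$ as a quotient of $\pi_3 X^3$. Passing to universal covers, $\widehat X$ is obtained from the simply connected complex $\widehat{X^3}$ by attaching the two $4$--cells $e^4$ and $t\!\cdot\! e^4$ along lifts of $z$ and $tz$; the homotopy exact sequence of the pair $(\widehat X,\widehat{X^3})$ together with the relative Hurewicz theorem yields $\pi_3 X=\pi_3\widehat X=\pi_3 X^3/zR$, where $zR$ is the cyclic $\pi_1$--submodule generated by $z$, that is, the subgroup generated by $z$ and $tz$. The induced action on the quotient sends $[w]$ to $[w]+[w^1-w]$, so it is trivial if and only if $w^1-w\in zR$ for every $w\in\pi_3 X^3$; by the previous paragraph this is equivalent to $\tilde x\,\gamma q([\ol 1]-[\ol 0])$ lying in the $\Gamma(\Z/\tilde x\Z)$--component of $zR$.

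Finally I would carry out this verification. Since $zR$ is generated by $z$ and $z^1-z=(0,-\tilde x\tilde y\,\gamma q([\ol 1]-[\ol 0]))$ and $\tilde y\neq 0$, the first coordinate of any non--zero element of $zR$ is a non--zero multiple of $\tilde y N$, so an element of the form $(0,v)$ lies in $zR$ precisely when $v$ lies in the cyclic subgroup of $\Gamma(\Z/\tilde x\Z)$ generated by $\tilde x\tilde y\,\gamma q([\ol 1]-[\ol 0])$. As $\tilde x$ is even, $\gamma q([\ol 1]-[\ol 0])$ generates $\Gamma(\Z/\tilde x\Z)\cong\Z/2\tilde x\Z$, so $\tilde x\,\gamma q([\ol 1]-[\ol 0])$ is its unique element of order $2$, and it remains to see that this element lies in the subgroup generated by $\tilde x\tilde y\,\gamma q([\ol 1]-[\ol 0])$; this is precisely the place where the homological non--triviality of the attaching maps of $e^3$ and $e^4$ enters, and granting it we obtain $w^1-w\in zR$ for all $w$, so that $\pi_1$ acts trivially on $\pi_3 X$. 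I expect this last step --- controlling the extra $2$--torsion that $\Gamma$ introduces for a finite cyclic group of even order, and tracking exactly how $z^1-z$ absorbs it --- to be the main obstacle, the remainder being the formal reduction described above.
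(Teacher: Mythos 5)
Your reduction is exactly the paper's: identify $X^3=P_{2,x}$ with $\tilde x\neq0$, write the attaching map of $e^4$ as $z\in\pi_3 X^3$ with Hurewicz image $\tilde y N$, $\tilde y\neq0$, obtain $\pi_3X=\pi_3X^3/zR$, and observe that triviality of the action amounts to $B(N)=-\tilde x\,\gamma q([\ol1]-[\ol0])$ lying in $zR$. All of this is correct and matches Section 8, where $z=u_x(y)+\alpha$ and $z^1-z=\bar b(e_4([\ol1]-[\ol0]))=-\tilde x\tilde y\,\omega(\xi)$ is computed from Lemma \ref{u_xcrossaction}.

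The gap is the last step, which you explicitly defer (``granting it''): you must show that the element $\tilde x\,\gamma q([\ol1]-[\ol0])$ of $\Gamma(\Z/\tilde x\Z)\cong\Z/2\tilde x\Z$ lies in the subgroup generated by $\tilde x\tilde y\,\gamma q([\ol1]-[\ol0])=\tilde y\cdot\bigl(\tilde x\,\gamma q([\ol1]-[\ol0])\bigr)$. Since $\tilde x\,\gamma q([\ol1]-[\ol0])$ has order $2$ for $\tilde x$ even, that subgroup is $\{0,\tilde x\,\gamma q([\ol1]-[\ol0])\}$ when $\tilde y$ is odd and $\{0\}$ when $\tilde y$ is even; so the containment you need holds precisely when $\tilde y$ is odd, whereas homological non--triviality only gives $\tilde y\neq0$. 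Your own (correct) computation $zR\cap\Gamma(\pi_2X^3)=\Z\,\tilde x\tilde y\,\gamma q([\ol1]-[\ol0])$ therefore shows that for $\tilde x$ and $\tilde y$ both even one gets $[u_x(N)]^1-[u_x(N)]=[-\tilde x\,\gamma q([\ol1]-[\ol0])]\neq0$ in the quotient, i.e.\ the step cannot simply be granted. You have correctly isolated the crux --- the paper's proof of Theorem \ref{trivialonpi_3detail} passes over this same point without comment after computing $\bar b(e_4([\ol1]-[\ol0]))$ --- but a complete argument must either handle even $\tilde y$ by some additional input or restrict the hypotheses; as written, your proposal establishes the theorem only when $\tilde x$ or $\tilde y$ is odd.
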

Theorem \ref{trivialonpi_3} is a corollary to Theorem \ref{trivialonpi_3detail}.

\section{Crossed Modules}
We recall the notions of pre-crossed module, Peiffer commutator, crossed module and nil(2)--module, which are ingredients of algebraic models of $2$-- and $3$--dimensional CW--complexes used in the proofs of our results, see \cite{Baues} and \cite{Brown}. In particular, Theorem \ref{1.8} provides an exact sequence in the algebraic context of a nil(2)--module equivalent to Whitehead's Certain Exact Sequence (\ref{ses1}). 

A \emph{pre--crossed module} is a homomorphism of groups, $\partial: M \rightarrow N$, together with an action of $N$ on $M$, such that, for $x \in M$ and $\alpha \in N$,
\[\partial(x^\alpha) = - \alpha + \partial x + \alpha.\]
Here the action is given by $(\alpha, x) \mapsto x^\alpha$ and we use additive notation for group operations even where the group fails to be abelian. The \emph{Peiffer commutator} of $x, y \in M$ in such a pre--crossed module is given by
\[\langle x, y \rangle = - x - y + x + y^{\partial x}.\]
The subgroup of $M$ generated by all iterated Peiffer commutators $\langle x_1, \ldots, x_n\rangle$ of length $n$ is denoted by $P_n(\partial)$ and a \emph{nil(n)--module} is a pre--crossed module $\partial: M \rightarrow N$ with $P_{n+1} (\partial)= 0$. A \emph{crossed module} is a nil(1)--module, that is, a pre--crossed module in which all Peiffer commutators vanish. We also consider nil(2)--modules, that is, pre--crossed modules for which $P_3(\partial) = 0$.

A morphism or map $(m,n): \partial \rightarrow \partial'$ in the category of pre--crossed modules is given by a commutative diagram
\[\xymatrix{
M \ar[r]^m \ar[d]_\partial & M' \ar[d]^{\partial'} \\
N \ar[r]_n& N'}\]
in the category of groups, where $m$ is $n$--equivariant, that is, $m(x^\alpha) = m(x)^{n(\alpha)}$, for $x \in M$ and $\alpha \in N$. The categories of crossed modules and nil(2)--modules are full subcategories of the category of pre--crossed modules. 

Note that $P_{n+1} (\partial) \subseteq \ker \partial$ for any pre--crossed module, $\partial: M \rightarrow N$. Thus we obtain the \emph{associated nil(n)--module} $r_n(\partial): M/P_{n+1}(\partial) \rightarrow N$, where the action on the quotient is determined by demanding that the quotient map $q: M \rightarrow M/P_{n+1}(\partial)$ be equivariant. For $n=1$ we write $\partial^{cr} = r_1(\partial): M^{cr} = M/P_{2}(\partial) \rightarrow N$ for the \emph{crossed module associated} to $\partial$.

Given a set, $Z$, let $\langle Z \rangle$ denote the free group generated by $Z$. Now take a group, $N$, and a group homomorphism, $f: F = \langle Z \rangle \rightarrow N$. Then the \emph{free $N$--group} generated by $Z$ is the free group, $\langle Z \times N \rangle$, generated by elements denoted by $x^\alpha = ((x,\alpha))$ with $x \in Z$ and $\alpha \in N$. These are elements in the product $Z\times N$ of sets. The action is determined by
\begin{equation}\label{freegrpgen}
((x,\alpha))^\beta = ((x, \alpha + \beta)).
\end{equation}
Define the group homomorphism $\partial_f: \langle Z \times N \rangle \rightarrow N$ by $((x, \alpha)) \mapsto -\alpha + f(x) + \alpha$, for generators $((x,\alpha)) \in Z \times N$, to obtain the pre--crossed module $\partial_f$ with associated nil(n)--module $r_n(\partial_f): \langle Z \times N \rangle/P_{n+1} (\partial_f) \rightarrow N$. Note that $r_n(\partial_f) \iota = f$, where $\iota = p \iota_F$ is the composition of the inclusion $\iota_F: F= \langle Z \rangle \rightarrow \langle Z  \times N \rangle$ and the projection $p: \langle Z \times N \rangle \rightarrow M = \langle Z \times N \rangle/P_{n+1}(\partial_f)$ onto the quotient. 

\begin{obs} The nil($n$)--module, $r_n(\partial_f): M=\langle Z \times N \rangle/P_{n+1} (\partial_f) \rightarrow N$, satisfies the following universal property: For every nil($n$)--module, $\partial': M' \rightarrow N'$, and every pair of group homomorphisms, $m_F: F = \langle Z \rangle \rightarrow M'$, and $n: N \rightarrow N'$ with $\partial' m_F = n f$, there is a unique group homomorphism, $m: M \rightarrow M'$, such that $m \iota = m_F$, and $(n,m): r_n(\partial_f) \rightarrow \partial'$ is a map of nil(n)--modules.
\[\xymatrix{
M \ar@{-->}[rr]^m \ar[dd]_{r_n(\partial_f)} && M' \ar[dd]^{\partial'} \\
& F \ar[ul]_{\iota} \ar[ur]_{m_F} \ar[dl]^f & \\
N \ar[rr]_n && N'
}\]
Thus $r_n(\partial_f)$ is called the \emph{free nil(n)--module with basis} $f$. A free nil($n$)--module is \emph{totally free} if $N$ is a free group.
\end{obs}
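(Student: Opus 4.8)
\emph{Proof proposal.} The plan is to prove the universal property in two stages: first at the level of pre--crossed modules, establishing that $\partial_f \colon \langle Z \times N \rangle \to N$ is free on $f$, and then descending to the $\mathrm{nil}(n)$--quotient $M = \langle Z \times N \rangle / P_{n+1}(\partial_f)$. Throughout, $\iota_F \colon F = \langle Z \rangle \to \langle Z \times N \rangle$ is the inclusion $x \mapsto x^0$, so that $\partial_f \iota_F = f$, and $\iota = p\,\iota_F$ with $p$ the quotient map.

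\emph{Step 1 (the pre--crossed module level).} Given a $\mathrm{nil}(n)$--module $\partial' \colon M' \to N'$ together with homomorphisms $m_F \colon F \to M'$ and $n \colon N \to N'$ satisfying $\partial' m_F = n f$, I would define $\widetilde m \colon \langle Z \times N \rangle \to M'$ on free generators by $\widetilde m(x^\alpha) = m_F(x)^{n(\alpha)}$; since $\langle Z \times N \rangle$ is free on the set $Z \times N$, this extends uniquely to a group homomorphism. One then checks: (i) $\widetilde m$ is $n$--equivariant, which is immediate from the action formula $(x^\alpha)^\beta = x^{\alpha + \beta}$ in (\ref{freegrpgen}) and the fact that $n$ and the action on $M'$ are homomorphic in the acting variable; (ii) $\partial' \widetilde m = n \partial_f$, a one--line computation on generators using $\partial'\bigl(m_F(x)^{n(\alpha)}\bigr) = -n(\alpha) + \partial' m_F(x) + n(\alpha)$ together with $\partial' m_F = nf$; and (iii) $\widetilde m \iota_F = m_F$, since $\iota_F(x) = x^0$ and $n(0) = 0$. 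Uniqueness of $\widetilde m$ is forced because every generator $x^\alpha = \iota_F(x)^\alpha$, so its image is determined by $\widetilde m \iota_F = m_F$ and equivariance.

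\emph{Step 2 (descent to the $\mathrm{nil}(n)$--quotient).} The key observation is that a morphism of pre--crossed modules carries Peiffer commutators to Peiffer commutators: directly from $\langle x, y \rangle = -x - y + x + y^{\partial x}$, equivariance of $\widetilde m$, and the square $\partial' \widetilde m = n \partial_f$, one gets $\widetilde m(\langle x, y \rangle) = \langle \widetilde m(x), \widetilde m(y) \rangle$, and hence, by induction on length, $\widetilde m$ sends every iterated Peiffer commutator of length $n+1$ in $\langle Z \times N \rangle$ to one of length $n+1$ in $M'$. Since $\partial'$ is a $\mathrm{nil}(n)$--module, $P_{n+1}(\partial') = 0$, so $\widetilde m\bigl(P_{n+1}(\partial_f)\bigr) = 0$ and $\widetilde m$ factors uniquely as $\widetilde m = m \circ p$ for a homomorphism $m \colon M \to M'$. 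Then $m \iota = m p \iota_F = \widetilde m \iota_F = m_F$; the $n$--equivariance of $m$ follows from that of $\widetilde m$ together with surjectivity of $p$ and the fact (noted in the text) that the action on $M$ is defined precisely so that $p$ is equivariant; and $\partial' m \circ p = \partial' \widetilde m = n \partial_f = n\, r_n(\partial_f) \circ p$ gives $\partial' m = n\, r_n(\partial_f)$ because $p$ is epic. Thus $(n, m)$ is a morphism of $\mathrm{nil}(n)$--modules as required. For uniqueness of $m$: any other such $m'$ yields $m' p$, a pre--crossed module morphism over $n$ with $(m'p)\iota_F = m_F$, so $m' p = \widetilde m = m p$ by Step 1, whence $m' = m$ since $p$ is surjective.

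\emph{Main obstacle.} The only non--formal ingredient is the preservation of Peiffer commutators under pre--crossed module morphisms (and the companion fact, already granted in the text, that $P_{n+1}(\partial_f)$ is $N$--invariant so that the quotient action is well defined); both come down to the same short equivariance computation with the Peiffer commutator formula. Once that is in hand, the statement is entirely formal.
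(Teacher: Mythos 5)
Your proposal is correct. The paper states this universal property as a remark without proof (it is essentially Baues's construction of free nil($n$)--modules), and your two--step argument --- first the universal property of the free pre--crossed module $\partial_f$ on the generating set $Z\times N$, then descent through the quotient by $P_{n+1}(\partial_f)$ using the fact that an $n$--equivariant morphism over a commuting square carries Peiffer commutators to Peiffer commutators --- is exactly the standard way to fill that gap; all the verifications you outline (equivariance on generators suffices since the action is by automorphisms, $\partial'\widetilde m=n\partial_f$ on generators, uniqueness forced by $x^\alpha=\iota_F(x)^\alpha$) go through as stated.
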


Given a path connected space $Y$ and a space $X$ obtained from $Y$ by attaching $2$--cells, let $Z_2$ be the set of $2$--cells in $X - Y$, and let $f: Z_2 \rightarrow \pi_1(Y)$ be the attaching map. J.H.C. Whitehead \cite{Wh1} showed that 
\begin{equation}\label{whitehead}
\partial: \pi_2(X,Y) \rightarrow \pi_1(Y)
\end{equation}
is a free crossed module with basis $f$.  Then $\ker \partial = \pi_2(X), \rm{coker}\, \partial = \pi_1(X)$ and $\partial$ is totally free if $Y$ is a one--point union of $1$--spheres. Whitehead also proved that the abelianisation of the group $\pi_2(X,Y)$ is the free $R$--module ${\langle Z_2 \rangle}_R$ generated by the set $Z_2$, where $R = \Z[\pi_1(X)]$ is the group ring \cite{Wh1}. 

Now take a totally free nil(2)--module $\partial: M \rightarrow N$ with associated crossed module $\partial^{cr}: M^{cr} \rightarrow N$. Let
\[\xymatrix{
M \ar@{->>}[r]^-q & M^{cr} \ar@{->>}[r]^-{h_2} & C = (M^{cr})^{ab}}\]
be the composition of projections. Put $K = h_2(\ker(\partial^{cr}))$. Further, let $\Gamma$ be \emph{Whitehead's quadratic functor} and $\tau: \Gamma(K) \rightarrowtail K \otimes K \subset C \otimes C$ the composition of the injective homomorphism induced by the quadratic map $K \rightarrow K \otimes K, k \mapsto k \otimes k$ and the inclusion. The \emph{Peiffer commutator map}, $w: C \otimes C \rightarrow M$, is given by $w(\{x\} \otimes \{y\}) = \langle x, y \rangle$, for $x, y \in M$ with $\{x\} = h_2(q(x)), \{y\} = h_2(q(y))$. Lemma (IV 1.6) and Theorem (IV 1.8) in \cite{Baues} imply
\begin{thm}\label{1.8}
Let $\partial: M \rightarrow N$ be a totally free nil(2)--module. Then the sequence
\[\xymatrix{
\Gamma(K)\,\,  \ar@{>->}[r]^-\tau & C \otimes C \ar[r]^-w & M \ar@{->>}[r]^-q & M^{cr}}\]
is exact and the image of $w$ is central in M.
\end{thm}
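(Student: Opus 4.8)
The plan is to derive the statement from the calculus of Peiffer commutators in nil(2)--modules, together with Whitehead's description of the abelianised free crossed module; in effect the theorem is a reformulation of Lemma (IV 1.6) and Theorem (IV 1.8) of \cite{Baues}, so part of the work is to match the objects $q$, $h_2$, $C$, $K$, $\tau$, $w$ defined above with the ones appearing there. Note first that, for a totally free nil(2)--module, the associated crossed module $\partial^{cr}$ is free with basis $f$, so by Whitehead \cite{Wh1} the group $C=(M^{cr})^{ab}$ is a free $R$--module, hence free abelian; thus its subgroup $K$ is free abelian and $\tau$, as defined above, is indeed injective.

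Next I would record the elementary identities valid in any nil(2)--module $\partial\colon M\rightarrow N$, that is, one with $P_3(\partial)=0$. Every Peiffer commutator lies in $\ker\partial$, and for $u\in\ker\partial$ one has $\langle u,v\rangle=[u,v]$; applying this with $u$ a Peiffer commutator gives $\langle\langle x,y\rangle,z\rangle=[\langle x,y\rangle,z]\in P_3(\partial)=0$, and it follows that $P_2(\partial)$ is central in $M$, hence abelian. Moreover $\langle-,-\rangle$ is biadditive with values in $P_2(\partial)$, satisfies $\langle x^\alpha,y^\alpha\rangle=\langle x,y\rangle^\alpha$ for $\alpha\in N$, and vanishes as soon as one of its arguments lies in $[M,M]\cdot P_2(\partial)$. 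Since $\ker(h_2q)=[M,M]\cdot P_2(\partial)$, these facts say precisely that $\langle x,y\rangle$ depends only on the images $h_2q(x),h_2q(y)\in C$ and is biadditive in them, which is exactly the data defining the homomorphism $w\colon C\otimes C\rightarrow M$, $\{x\}\otimes\{y\}\mapsto\langle x,y\rangle$; its image is central by the above. As $\ker q=P_2(\partial)$ is by construction generated by Peiffer commutators, $\operatorname{im}w=\ker q$, which is exactness at $M$.

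It remains to establish exactness at $C\otimes C$. The inclusion $\operatorname{im}\tau\subseteq\ker w$ is immediate: if $x\in\ker\partial$ then $\partial x=0$, so $\langle x,x\rangle=-x-x+x+x^{0}=0$; given a generator $k=h_2(\bar m)$ of $K$ with $\bar m\in\ker\partial^{cr}$, any lift $m\in M$ has $\partial m=\partial^{cr}q(m)=0$, whence $w(\tau\gamma(k))=w(k\otimes k)=\langle m,m\rangle=0$, and $w\tau=0$ follows since the $\gamma(k)$ generate $\Gamma(K)$. The reverse inclusion $\ker w\subseteq\operatorname{im}\tau$ is the essential point. Here total freeness is used: realising $\partial=r_2(\partial_f)$ as the free nil(2)--module on a basis $f\colon\langle Z\rangle\rightarrow N$ with $N$ free, Whitehead's theorem \cite{Wh1} identifies $C$ with the free $R$--module on $Z$; one then has to show that biadditivity together with the relations $\langle x,x\rangle=0$, $x\in\ker\partial$, account for \emph{all} relations among Peiffer commutators, i.e.\ that $w$ induces an isomorphism $(C\otimes C)/\operatorname{im}\tau\cong\ker q$.

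I expect this last step to be the main obstacle. A direct proof would construct an explicit retraction $\ker q\rightarrow(C\otimes C)/\operatorname{im}\tau$, controlling simultaneously the non-abelian multiplication in the free pre--crossed module $\langle Z\times N\rangle$ and the $N$--action on it; equivalently, one computes the second Peiffer quotient $P_2(\partial_f)/P_3(\partial_f)$ of a totally free pre--crossed module. This is exactly what the structure theory of (totally free) quadratic modules in \cite{Baues} supplies, and I would invoke Lemma (IV 1.6) and Theorem (IV 1.8) there rather than redo the argument; the residual task is then only the bookkeeping that the hypotheses of those results hold and that their maps $\tau$ and $w$ coincide with ours.
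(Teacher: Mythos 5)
Your proposal is correct and takes essentially the same route as the paper, which offers no independent argument but simply derives the statement from Lemma (IV 1.6) and Theorem (IV 1.8) of \cite{Baues} --- exactly where you send the one genuinely hard step, exactness at $C\otimes C$. The elementary Peiffer--commutator identities you verify along the way (centrality of $P_2(\partial)$, biadditivity, factorisation through $C\otimes C$, exactness at $M$, and $w\tau=0$) are all sound.
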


\section{Pseudo--Projective Spaces in Dimensions 2 and 3}\label{P_fandP_{f,x}}
Real projective $n$--space $\R\t{P}^n$ has a cell structure with precisely one cell in each dimension $\leq n$. More generally, a CW--complex,
\[X = S^1 \cup e^2 \cup \ldots \cup e^n,\]
with precisely one cell in each dimension $\leq n$, is called a \emph{pseudo--projective $n$--space}. For $n = 2$ we obtain \emph{pseudo--projective planes}, see \cite{Olum}. In this section we fix notation and consider pseudo--projective spaces in dimensions $2$ and $3$. In particular, we determine the totally free crossed module associated with a pseudo--projective plane and begin to investigate the totally free nil(2)--module associated with a pseudo--projective $3$--space.

The fundamental group of a pseudo--projective plane $P_f = S^1 \cup e^2$, with attaching map $f \in \pi_1(S^1) = \Z$, is the cyclic group $\pi_1= \pi_1(P_f) = \Z / f \Z$. We obtain $\pi_1 = \Z$ for $f = 0$, $\pi_1 = \{ 0\}$ for $f = 1$, and the bijection of sets
\[ \{ 0, 1, 2, \ldots, f-1 \} \rightarrow \pi_1 = \Z/f\Z,\quad k \mapsto \overline{k} = k + f\Z,\]
for $1 < f$. Addition in $\pi_1$ is given by
\[ \ol{k} + \ol{\ell} = 
\begin{cases}
 \ol{k+\ell} \quad & \text{for} \quad k+\ell < f;\\
\ol{k+\ell-f} \quad & \text{for} \quad k+\ell \geq f. \end{cases}\]
Denoting the integral group ring of the cyclic group $\pi_1$ by $R = \Z[\pi_1]$, an element $x \in R$ is a linear combination
\[ x = \sum_{\alpha \in \pi_1} x_{\alpha} [\alpha] =  \sum_{k = 0}^{f-1} x_{\ol{k}} [\ol{k}],\]
with $x_{\alpha}, x_{\ol{k}} \in \Z$. Note that $1_R=[\ol{0}]$ is the neutral element with respect to multiplication in $R$ and, for $x = \sum_{\alpha \in \pi_1} x_{\alpha} [\alpha] , y = \sum_{\beta \in \pi_1} y_{\beta} [\beta] $,
\[ xy = \sum_{\alpha, \beta \in \pi_1} \, x_{\alpha} y_{\beta} \, [\alpha+\beta] = \sum_{\ell=0}^{f-1} \big(\sum_{k = 0}^\ell x_{\ol{k}} \,y_{\ol{\ell-k}} + \sum_{k=\ell+1}^{f-1} x_{\ol{k}} \,y_{\ol{f+\ell-k}} \big) [\ol{\ell}].\]
The \emph{augmentation} $\varepsilon = \varepsilon_R: R \rightarrow \Z$ maps $\sum_{\alpha \in \pi_1} x_{\alpha} [\alpha]$ to $\sum_{\alpha \in \pi_1} x_{\alpha}$. The \emph{augmentation ideal}, $K$, is the kernel of $\varepsilon$. For a right $R$--module, $C$, we write the action of $\alpha \in \pi_1$ on $x \in C$ exponentially as $x^{\alpha} = x [\alpha]$. 

Given a pseudo--projective plane $P_f = S^1 \cup e^2$ with attaching map $f \in \pi_1(S^1) = \Z$, Whitehead's results on the free crossed module (\ref{whitehead}) imply that
\begin{equation}\label{P_ftotallyfreecross}
\partial: \pi_2(P_f, S^1) \rightarrow \pi_1(S^1)
\end{equation}
is a totally free crossed module with one generator, $e_i$, in dimensions $i = 1,2$, and basis $\tilde{f}: Z_2 = \{e_2\} \rightarrow \pi_1(S^1)$ given by $\tilde{f}(e_2) = f e_1$. Note that $\partial$ has cokernel $\pi_1(P_f) = \Z / f \Z = \pi_1$ and kernel $\pi_2(P_f)$.
\begin{lem}\label{ccmiso}
The diagram
\[\xymatrix{
\pi_2(P_f, S^1) \ar[r]^\partial \ar[d]_\cong & \pi_1(S^1) \ar@{=}[d] \\
R \ar[r]_{f \cdot \varepsilon_R} & \Z }\]
is an isomorphism of crossed modules, where $\varepsilon_R: R \rightarrow \mathbb Z$ is the augmentation.
\end{lem}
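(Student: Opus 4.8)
The plan is to exploit the structural description of $\partial:\pi_2(P_f,S^1)\to\pi_1(S^1)$ recorded in (\ref{P_ftotallyfreecross}): it is the totally free crossed module with basis $\tilde f: Z_2=\{e_2\}\to\pi_1(S^1)=\Z$, $\tilde f(e_2)=fe_1$. Write $M=\pi_2(P_f,S^1)$ and $N=\pi_1(S^1)=\langle e_1\rangle\cong\Z$. The key point, from which the lemma follows almost formally, is that $M$ is already an abelian group.

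To prove this, recall that $M$ is generated as a group by the elements $e_2^\alpha$, $\alpha\in N$, and that $\partial(e_2^\alpha)=-\alpha+\partial e_2+\alpha=\partial e_2=fe_1$ because $N$ is abelian; in particular $\operatorname{im}\partial=f\Z$, since $\operatorname{coker}\partial=\pi_1(P_f)=\Z/f\Z$. As $\partial$ is a crossed module, the Peiffer commutator $\langle e_2^\alpha,e_2^\alpha\rangle=-e_2^\alpha-e_2^\alpha+e_2^\alpha+(e_2^\alpha)^{\partial e_2^\alpha}=-e_2^\alpha+(e_2^\alpha)^{fe_1}$ vanishes, whence $(e_2^\alpha)^{fe_1}=e_2^\alpha$ for every $\alpha$. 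Thus $fe_1\in N$, and hence the whole subgroup $f\Z=\operatorname{im}\partial$, fixes every generator of $M$ and therefore acts trivially on all of $M$. Consequently, for arbitrary $x,y\in M$ we have $\partial x\in f\Z$, so $y^{\partial x}=y$, and the Peiffer identity $0=\langle x,y\rangle=-x-y+x+y^{\partial x}=-x-y+x+y$ shows that $x$ and $y$ commute. Hence $M$ is abelian.

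It then remains to identify the abelian crossed module $\partial: M\to N$ with $f\varepsilon_R: R\to\Z$. By Whitehead's computation of the abelianisation \cite{Wh1}, $M=M^{ab}$ is the free $R$--module on $Z_2=\{e_2\}$; that is, $M\cong R$ with $e_2\leftrightarrow 1_R$, and under this identification $e_2^\alpha$ corresponds to $[\ol{\alpha}]$, so that the $N$--action on $M$ becomes the action on $R$ factoring through $N\twoheadrightarrow\pi_1$ and multiplication in $R$, which is exactly the $\pi_1$--module structure on the source of $f\varepsilon_R$. Finally $\partial(e_2^\alpha)=\partial e_2=fe_1$ for every $\alpha$, so, identifying $e_1\leftrightarrow 1\in\Z$, the homomorphism $\partial$ sends each $[\ol{\alpha}]$ to $f$ and hence, by additivity, $\sum_k c_k[\ol{k}]\mapsto f\sum_k c_k=f\varepsilon_R\big(\sum_k c_k[\ol{k}]\big)$. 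Therefore the isomorphism $M\xrightarrow{\cong}R$ together with $\operatorname{id}_\Z$ is an isomorphism of crossed modules, which is the assertion of the lemma.

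The genuine obstacle is the abelianness of $M$: a priori $\pi_2(P_f,S^1)$ is a non-abelian free crossed module, and the argument above works only because the single $2$--generator $e_2$ is sent into the abelian group $\pi_1(S^1)=\Z$, which forces $\partial$ to be constant on the $N$--orbit of generators and $\operatorname{im}\partial$ to act trivially, collapsing all Peiffer commutators and with them all ordinary commutators. After that, the identification is routine bookkeeping together with the cited description of $M^{ab}$.
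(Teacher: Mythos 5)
Your proof is correct and follows essentially the same route as the paper: reduce, via Whitehead's identification of the abelianisation with the free $R$--module on $e_2$, to showing that $\pi_2(P_f,S^1)$ is abelian, and deduce this from the vanishing of Peiffer commutators together with the fact that $\partial$ takes the constant value $f$ on the generators $e_2^\alpha$ because $\pi_1(S^1)$ is abelian. The only (harmless) difference is in packaging: the paper uses the single identity $(e^n,e^m)=\langle e^n,e^m\rangle-\langle e^m,e^m\rangle$ for generators, whereas you first extract from $\langle e_2^\alpha,e_2^\alpha\rangle=0$ that $\operatorname{im}\partial=f\Z$ acts trivially on $M$ and then conclude $\langle x,y\rangle=(x,y)$ for arbitrary $x,y$; the two computations are interchangeable.
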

\begin{proof}
By Whitehead's results \cite{Wh1} on the free crossed module (\ref{whitehead}), it is enough to show that $\pi_2(P_f, S^1)$ is abelian. As $\partial$ is a totally free crossed module with basis $\tilde{f}$, $\pi_2(P_f, S^1)$ is generated by elements $e^n = ((e_2, n))$, see (\ref{freegrpgen}). Note that we obtain $e^n$ by the action of $n \in \Z$ on $\iota(e_2) = ((e_2,0)) = e^0$ and $\partial(e^n) = -n + \partial e + n = \partial e = f$ as $\pi_1(S^1) = \Z$ is abelian. We obtain
\begin{eqnarray*}
\langle e^n, e^m \rangle - \langle e^m, e^m \rangle
& = & -e^n - e^m + e^n + (e^m)^{\partial(e^n)} - (- e^m - e^m + e^m + (e^m)^{\partial(e^m)}) \\
%& = & -e^n - e^m + e^n + (e^m)^{f} - (-e^m + (e^m)^{f}) \\
& = & -e^n - e^m + e^n + (e^m)^{f} - (e^m)^{f} + e^m \\
%& = & -e^n - e^m + e^n + e^m \\
& = & (e^n,e^m),
\end{eqnarray*}
where $(a,b) = -a - b + a + b$ denotes the commutator of $a$ and $b$. Thus commutators of generators are sums of Peiffer commutators which are trivial in a crossed module.
\end{proof}

With the notation of Theorem \ref{1.8} and $M = \pi_2(P_f, S^1)$, Lemma \ref{ccmiso} shows that $M = M^{cr} = (M^{cr})^{ab} = R$ and that $\pi_2(P_f) = \ker \partial = \ker \partial^{cr} = \ker (f \cdot \varepsilon) = K$ is the augmentation ideal of $R$, for $f \neq 0$. Thus the homotopy type of a pseudo--projective 3--space,
\begin{equation}\label{P_{f,x}}
P_{f,x} = S^1 \cup e^2 \cup e^3,
\end{equation}
is determined by the pair $(f,x)$ of attaching maps, $f \in \pi_1(S^1) = \Z$ of the 2-cell $e^2$, and $x \in \pi_2(P_f) = K \subseteq R$ of the 3--cell $e^3$. We obtain the totally free nil(2)--module 
\begin{equation}\label{tfnil2}
\xymatrix{
M = \pi_2(P_{f,x}, S^1) \ar[r]^-\partial  & N = \pi_1(S^1).}
\end{equation}
In the next section we use Theorem \ref{1.8} to describe the group structure of $\pi_2(P_{f,x}, S^1)$, as well as the action of $N$ on $\pi_2(P_{f,x}, S^1)$. The formul{\ae} we derive are required to compute the homotopy group $\pi_3(P_{f,x})$ as a $\pi_1$--module.

\section{Computations in nil(2)--Modules}\label{nil2comp}
In this Section we consider totally free nil(2)--modules, $\partial: M \rightarrow N$, generated by one element, $e_i$, in dimensions $i=1, 2$, with basis $\tilde f: \{e_2\} \rightarrow N \cong \Z$. Then $\pi_1 = \text{coker} \partial = \Z / f \Z$ and, with $R = \Z [\pi_1]$, we obtain $(M^{cr})^{ab} =  C = R$. Thus Theorem \ref{1.8} yields the short exact sequence
\begin{equation}\label{ses3}
\xymatrix{
(R \otimes R) / \Gamma(K) \, \, \ar@{>->}[r]^-w & M \ar@{->>}[r]^q & R}
\end{equation}
with the image of $(R \otimes R) / \Gamma(K)$ central in $M$. This allows us to compute the group structure of $M$, as well as the action of $N = \Z$ on $M$, by computing the cross--effects of a set--theoretic splitting $s$ of (\ref{ses3}) with respect to addition and the action of $N$, even though here $M$ need not be commutative. 

The element $x \otimes y \in R \otimes R$ represents an equivalence class in $R \otimes R / \Gamma(K)$, also denoted by $x \otimes y$, so that $w(x\otimes y) = \langle \hat{x}, \hat{y}\rangle$ is the Peiffer commutator for $x, y \in R$, with $x = q(\hat{x})$ and $y = q(\hat{y})$. As a group, $M$ is generated by elements $e^n = ((e_2, n))$, in particular, $e = e^0 = ((e_2, 0))$, see (\ref{freegrpgen}). We write
\[ k e^n = 
\begin{cases} e^n + \ldots + e^n  \quad (k \, \text{summands}) & \quad \text{for}\, k>0, \\
0 & \quad \text{for}\, k=0 \, \text{and} \\
-e^n - \ldots - e^n   \quad (-k \, \text{summands}) & \quad \text{for}\, k<0, \end{cases}\]
and define the set-theoretic splitting $s$ of (\ref{ses3}) by
\[s: R \longrightarrow M, \quad \sum_{k = 0}^{f-1} x_{\ol{k}} [\ol{k}] \longmapsto x_{\ol{0}} e^0 + x_{\ol{1}} e^1 + \ldots + x_{\ol{f-1}} e^{f-1}.\]
Then every $m \in M$ can be expressed uniquely as a sum $m = s(x) + w(m^\otimes)$ with $x \in R$ and $m^\otimes \in (R \otimes R) / \Gamma(K)$. The following formul{\ae} for the cross--effects of $s$ with respect to addition and the action provide a complete description of the nil(2)--module $M$ in terms of $R$ and $R \otimes R/ \Gamma(K)$. 

Given a function, $f: G \rightarrow H$, between groups, $G$ and $H$, we write
\begin{equation}\label{f(x|y)}
f(x|y) = f(x+y) - (f(x) + f(y)), \quad \text{for $x, y \in G$}.
\end{equation}
\begin{lem}\label{scross1}
Take $x= \sum_{m=0}^{f-1} x_{\ol{m}}, y=\sum_{n=0}^{f-1} y_{\ol{n}} \,[\ol{n}] \in R$. Then 
\[s(x|y) =  w (\nabla(x,y)),\]
where
\[\nabla(x,y) = \sum_{m=1}^{f-1} \sum_{n=0}^{m-1} x_{\ol{m}} \, y_{\ol{n}} w( [\ol{n}] \otimes [\ol{m}] - [\ol{m}] \otimes [\ol{m}] ).\]
Thus $\nabla(x,y)$ is linear in $x$ and $y$, yielding a homomorphism $\nabla: R \otimes R \rightarrow R \otimes R$.
\end{lem}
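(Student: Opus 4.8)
The plan is to compute the cross-effect $s(x|y) = s(x+y) - (s(x) + s(y))$ directly from the defining formula for $s$, using that the image of $w$ is central in $M$ (Theorem \ref{1.8}) so that all the "error terms'' produced by reordering sums collect into the central subgroup, where they can be rewritten via $w$ and identified with an element of $(R\otimes R)/\Gamma(K)$.

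First I would set up notation: write $x = \sum_{m} x_{\ol m}[\ol m]$, $y = \sum_n y_{\ol n}[\ol n]$, so that $s(x) = \sum_m x_{\ol m} e^m$ and $s(y) = \sum_n y_{\ol n} e^n$, while $s(x+y) = \sum_m (x_{\ol m}+y_{\ol m}) e^m$. The point is that $s(x+y)$ groups the $e^m$-terms together by index, whereas $s(x)+s(y)$ lists all the $x$-terms first and then all the $y$-terms. To pass from one to the other I would commute each block $y_{\ol n} e^n$ leftward past the blocks $x_{\ol m} e^m$ with $m > n$. Each such transposition of an $e^n$ past an $e^m$ produces a commutator $(e^n, e^m) = -e^n - e^m + e^n + e^m$, and by the definition of the Peiffer commutator together with the computation already carried out in the proof of Lemma \ref{ccmiso} (where $(e^n,e^m) = \langle e^n,e^m\rangle - \langle e^m,e^m\rangle$), this commutator equals $w([\ol n]\otimes[\ol m] - [\ol m]\otimes[\ol m])$ in $M$. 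Since these correction terms are central, they can be pulled out to the front and accumulated without interfering with further reorderings; counting multiplicities, moving the coefficient-$y_{\ol n}$ copies of $e^n$ past the coefficient-$x_{\ol m}$ copies of $e^m$ contributes $x_{\ol m} y_{\ol n}$ times the generator $w([\ol n]\otimes[\ol m] - [\ol m]\otimes[\ol m])$, for each pair $n < m$. Summing over $0 \le n < m \le f-1$ yields exactly $s(x|y) = w(\nabla(x,y))$ with $\nabla(x,y)$ as stated. Bilinearity of $\nabla$ in $x$ and $y$ is then immediate from the formula, since each summand is linear in the coefficient $x_{\ol m}$ and in $y_{\ol n}$; extending $\Z$-bilinearly over the basis $\{[\ol k]\}$ gives the homomorphism $\nabla\colon R\otimes R \to R\otimes R$.

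The main obstacle I anticipate is bookkeeping rather than conceptual: one must be careful that reordering a sum of many non-commuting terms into the canonical form of $s(x+y)$ really does produce precisely the claimed collection of commutators with the right multiplicities, and that no extra terms appear from commuting two $e^n$'s with the same index (they commute, contributing nothing) or from the centrality argument (central terms commute with everything, so the order in which they are extracted is irrelevant). A clean way to organize this is by induction on the number of nonzero $y$-coefficients, or equivalently to first prove the identity when $y = y_{\ol n}[\ol n]$ is supported on a single index and $x$ is arbitrary, and then extend additively in $y$ using that $s(x \,|\, y + y') = s(x\,|\,y) + s(x\,|\,y') + \big(s(x+y)\,|\,y'\big) - \big(s(y)\,|\,y'\big)$ — or, more simply, just observe that since all correction terms are central the total correction is additive in the obvious way. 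I expect the single-index case to reduce to a short direct manipulation: $s(x) + y_{\ol n} e^n$ versus the canonical reordering, with each of the $x_{\ol m}$ copies of $e^m$ for $m > n$ needing to be commuted past each of the $y_{\ol n}$ copies of $e^n$.
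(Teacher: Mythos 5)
Your proposal is correct and takes essentially the same route as the paper's proof: reorder $s(x)+s(y)$ into $s(x+y)$, collect the resulting commutators $(e^n,e^m)$, which are central by Theorem \ref{1.8}, identify each with $\langle e^n,e^m\rangle-\langle e^m,e^m\rangle=w([\ol{n}]\otimes[\ol{m}]-[\ol{m}]\otimes[\ol{m}])$ via the computation from Lemma \ref{ccmiso}, and use centrality to obtain the multiplicities $x_{\ol{m}}y_{\ol{n}}$. The paper carries this out in a single direct computation (handling the coefficient count by induction on $k$ and $\ell$ in $(ke^m,\ell e^n)=k\ell(e^m,e^n)$), but the content is identical to yours.
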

\begin{proof}
First note that, by definition, $\nabla(k[\ol{m}], \ell[\ol{n}]) = 0$ unless $m>n$. To deal with the latter case, recall that commutators are central in $M$ and use induction, first on $k$, then on $\ell$, to show that
\begin{eqnarray*}
(k e^m, \ell e^n) = k\ell (e^m, e^n),
\end{eqnarray*}
for $k, \ell > 0$. To show equality for negative $k$ or $\ell$, replace $e^m$ or $e^n$ by $-e^m$ and $-e^n$, respectively. Furthermore, note that the equality
\begin{equation}
(e^n, e^m) =  -e^n - e^m + e^n + e^m = \langle e^n, e^m \rangle - \langle e^m, e^m \rangle
\end{equation}
for commutators of generators of totally free cyclic crossed modules derived in the proof of Lemma \ref{ccmiso} holds in any totally free nil(n)--module generated by one element in each dimension. Taking $x = \sum_{m=0}^{f-1} x_{\ol{m}} \, [\ol{m}]$ and $y = \sum_{n=0}^{f-1} y_{\ol{n}} \,[\ol{n}]$, we obtain
\begin{eqnarray*}
&& s(x+y) \\
&& \quad = (x_{\ol{0}} + y_{\ol{0}}) \, e + \ldots + (x_{\ol{m}} + y_{\ol{m}}) \, e^m + \ldots + (x_{\ol{f-1}}+ y_{\ol{f-1}}) \, e^{f-1} \\
&& \quad = (x_{\ol{i}} \, e + \ldots + x_{\ol{f-1}} \, e^{f-1}) +  ( y_{\ol0} \, e + \ldots + y_{\ol{f-1}} \, e^{f-1}) + \sum_{m=1}^{f-1} \sum_{n=0}^{m-1} x_{\ol{m}} \, y_{\ol{n}} \, (e^{n}, e^m)\\
&& \quad = s(x) + s(y) + \sum_{m=1}^{f-1} \sum_{n=0}^{m-1} x_{\ol{m}} \, y_{\ol{n}} \, \big( \langle e^n, e^m \rangle - \langle e^m, e^m \rangle \big)\\
&& \quad = s(x) + s(y) + \sum_{m=1}^{f-1} \sum_{n=0}^{m-1} x_{\ol{m}} \, y_{\ol{n}} w( [\ol{n}] \otimes [\ol{m}] - [\ol{m}] \otimes [\ol{m}] ).
\end{eqnarray*}
\end{proof}
\begin{cor}\label{scrosscor}
Take $x \in R$ and $r \in \Z$. Then 
\[ s(rx) = rs(x) + \binom{r}{2} w(\nabla(x,x)), \quad \text{where} \quad \binom{r}{2} = \frac{r(r-1)}{2}. \]
\end{cor}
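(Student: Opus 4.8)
The plan is to prove the formula by induction on $r$, using the bilinear cross-effect function $\nabla$ supplied by Lemma~\ref{scross1} together with the fact from Theorem~\ref{1.8} that the image of $w$ is central in $M$, so that summands of the form $w(\cdots)$ may be moved past other terms without penalty. First I would record the three ingredients to be used repeatedly: (i) $s(x\mid y) = s(x+y) - (s(x)+s(y)) = w(\nabla(x,y))$, with $\nabla\colon R\otimes R \to R\otimes R$ bilinear; (ii) $w$ is a group homomorphism, hence $w(r\,\omega) = r\,w(\omega)$ for $r\in\Z$ and $\omega\in (R\otimes R)/\Gamma(K)$; and (iii) $\operatorname{im} w$ is central in $M$.

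For $r\geq 0$ I would induct on $r$. The cases $r=0,1$ are immediate since $\binom{0}{2} = \binom{1}{2} = 0$. For the inductive step, write $s((r+1)x) = s(rx + x) = s(rx) + s(x) + s(rx\mid x)$. By (i) and bilinearity of $\nabla$ in the first variable, $s(rx\mid x) = w(\nabla(rx,x)) = w\big(r\,\nabla(x,x)\big) = r\,w(\nabla(x,x))$ using (ii). Inserting the inductive hypothesis $s(rx) = r\,s(x) + \binom{r}{2}w(\nabla(x,x))$ and collecting the central terms via (iii) gives $s((r+1)x) = (r+1)s(x) + \big(\binom{r}{2}+r\big)w(\nabla(x,x))$, and $\binom{r}{2}+r = \binom{r+1}{2}$.

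For $r<0$, write $r=-t$ with $t>0$. Applying $s$ to $0 = x + (-x)$ and using (i) gives $0 = s(x) + s(-x) + w(\nabla(x,-x))$; since $\nabla(x,-x) = -\nabla(x,x)$, this rearranges by (ii) and (iii) to $s(-x) = -s(x) + w(\nabla(x,x))$, which is exactly the claim for $r=-1$ (note $\binom{-1}{2}=1$). I would then apply the already-established non-negative case to the element $-x$: $s((-t)x) = s\big(t(-x)\big) = t\,s(-x) + \binom{t}{2}w(\nabla(-x,-x))$. Since $\nabla$ is bilinear, $\nabla(-x,-x) = \nabla(x,x)$; substituting $s(-x) = -s(x) + w(\nabla(x,x))$ and using (ii), (iii) yields $s(-tx) = -t\,s(x) + \big(t + \binom{t}{2}\big)w(\nabla(x,x))$. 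Finally $t + \binom{t}{2} = \binom{t+1}{2} = \binom{-t}{2}$, completing the argument.

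The computation is essentially routine; the one point demanding care is that $M$ need not be abelian, which is exactly why one invokes the centrality of $\operatorname{im} w$ each time a term $w(\cdots)$ is extracted and combined with the others, and the mildly error-prone binomial bookkeeping $\binom{r}{2}+r = \binom{r+1}{2}$ together with $\binom{-t}{2} = \binom{t+1}{2}$ in the negative range.
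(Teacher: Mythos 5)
Your proof is correct and is essentially the intended derivation: the paper states this as an immediate corollary of Lemma~\ref{scross1}, obtained by iterating the cross--effect formula $s(x+y)=s(x)+s(y)+w(\nabla(x,y))$ and using bilinearity of $\nabla$ together with the centrality of $\operatorname{im} w$. Your explicit treatment of the negative case via $s(-x)=-s(x)+w(\nabla(x,x))$ and the check $\binom{-t}{2}=\binom{t+1}{2}$ is a welcome piece of care that the paper leaves implicit.
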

As $N = \Z$ is cyclic, the action of $N$ on $M$ is determined by the action of the generator, $1 \in \Z$. The  formula for general $k \in Z$ provided in the next lemma is required for the definition of the set--theoretic splitting $u_x$ of (\ref{ses1}) and the explicit computation of $A$ and $B$ in Theorem \ref{AandBforex}.
\begin{lem}\label{scross2}
Take $x = \sum_{n=0}^{f-1} x_{\ol{n}} [\ol{n}] \in R$ and $\ol{k} \in \pi_1$. Write $R = \Z[\ol{0}, \ldots, \ol{f-1}] = R_k \times \widehat{R}_k$, where $R_k = \Z[\ol{0}, \ldots, \ol{f-k-1}] $ and $\widehat{R}_k = \Z[\ol{f-k}, \ldots, \ol{f-1}] $. Then 
\[\big(s(x)\big)^k = s(x^{\ol{k}}) + w(\ol{\nabla}_k(a,b)),\] 
where $x = (a,b)$ and
\[\ol{\nabla}_k: R_k \times \widehat{R}_k \rightarrow R \otimes R, \quad
(a,b) \mapsto Q_k(a,b) + L_k (b)\]
with
\begin{eqnarray*}
Q_k(a,b) & = &  \sum_{p=0}^{f-\ell-1} \sum_{q=0}^{\ell-1} \, x_{\ol{p}} \, x_{\ol{q+f-\ell}}\, ( [\ol{p + \ell}] \otimes [\ol{q}] - [\ol{q}] \otimes [\ol{q}])   \\
L_k(b) & = & \sum_{q=0}^{\ell-1} \, x_{\ol{q+f-\ell}} \, [\ol{q}] \otimes [\ol{q}].
\end{eqnarray*}
Thus $Q_k$ is linear in $a$ and $b$ and $L_k$ is linear in $b$.
\end{lem}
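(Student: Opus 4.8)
The plan is to compute the $k$--fold action $\bigl(s(x)\bigr)^k$ directly from the definitions, tracking how the generators $e^n$ get shifted to $e^{n+k}$ and, when the shift overflows past $f-1$, how they wrap around to $e^{n+k-f}$ at the cost of Peiffer commutators. First I would recall from (\ref{freegrpgen}) that $(e^n)^k = ((e_2, n+k))$, which equals $e^{n+k}$ when $n+k < f$ and $e^{n+k-f}$ when $n+k \geq f$; crucially $\partial(e^j) = f$ is independent of $j$, so the subtlety is purely combinatorial rather than arising from the action on the quotient. Writing $x = (a,b)$ with $a = \sum_{p=0}^{f-k-1} x_{\ol p}[\ol p] \in R_k$ the ``non--overflowing'' part and $b = \sum_{q=0}^{k-1} x_{\ol{q+f-k}}[\ol{q+f-k}] \in \widehat R_k$ the ``overflowing'' part, the action sends $s(a)$ summandwise to a sum of $e^{p+k}$'s with exponents still in range, and sends the $b$--part to $e^{q}$'s, with $q+f-k+k-f = q$; this already explains the exponent $\ol k$ in $x^{\ol k}$ on the right, since $\ol{p} + \ol k = \ol{p+k}$ and $\ol{q+f-k} + \ol k = \ol{q}$.

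The heart of the computation is then to re--order the resulting word in the $e^j$'s into the standard form $s(x^{\ol k}) + (\text{commutator terms})$. Since commutators are central in $M$ (by Theorem \ref{1.8}, the image of $w$ is central), each transposition needed to bring $e^{p+k}$ past $e^{q}$ contributes a commutator $(e^{q}, e^{p+k})$, and by the same induction as in Lemma \ref{scross1} a block of $x_{\ol p}$ copies past a block of $x_{\ol{q+f-k}}$ copies contributes $x_{\ol p}\,x_{\ol{q+f-k}}(e^{q}, e^{p+k})$. Using the identity $(e^n, e^m) = \langle e^n, e^m\rangle - \langle e^m, e^m\rangle = w([\ol n]\otimes[\ol m] - [\ol m]\otimes[\ol m])$ established in the proof of Lemma \ref{ccmiso} (valid in any totally free nil(2)--module on one generator per dimension), these become exactly the terms $x_{\ol p}\,x_{\ol{q+f-k}}\bigl([\ol{p+\ell}]\otimes[\ol q] - [\ol q]\otimes[\ol q]\bigr)$ appearing in $Q_k(a,b)$ (I read $\ell = k$ here, matching the paper's indexing). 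The extra term $L_k(b) = \sum_{q=0}^{k-1} x_{\ol{q+f-k}}[\ol q]\otimes[\ol q]$ must come from the fact that moving a single generator $e^{q+f-k}$ under the action to $e^{q}$, and then collecting multiple copies of it, is not literally ``free'': expanding $(e^{q+f-k})^k$ and comparing $r e^{j}$ for the overflowed generators uses Corollary \ref{scrosscor}, whose correction term $\binom{r}{2}w(\nabla(x,x))$ contributes precisely the diagonal tensors $[\ol q]\otimes[\ol q]$ — I would need to check the binomial bookkeeping so that these assemble into $L_k(b)$ with coefficient $1$ rather than $\binom{r}{2}$, presumably because the splitting $s$ itself is defined by first summing the $e^j$'s, so the $\binom{r}{2}$ is already absorbed into $s(x^{\ol k})$.

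Concretely the steps are: (1) apply the action to each summand $x_{\ol n} e^n$ of $s(x)$, splitting the index range at $n = f-k$; (2) in the overflow part, use that the wrapped generators $e^{q}$ for $q < k$ already appear as initial summands of $s(x^{\ol k})$ and reconcile the two groupings via Corollary \ref{scrosscor}, yielding $L_k(b)$; (3) re--sort the full word into the standard order of $s(x^{\ol k})$, each inversion producing a commutator, and collect them using centrality and the block--induction $(re^m, \ell e^n) = r\ell(e^m,e^n)$; (4) rewrite every commutator via $w([\ol n]\otimes[\ol m] - [\ol m]\otimes[\ol m])$ to land inside $(R\otimes R)/\Gamma(K)$ and identify the total with $Q_k(a,b)$; (5) observe linearity of $Q_k$ in $a,b$ and of $L_k$ in $b$ from the formulas. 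The main obstacle I anticipate is step (2)/(3) bookkeeping: getting the index substitution $p \mapsto p+\ell$ versus $q$ exactly right in $Q_k$, and in particular pinning down the origin and the coefficient of the $L_k(b)$ term, since a naive count would give $\binom{x_{\ol{q+f-k}}}{2}$ and one has to see why the definition of $s$ and the reordering conspire to leave only a linear contribution. Everything else is a routine, if lengthy, commutator calculation in a nil(2)--module that parallels Lemma \ref{scross1} closely.
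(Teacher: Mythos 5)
Your decomposition into an overflowing and a non--overflowing part, and your treatment of the $Q_k$ term (reordering the word, centrality of commutators, the identity $(e^n,e^m)=\langle e^n,e^m\rangle-\langle e^m,e^m\rangle = w([\ol{n}]\otimes[\ol{m}]-[\ol{m}]\otimes[\ol{m}])$, and the block induction $(re^m,\ell e^n)=r\ell(e^m,e^n)$) match the paper's proof. But there is a genuine gap at the one place you flag as uncertain: the origin of $L_k(b)$. Your opening assertion that $(e^n)^k=((e_2,n+k))$ ``equals $e^{n+k-f}$ when $n+k\geq f$'' is false: the generators $((e_2,j))$ are indexed by all $j\in N=\Z$, so $e^{j+f}$ and $e^j$ are distinct free--group generators. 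The relation between them in the nil(2)--module is exactly
\[
e^{j+f}=(e^j)^{\partial(e)}=e^j+(e^j,e)+\langle e,e^j\rangle=e^j+\langle e^j,e^j\rangle=e^j+w([\ol{j}]\otimes[\ol{j}]),
\]
obtained from the definition of the Peiffer commutator together with centrality of the image of $w$. This single identity is both the justification for ``wrapping around'' and the entire source of $L_k(b)$: each overflowing generator $e^{q+f-k}$ goes to $e^{q+f}=e^q+w([\ol{q}]\otimes[\ol{q}])$, and since the $w$--terms are central the coefficient picked up is linear in $x_{\ol{q+f-k}}$, giving $L_k(b)$ on the nose.

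By contrast, the mechanism you propose for $L_k(b)$ --- the correction term $\binom{r}{2}w(\nabla(x,x))$ of Corollary \ref{scrosscor} applied to the collected multiples $re^j$ --- produces nothing: for a single basis element $[\ol{j}]$ one has $\nabla([\ol{j}],[\ol{j}])=0$ (the double sum in Lemma \ref{scross1} is empty on the diagonal), and indeed $s(r[\ol{j}])=re^j$ exactly, with no binomial correction. So that route cannot account for the diagonal tensors, and without the identity $e^{j+f}=e^j+\langle e^j,e^j\rangle$ the proof does not close. (A minor point in your favour: your reading $\ell=k$ in the displayed formulas for $Q_k$ and $L_k$ is the intended one.)
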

\begin{proof}
For $\ol{j} \in \pi_1$ and $p \in \mathbb Z$,
\begin{eqnarray*}
e^{j+f} 
& = & (e^j)^{\partial(e)} \\
& = & e^j + (e^j, e) + \langle e, e^j \rangle \\
& = & e^j - ( \langle e, e^j \rangle - \langle e^j, e^j \rangle) +  \langle e, e^j \rangle \\
& = & e^j + \langle e^j, e^j \rangle. \\
\end{eqnarray*}
Thus, for $\ol{n}, \ol{k} \in \pi_1$, with $\ol{n} + \ol{k} = \ol{j}$,
\begin{eqnarray*}
\big(s([\ol{n}])\big)^k
& = & \begin{cases} 
e^j, & \text{for $0 \leq n < f-k$}, \\
e^j + \langle e^j, e^j \rangle, & \text{for $f-k \leq n <f$}
\end{cases}\\
& = & \begin{cases} 
s([\ol{n}]^{\ol{k}}), & \text{for $0 \leq n < f-k$}, \\
s([\ol{n}]^{\ol{k}}) + w([\ol{j}] \otimes [\ol{j}]), & \text{for $f-k \leq n <f$.}
\end{cases}
\end{eqnarray*}
Hence, for $x = \sum_{p=0}^{f-1} x_{\ol{p}} \, [\ol{p}]$,
\begin{eqnarray*}
&&\big(s(x)\big)^k \\
&& =  x_{\ol{0}} \, s([\ol{0}])^k + x_{\ol{1}} \, s([\ol{1}])^k + \ldots + x_{\ol{f-1}} \, s([\ol{f-1}])^k \\
&& =  x_{\ol{0}} \, s([\ol{0}]^{\ol{k}}) +  x_{\ol{1}} \, s([\ol{1}]^{\ol{k}}) + \ldots + x_{\ol{f-1}} \, s([\ol{f-1}]^{\ol{k}})  + \sum_{n=f-k}^{f-1} x_{\ol{n}} \, w([\ol{n+k-f}] \otimes [\ol{n+k-f}]) \\
&& = x_{\ol{f-k}} \, s([\ol{f-k}]^{\ol{k}} ) + \ldots + x_{\ol{f-1}} \, s([\ol{f-1}]^{\ol{k}}) + x_{\ol{0}} \, s([\ol{0}]^{\ol{k}}) + \ldots +  x_{\ol{f-k-1}} \, s([\ol{f-k-1}]^{\ol{k}}) \\
&& \quad + \sum_{p=0}^{f-k-1} \sum_{n=f-k}^{f-1} \, (x_{\ol{p}} s([\ol{p} + \ol{k}]), x_{\ol{n}} s([\ol{n} + \ol{k}])) + \sum_{q=0}^{k-1} x_{\ol{q+f-k}} \, w([\ol{q}] \otimes [\ol{q}]) \\
&& = s(x^{\ol{k}}) + \sum_{p=0}^{f-k-1} \sum_{q=0}^{k-1} \, x_{\ol{p}} \, x_{\ol{q+f-k}}\, w( [\ol{p + k}] \otimes [\ol{q}] - [\ol{q}] \otimes [\ol{q}]) + \sum_{q=0}^{k-1}x_{\ol{q+f-k}} \, w([\ol{q}] \otimes [\ol{q}]).
\end{eqnarray*}
\end{proof}
\begin{obs}
We use the final results of this section to define and establish the properties of the set--theoretic splitting $u_x$ of (\ref{ses1}). The next result shows how the cross--effects interact with multiplication in $R$.
\end{obs}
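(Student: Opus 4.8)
To establish the next result -- which records how the set--theoretic splitting $s$, equivalently the cross--effect functions $\nabla$ and $\ol{\nabla}_k$, interacts with multiplication in $R$ -- the plan is as follows. I expect the statement to take the form of a formula for $s(xy)$, $x,y\in R$, with principal term $\sum_{n=0}^{f-1} y_{\ol{n}}\bigl(s(x)\bigr)^{n}$ and an explicit central correction lying in the image of $w$, assembled from $\nabla$ and $\ol{\nabla}_k$. The organising observation is the decomposition $xy=\sum_{n=0}^{f-1} y_{\ol{n}}\,x^{\ol{n}}$ of a product as an integer--weighted sum of $\pi_1$--translates of $x$, which reduces the computation of $s(xy)$ to the three cross--effects already available.

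First I would upgrade Lemma~\ref{scross1} to the $r$--fold cross--effect formula $s(a_1+\dots+a_r)=\sum_{i} s(a_i)+\sum_{i<j} w\bigl(\nabla(a_i,a_j)\bigr)$, proved by induction on $r$ from Lemma~\ref{scross1}, the bilinearity of $\nabla$, and the fact that the image of $w$ is central in $M$ (Theorem~\ref{1.8}); this centrality is exactly what lets the summands be reassembled in any order with only $\nabla$--corrections. Applying it with $a_n=y_{\ol{n}}x^{\ol{n}}$, then using Corollary~\ref{scrosscor} to expand each $s\bigl(y_{\ol{n}}x^{\ol{n}}\bigr)=y_{\ol{n}}s\bigl(x^{\ol{n}}\bigr)+\binom{y_{\ol{n}}}{2}w\bigl(\nabla(x^{\ol{n}},x^{\ol{n}})\bigr)$ and Lemma~\ref{scross2} to rewrite $s\bigl(x^{\ol{n}}\bigr)=\bigl(s(x)\bigr)^{n}-w\bigl(\ol{\nabla}_n(a,b)\bigr)$, expresses $s(xy)$ as $\sum_n y_{\ol{n}}\bigl(s(x)\bigr)^{n}$ plus a sum of $w$--terms. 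Since the action of $N$ on $M$ is by group automorphisms, $y_{\ol{n}}\bigl(s(x)\bigr)^{n}=\bigl(y_{\ol{n}}s(x)\bigr)^{n}$, so nothing is hidden there, and the remaining task is to collect the $w$--terms into a single element of $R\otimes R$: by the bilinearity of $\nabla$, $Q_k$ and $L_k$ and the identity $x^{\ol{n}}=\sum_m x_{\ol{m}}[\ol{m}+\ol{n}]$, each term becomes an explicit double sum in the coefficients $x_{\ol{m}}, y_{\ol{n}}$, and any contribution lying in the diagonal subgroup $\Gamma(K)\subset R\otimes R$ may be discarded, since $w$ factors through $R\otimes R/\Gamma(K)$.

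The hard part will be the combinatorial bookkeeping attached to the ``carry'' $m+n\ge f$ in $\pi_1=\Z/f\Z$: this carry is precisely what produces the diagonal terms $w\bigl([\ol{j}]\otimes[\ol{j}]\bigr)$ -- the $L_k$ part of $\ol{\nabla}_k$ -- in Lemma~\ref{scross2}, and one must track their cumulative effect over all $n$ without error and then verify that the full correction collapses, modulo $\Gamma(K)$, to a manageable closed form; a secondary nuisance is combining correctly the integer weights $\binom{y_{\ol{n}}}{2}$ and $y_{\ol{n}}y_{\ol{n'}}$ that arise from scaling a summand versus splitting a sum. Once the formula is explicit, the intended application is immediate: if $y\in\ker(d_x)$, so $xy=0$, the identity exhibits $\sum_n y_{\ol{n}}\bigl(s(x)\bigr)^{n}$ as a prescribed central element $-w\bigl(\Phi(x,y)\bigr)$ of $M$, and transporting $\Phi(x,y)$ along $R\otimes R/\Gamma(K)\twoheadrightarrow\Gamma(\pi_2)$ yields the value of the splitting function $u_x$ on $y$, hence the functions $A$ and $B$ of Theorem~\ref{AandBforex}.
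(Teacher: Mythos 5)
Your proposal is correct and follows essentially the same route as the paper's proof of Lemma~\ref{mu}: decompose $xy=\sum_{i}y_{\ol{i}}x^{\ol{i}}$ and combine the iterated cross--effect of Lemma~\ref{scross1}, the scalar formula of Corollary~\ref{scrosscor}, and the action formula of Lemma~\ref{scross2}, using the centrality of the image of $w$ to collect all correction terms. The only cosmetic difference is that you compute from $s(xy)$ outward while the paper starts from $\sum_i y_{\ol{i}}\bigl(s(x)\bigr)^i$ and applies Corollary~\ref{scrosscor} before Lemma~\ref{scross2}, which is why its correction term appears as $\ol{\nabla}_i(y_{\ol{i}}x)$ rather than your $y_{\ol{i}}\ol{\nabla}_i(x)$--type terms; the two bookkeepings agree.
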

\begin{lem}\label{mu}
Take $x, y \in R$. Then
\[\sum_{i=0}^{f-1} y_{\ol{i}} \big(s(x)\big)^i = s(xy) + w(\mu(x,y)),\]
where $\mu: R \times R \rightarrow R \otimes R$ is given by
\[\mu(x,y) = - \sum_{i < j} y_{\ol{i}} \, y_{\ol{j}} \,\nabla(x^{\ol{i}}, x^{\ol{j}}) + \sum_{i=0}^{f-1} \big( \ol{\nabla}_i(y_{\ol{i}} x) - \binom{y_{\ol{i}}}{2} \nabla(x,x)^{\ol{i}} \big).\]
\end{lem}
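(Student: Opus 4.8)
The plan is to reduce the identity to the cross-effect formulae already at our disposal: Corollary \ref{scrosscor} handles the integer multiples $y_{\bar i}(-)$, Lemma \ref{scross2} handles the action of a single element $\bar i \in \pi_1$, and Lemma \ref{scross1} handles the failure of $s$ to be additive. The computation is driven throughout by three structural facts: the image of $w$ is central in $M$ (Theorem \ref{1.8}), $w$ is equivariant so that $\big(w(\xi)\big)^i = w(\xi^{\bar i})$, and the action of each $i \in N = \Z$ on $M$ is a group automorphism, hence commutes with finite sums and with integer multiples.

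The first step is a local identity, obtained by fixing $i$ and computing $\big(s(y_{\bar i}x)\big)^i$ in two ways. Lemma \ref{scross2}, applied to $y_{\bar i}x$, gives $\big(s(y_{\bar i}x)\big)^i = s(y_{\bar i}x^{\bar i}) + w\big(\bar\nabla_i(y_{\bar i}x)\big)$, using $(y_{\bar i}x)^{\bar i} = y_{\bar i}x^{\bar i}$. Alternatively, Corollary \ref{scrosscor} gives $s(y_{\bar i}x) = y_{\bar i}\,s(x) + \binom{y_{\bar i}}{2}w\big(\nabla(x,x)\big)$, and applying the action of $i$ together with equivariance of $w$ turns this into $\big(s(y_{\bar i}x)\big)^i = y_{\bar i}\,\big(s(x)\big)^i + \binom{y_{\bar i}}{2}w\big(\nabla(x,x)^{\bar i}\big)$. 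Equating the two expressions and solving for $y_{\bar i}\big(s(x)\big)^i$ yields
\[ y_{\bar i}\,\big(s(x)\big)^i = s(y_{\bar i}x^{\bar i}) + w\!\left( \bar\nabla_i(y_{\bar i}x) - \binom{y_{\bar i}}{2}\nabla(x,x)^{\bar i} \right). \]

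The second step is to sum this over $i = 0, \ldots, f-1$. The $w$-terms combine, since $w$ is a homomorphism, into $w$ of the second summand of $\mu(x,y)$. For the leading terms I would first record the telescoping identity $s\big(\textstyle\sum_i v_i\big) = \sum_i s(v_i) + \sum_{i<j} w\big(\nabla(v_i,v_j)\big)$ for any finite family $v_0, \ldots, v_{f-1} \in R$; this follows by induction on the number of summands from Lemma \ref{scross1}, using that $\nabla$ is linear in its first argument and that the image of $w$ is central. Since $xy = \sum_i y_{\bar i}x^{\bar i}$, applying this with $v_i = y_{\bar i}x^{\bar i}$ and using bilinearity of $\nabla$ gives $\sum_i s(y_{\bar i}x^{\bar i}) = s(xy) - \sum_{i<j} y_{\bar i}y_{\bar j}\,w\big(\nabla(x^{\bar i},x^{\bar j})\big)$. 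Substituting this back rearranges the total into $s(xy) + w(\mu(x,y))$, which is the assertion.

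I expect the only genuine bookkeeping hazard to be the telescoping identity for $s$: because $\nabla$ is linear in its first slot while the additivity defect of $s$ is not symmetric, the cross terms must accumulate exactly as the ordered sum $\sum_{i<j}$ rather than as $\tfrac{1}{2}\sum_{i\ne j}$, and the induction has to be kept honest about this. Everything else is direct substitution of Corollary \ref{scrosscor}, Lemma \ref{scross2} and Lemma \ref{scross1}, together with the centrality and equivariance of $w$.
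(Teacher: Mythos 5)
Your argument is correct and follows essentially the same route as the paper: rewrite $y_{\ol{i}}\big(s(x)\big)^i$ via Corollary \ref{scrosscor} and Lemma \ref{scross2} together with equivariance of $w$, then recombine the sum $\sum_i s(y_{\ol{i}}x^{\ol{i}})$ into $s(xy)$ using the multi--summand form of Lemma \ref{scross1} and bilinearity of $\nabla$. The only difference is presentational (you compute $\big(s(y_{\ol{i}}x)\big)^i$ in two ways and equate, where the paper chains the substitutions), and your explicit caution about the ordered cross terms $\sum_{i<j}$ and the centrality of $\operatorname{im} w$ is exactly the bookkeeping the paper relies on.
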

\begin{proof}
By Lemmata \ref{scross1} and \ref{scross2} and Corollary \ref{scrosscor}, we obtain, for $x, y \in R$,
\begin{eqnarray*}
\sum_{i=0}^{f-1} y_{\ol{i}} \big(s(x)\big)^i 
& = & \sum_{i=0}^{f-1}  \big(y_{\ol{i}} s(x)\big)^i \\
& = & \sum_{i=0}^{f-1}  \big( s(y_{\ol{i}} x) - \binom{y_{\ol{i}}}{2} w(\nabla(x,x))\big)^i\\
& = & \sum_{i=0}^{f-1}  s(y_{\ol{i}} x^{\ol{i}}) +  w(\ol{\nabla}_i( y_{\ol{i}} x))  - \big( \binom{y_{\ol{i}}}{2} w(\nabla(x,x))\big)^i \\
& = & s(\sum_{i=0}^{f-1} y_{\ol{i}} \, x^{\ol{i}}) - \sum_{i < j} w(\nabla(y_{\ol{i}} \, x^{\ol{i}}, y_{\ol{j}} \, x^{\ol{j}}))  + \sum_{i=0}^{f-1}  w(\ol{\nabla}_i(y_{\ol{i}} x)) - \binom{y_{\ol{i}}}{2} w(\nabla(x,x)^{\ol{i}}).\\
\end{eqnarray*}
\end{proof}

Finally, the definitions and a simple calculation yield
\begin{lem}\label{mu(x,y|z)}
For $x, y, z \in R$ and with the notation in (\ref{f(x|y)}), 
\[\mu(x,y|z) = - \sum_{i < j} (y_{\ol{i}} \, z_{\ol{j}} + z_{\ol{i}} \, y_{\ol{j}}) \,\nabla(x^{\ol{i}}, x^{\ol{j}}) + 2 \sum_{i=1}^{f-1}y_{\ol{i}} z_{\ol{i}} Q_i(x) - \sum_{i=0}^{f-1} y_{\ol{i}} z_{\ol{i}} \nabla(x,x)^{\ol{i}}.\]
Hence, for fixed $x \in R, \mu(x, \,\,): R\times R \rightarrow R \otimes R, (y,z) \mapsto \mu(x,y|z)$ is bilinear.
\end{lem}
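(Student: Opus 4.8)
The plan is to derive the formula for $\mu(x,y|z)$ directly from the closed form of $\mu(x,y)$ given in Lemma \ref{mu}, treating each of its three summands separately and computing its cross-effect in the variable $y$ (with $x$ held fixed). Recall
\[
\mu(x,y) = - \sum_{i < j} y_{\ol{i}} \, y_{\ol{j}} \,\nabla(x^{\ol{i}}, x^{\ol{j}}) + \sum_{i=0}^{f-1} \ol{\nabla}_i(y_{\ol{i}} x) - \sum_{i=0}^{f-1}\binom{y_{\ol{i}}}{2} \nabla(x,x)^{\ol{i}}.
\]
The first term is a quadratic form in the coordinates $(y_{\ol 0},\dots,y_{\ol{f-1}})$, so its cross-effect with respect to $y$ is the associated symmetric bilinear form; this is a purely formal manipulation and yields $-\sum_{i<j}(y_{\ol i}z_{\ol j}+z_{\ol i}y_{\ol j})\,\nabla(x^{\ol i},x^{\ol j})$. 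The third term is a sum of $\binom{y_{\ol i}}{2}=\tfrac{y_{\ol i}(y_{\ol i}-1)}{2}$, and the elementary identity $\binom{a+b}{2}-\binom a2-\binom b2 = ab$ gives cross-effect $-\sum_i y_{\ol i}z_{\ol i}\,\nabla(x,x)^{\ol i}$.

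The middle term requires a little more care. I would use the decomposition $\ol\nabla_i = Q_i + L_i$ from Lemma \ref{scross2}, where, writing $\ol\nabla_i$ applied to the single argument $r\in R$ as $\ol\nabla_i(r)$ with $r$ split into its two blocks, $Q_i$ is \emph{bilinear} in the two blocks of its argument while $L_i$ is \emph{linear}. Since $Q_i$ is bilinear, $\ol\nabla_i$ is a quadratic function of its argument with homogeneous quadratic part $Q_i$ and linear part $L_i$; hence the cross-effect of $r\mapsto\ol\nabla_i(r)$ is $2Q_i$ evaluated appropriately (the linear part $L_i$ contributes nothing to a cross-effect). Substituting $r = y_{\ol i}x$ and using linearity of $Q_i$ in each block, the cross-effect of $y\mapsto \sum_i\ol\nabla_i(y_{\ol i}x)$ in $y$ comes out as $2\sum_{i\geq 1} y_{\ol i}z_{\ol i}\,Q_i(x)$ (the $i=0$ term drops because $\ol\nabla_0$ is trivial), where $Q_i(x)$ abbreviates $Q_i$ evaluated on the two blocks of $x$. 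Collecting the three contributions yields the stated formula.

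The bilinearity claim for $y,z\mapsto\mu(x,y|z)$ with $x$ fixed is then immediate by inspection: every summand in the displayed expression is linear in the coordinates of $y$ and linear in the coordinates of $z$ (the first term is visibly so; $Q_i(x)$ and $\nabla(x,x)^{\ol i}$ are constants once $x$ is fixed, multiplied by $y_{\ol i}z_{\ol i}$). Alternatively one can argue abstractly: $\mu(x,-)$ has zero constant term and a cross-effect which is the bilinear form just computed, so $\mu(x,-)$ is a quadratic map and its polarization is bilinear.

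I expect the main obstacle to be bookkeeping rather than conceptual: one must be scrupulous about which of $\nabla$, $\ol\nabla_i$, $Q_i$, $L_i$ is linear versus bilinear versus quadratic in which argument, and about the index ranges (in particular that the $i=0$ contribution to the $\ol\nabla_i$ sum vanishes, matching the lower limit $i=1$ in the $Q_i$ sum of the statement). Once these are tracked correctly, the identity $\binom{a+b}{2}-\binom a2-\binom b2=ab$ and the standard fact that a quadratic form's cross-effect is twice its symmetric bilinear companion do all the real work, which is why the paper calls this "a simple calculation."
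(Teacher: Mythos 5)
Your proposal is correct and is exactly the computation the paper intends: the paper offers no explicit proof beyond ``the definitions and a simple calculation,'' and that calculation is precisely your term-by-term cross-effect of the formula in Lemma \ref{mu}, using the identity $\binom{a+b}{2}-\binom{a}{2}-\binom{b}{2}=ab$, the bilinearity of $Q_i$ (so that $\ol{\nabla}_i(y_{\ol{i}}x)=y_{\ol{i}}^2Q_i+y_{\ol{i}}L_i$ and the linear part $L_i$ cancels in the cross-effect), and the vanishing of $\ol{\nabla}_0$. The only nitpick is your closing phrase about a quadratic form's cross-effect being ``twice'' its bilinear companion, which applies to the diagonal terms $y_{\ol{i}}^2Q_i$ but not to the off-diagonal sum $\sum_{i<j}y_{\ol{i}}y_{\ol{j}}\nabla(x^{\ol{i}},x^{\ol{j}})$, whose cross-effect you nonetheless compute correctly in the body of the argument.
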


\section{Quadratic Modules}
In dimension $3$, quadratic modules assume the role played by crossed modules in dimension $2$. We recall the notion of quadratic modules and totally free quadratic modules, see \cite{Baues}, which we require for the description of the third homotopy group $\pi_3(P_{f,x})$ of a $3$--dimensional pseudo--projective space $P_{f,x}$, as in (\ref{P_{f,x}}). 

A \emph{quadratic module} $(\omega, \delta, \partial)$ consists of a commutative diagram of group homomorphisms
\[\xymatrix{
& C \otimes C \ar[dl]_\omega \ar[d]^w & \\
L \ar[r]^\delta & M \ar[r]^\partial & N,}\]
such that
\begin{itemize}
\item $\partial: M \rightarrow N$ is a nil(2)--module with quotient map $M \twoheadrightarrow C = (M^{cr})^{ab}, x \mapsto \{ x\}$, and Peiffer commutator map $w$ given by $w(\{x\} \otimes \{y\}) = \langle x, y \rangle$;
\item the \emph{boundary homomorphisms} $\partial$ and $\delta$ satisfy $\partial \delta = 0$, and the \emph{quadratic map} $\omega$ is a lift of $w$, that is, for $x, y \in M$,
\[\delta \omega (\{ x \} \otimes \{ y \} ) = \langle x, y \rangle;\]
\item $N$ acts on $L$, all homomorphisms are equivariant with respect to the action of $N$ and, for $a \in L$ and $x \in M$,
\begin{equation}\label{qdef1}
a^{\partial(x)} = a + \omega(\{\delta a\} \otimes \{x\} + \{x\} \otimes \{\delta a\});
\end{equation}
\item finally, for $a, b \in L$, 
\begin{equation}\label{qdef2}
(a,b) = -a-b+a+b = \omega(\{\delta a\} \otimes \{ \delta b\}).
\end{equation}
\end{itemize}
A \emph{map} $\varphi: (\omega, \delta, \partial) \rightarrow (\omega', \delta', \partial')$ of quadratic modules is given by a commutative diagram
\[\xymatrix{
C \otimes C \ar[d]_{\varphi_{\ast} \otimes \varphi_{\ast}} \ar[r]^\omega & L \ar[d]^l \ar[r]^\delta & M \ar[d]^m\ar[r]^\partial & N \ar[d]^n\\
C' \otimes C' \ar[r]^{\omega'} & L' \ar[r]^{\delta'} & M' \ar[r]^{\partial'} & N' }\]
where $l$ is $n$--equivariant, and $(m,n)$ is a map between pre--crossed modules inducing $\varphi_\ast: C \rightarrow C'$. 

Given a nil(2)--module $\partial: M \rightarrow N$, a free group $F$ and a homomorphism $\tilde f: F \rightarrow M$ with $\partial \tilde f = 0$, a quadratic module $(\omega, \delta, \partial)$ is \emph{free with basis} $\tilde f$, if there is a homomorphism $i: F \rightarrow L$ with $\delta i = \tilde f$, such that the following universal property is satisfied: For every quadratic module $(\omega', \delta', \partial')$ and map $(m,n): \partial \rightarrow \partial'$ of nil(2)--modules and every homomorphism $l_F: F \rightarrow L'$ with $m\tilde f = \delta' l_F$, there is a unique map $(l,m,n)$ of quadratic modules with $li=l_F$.
\[\xymatrix{
L \ar[rr]^\delta \ar@{-->}[dd]_l && M \ar[rr]^\partial \ar[dd]^m && N \ar[dd]^n \\
& F \ar[ul]_i \ar[ur]_{\tilde f} \ar[dl]^{l_F} &&& \\
L' \ar[rr]_{\delta'} && M' \ar[rr]_{\partial'} && N'}\]
For $F = \langle Z \rangle$, the homomorphism $\tilde f$ is determined by its restriction $\tilde f|_Z$ which is then called a \emph{basis} for $(\omega, \delta, \partial)$. A  quadratic module $(\omega, \delta, \partial)$ is \emph{totally free} if it is free, if $\partial$ is a free nil(2)--module and if $N$ is a free group.

\section{The Homotopy Group $\pi_3$ of a Pseudo--Projective 3--Space and the Associated Splitting Function $u_x$}\label{pi_3{P_{f,x}}}
In this section we return to pseudo--projective 3--spaces
\begin{equation*}
P_{f,x} = S^1 \cup e^2 \cup e^3,
\end{equation*}
determined by the pair $(f,x)$ of attaching maps, $f \in \pi_1(S^1) = \Z$ and $x \in \pi_2(P_f) = K \subseteq R$, as in (\ref{P_{f,x}}). Using results on totally free quadratic modules in \cite{Baues}, we investigate the structure of the third homotopy group  $\pi_3(P_{f,x})$ as a $\pi_1$--module by defining a set--theoretic splitting $u_x$ of J.H.C. Whitehead's Certain Exact Sequence of the universal cover, $\widehat{P}_{f,x}$, 
\begin{equation}\label{JHCses}
\xymatrix{
\Gamma(\pi_2(P_{f,x})) \,\, \ar@{>->}[r] & \pi_3(P_{f,x}) \ar@{->>}[r] & \text{H}_3(\widehat{P}_{f,x}) \ar@/^/[l]^{u_x}.}
\end{equation}

Recall that $\pi_1 = \pi_1(P_f) = \Z / f\Z$ with augmentation ideal $K = \ker f \varepsilon$, and let $B$ be the image of $d_x: R \rightarrow R, y \mapsto xy$. Then 
\begin{equation}\label{pi_2ofP_{f,x}}
\pi_2 (P_{f,x}) = \text{H}_2(\widehat{P}_{f,x}) = K / B = (\ker f \varepsilon) / xR.
\end{equation} 
The functor $\sigma$ in (IV 6.8) in \cite{Baues} assigns a totally free quadratic module $(\omega, \delta, \partial)$ to the pseudo--projective 3--space $P_{f,x}$ and we obtain the commutative diagram 
\[\xymatrix{
\Gamma(\pi_2 (P_{f,x})) \,\, \ar@{>->}[r] \ar@{>->}[d] & R \otimes R/\Delta_B  \ar@{->>}[r]^q \ar@{>->}[d] ^\omega & R \otimes R / \Gamma(K) \ar@{>->}[d]^w & \\
\pi_3(P_{f,x}) \,\, \ar@{>->}[r] \ar@{->>}[d] & L \ar[r]^\delta \ar@{->>}[d]  & M \ar[r]^\partial \ar@{->>}[d] & N \ar@{=}[d] \\
\text{H}_3(\widehat P_{f,x}) \,\, \ar@{>->}[r] \ar@/_/[u]_{u_x} & R \ar[r]^{d_x} \ar@/_/[u]_{t_x} & R \ar[r]^{f \cdot \varepsilon} \ar@/_/[u]_s & \Z
}\]
of straight arrows. Here the generators $e_3 \in L$, $e_2 \in M$ and $e_1 = 1 \in N = \Z$ correspond to the cells of $P_{f,x}$ and $\partial$ is the totally free nil(2)--module of Lemma \ref{ccmiso}. The right hand column is the short exact sequence (\ref{ses3}) with the set theoretic splitting $s$ defined in Section \ref{nil2comp}. The short exact sequence in the middle column is described in (IV 2.13) in \cite{Baues}, where the product $[\alpha, \beta]$ of $\alpha \in K$ and $\beta \in B$ is given by $[\alpha,\beta] = \alpha \otimes \beta + \beta \otimes \alpha \in R \otimes R$ and 
\[\Delta_B = \Gamma(B) + [K,B].\]  
By Corollary (IV 2.14)  in \cite{Baues}, taking kernels yields Whitehead's short exact sequence (\ref{JHCses}) in the left hand column of the diagram, that is, $\ker q = \Gamma(\pi_2(\widehat{P}_{f,x})), \ker \delta = \pi_3(P_{f,x})$ and $\ker d_x = \text{H}_3(\widehat{P}_{f,x})$. As $(\omega, \delta, \partial)$ is a quadratic module associated to $P_{f,x}$, we may assume that $\delta(e_3) = s(x)$.

In Section \ref{nil2comp} we determined the structure of $M$ as an $N$--module by computing the cross--effects of the set--theoretic splitting $s$ with respect to addition and the action. Analogously to the definition of $s$, we now define a set-theoretic splitting of the short exact sequence in the second column of this diagram by
\[t_x: R \longrightarrow L, \quad \sum_{k = 0}^{f-1} y_{\ol{k}} \, [\ol{k}] \longmapsto y_{\ol{0}} \, e_3^0  + \ldots + y_{\ol{f-1}} \, e_3^{f-1}.\]
The cross--effects of $t_x$ with respect to addition and the action determine the $N$--module structure of $L$, but we want to determine the module structure of $\pi_3(P_{f,x})$. To obtain a set-theoretic splitting of the first column which will allow us to do so, we must adjust $t_x$, such that the image of $\text{H}_3(\widehat P_{f,x})$ under the new splitting is contained in $\ker\delta = \pi_3(P_{f,x})$. Recall that $\delta$ is a homomorphism which is equivariant with respect to the action of $N$ and $\delta(e_3) = s(x)$. Thus Lemma \ref{mu} yields, for $y \in \text{H}_3(\widehat P_{f,x}) = \ker d_x$, that is, for $d_x(y) = xy = 0$,
\begin{eqnarray*}
\delta(t_x(y))
& = & \delta\big( \sum_{i = 0}^{f-1} y_i e^{\ol{i}}_3 \big) = \sum_{i = 0}^{f-1} y_i \delta(e_3)^{\ol{i}} = \sum_{i = 0}^{f-1} y_i \big(s(x)\big)^{\ol{i}} \\
& = & s(xy) + w(\mu(x,y)) \\
& = &  \delta \omega \mu(x,y).
\end{eqnarray*}
Hence $t_x(y) - \omega\mu(x,y) \in \ker\delta = \pi_3(P_{f,x})$, giving rise to the set theoretic splitting 
\[u_x: \text{H}_3(\widehat P_{f,x}) \longrightarrow \pi_3(P_{f,x}), \quad y \longmapsto t_x(y) - \omega\mu(x,y)\]
of the Hurewicz map $\pi_3 \twoheadrightarrow \text{H}_3$. The cross--effects of $u_x$ with respect to addition and the action determine (\ref{JHCses}) as a short exact sequence of $\pi_1$--modules. In Section \ref{quadcomp} we determine the cross--effects of $t_x$ and investigate the properties of the functions $A$ and $B$ describing the cross--effects of $u_x$. 

\section{Computations in Free Quadratic Modules}\label{quadcomp}
The first two results of this Section describe the cross--effects of $t_x$ with respect to addition and the action, respectively. We then turn to the properties of the cross--effects of $u_x$.
\begin{lem}\label{tcross1}
Take $z, y \in R$. Then, with the notation in (\ref{f(x|y)}), 
\[t_x(z|y) %= t_x(z+y) - ( t_x(z) + t_x(y)) 
= \omega (\Psi(z,y)),\]
where
\[\Psi(z,y) = \sum_{m=1}^{f-1}\sum_{n=0}^{m-1} z_{\ol{m}} \, y_{\ol{n}} \, x [\ol{n}] \otimes x [\ol{m}].\]
Thus $\Psi(z,y)$ is linear in $z$ and $y$, yielding a homomorphism $\Psi: R \otimes R \rightarrow R \otimes R$.
\end{lem}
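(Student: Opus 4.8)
The plan is to compute $t_x(z+y)$ directly from the defining formula for $t_x$, reorganising the sum $\sum_{k}(z_{\ol k}+y_{\ol k})\,e_3^k$ into $t_x(z)+t_x(y)$ plus a correction term built out of commutators of the generators $e_3^k$ in $L$. This mirrors exactly the computation of $s(x|y)$ in Lemma \ref{scross1}, but now one dimension up. First I would record that, since $t_x(z)$ and $t_x(y)$ are each ordered sums of the $e_3^k$ with $k$ increasing, passing from the interleaved sum $\sum_k (z_{\ol k}+y_{\ol k})e_3^k$ to the concatenation $t_x(z)+t_x(y)$ requires moving each block $y_{\ol n}e_3^n$ past each block $x_{\ol m}e_3^m$ with $m>n$ — wait, more precisely past each $z_{\ol m}e_3^m$ with $m>n$ — producing a product of commutators $\sum_{m>n} z_{\ol m}y_{\ol n}\,(e_3^n,e_3^m)$, provided these commutators are central in $L$ and provided $(k e_3^m,\ell e_3^n)=k\ell\,(e_3^m,e_3^n)$, just as in the proof of Lemma \ref{scross1}.

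The key input is that commutators in $L$ are governed by the quadratic module axiom \eqref{qdef2}: for $a,b\in L$ one has $(a,b)=\omega(\{\delta a\}\otimes\{\delta b\})$. Applying this with $a=e_3^n$, $b=e_3^m$ and using $\delta(e_3)=s(x)$, equivariance of $\delta$, and $\{s(x)^{\ol k}\}=x[\ol k]\in C=R$ (the image of $s(x)^{\ol k}$ in $(M^{cr})^{ab}=R$), I get $(e_3^n,e_3^m)=\omega\big(x[\ol n]\otimes x[\ol m]\big)$. In particular these elements lie in the image of $\omega$, hence are central in $L$ (the image of $\omega$ is central since $\delta\omega=w$ has central image by Theorem \ref{1.8}, and \eqref{qdef1} controls the $N$-action), which legitimises the reordering and the bilinearity step $(ke_3^m,\ell e_3^n)=k\ell(e_3^m,e_3^n)$ by the same induction as before. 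Bilinearity of $\omega$ then lets me pull the integer coefficients inside, so the correction term is
\[
\sum_{m=1}^{f-1}\sum_{n=0}^{m-1} z_{\ol m}\,y_{\ol n}\,\omega\big(x[\ol n]\otimes x[\ol m]\big)
=\omega\Big(\sum_{m=1}^{f-1}\sum_{n=0}^{m-1} z_{\ol m}\,y_{\ol n}\, x[\ol n]\otimes x[\ol m]\Big)
=\omega(\Psi(z,y)),
\]
which is the claimed identity. Linearity of $\Psi$ in $z$ and in $y$, and hence that it defines a homomorphism $R\otimes R\to R\otimes R$, is immediate from the bilinear shape of the double sum.

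The main obstacle is the bookkeeping needed to justify the reordering rigorously: one must verify that the commutators $(e_3^n,e_3^m)$ are genuinely central in $L$ and that the naive "move blocks past each other" argument produces exactly the double sum over $m>n$ with no lower-order corrections — this is where one invokes centrality of $\operatorname{im}\omega$ together with the nil(2) structure of $\partial\colon M\to N$ lifted through $\delta$. Once centrality is in hand, the computation is the formal analogue of Lemma \ref{scross1} and carries over verbatim, with $w$ replaced by $\omega$ and $[\ol n]\otimes[\ol m]-[\ol m]\otimes[\ol m]$ replaced by $x[\ol n]\otimes x[\ol m]$ (the $[\ol m]\otimes[\ol m]$ term that appeared in Lemma \ref{scross1} is absorbed here because $(e_3^n,e_3^m)$ is expressed via $\omega$ directly rather than via a difference of Peiffer commutators).
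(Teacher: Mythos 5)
Your argument is correct and is essentially the paper's own proof: reorder the sum exactly as in Lemma \ref{scross1} to write $t_x(z|y)=\sum_{m>n} z_{\ol m}\,y_{\ol n}\,(e_3^{\ol n},e_3^{\ol m})$, then apply axiom (\ref{qdef2}) together with $\{\delta(e_3^{\ol n})\}=d_x([\ol n])=x[\ol n]$ to convert each commutator into $\omega(x[\ol n]\otimes x[\ol m])$. Only a cosmetic remark: the centrality of the image of $\omega$ in $L$, which you need for the reordering, follows most directly from (\ref{qdef2}) itself, since $\{\delta\omega(u)\}=\{w(u)\}=0$ in $C=(M^{cr})^{ab}$ forces $(a,\omega(u))=0$ for all $a\in L$, rather than from the centrality statement in Theorem \ref{1.8}.
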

\begin{proof}
As in the proof of Lemma \ref{scross1}, we obtain
\[t_x(z|y) = \sum_{m=1}^{f-1}\sum_{n=0}^{m-1} z_{\ol{m}} \, y_{\ol{n}} (e_3^{\ol{n}}, e_3^{\ol{m}}).\]
Note that $\{\delta(e_3^{\ol{n}})\} = \{\delta(t_x([{\ol{n}}]))\} = d_x([{\ol{n}}]) = x[{\ol{n}}]$. Thus (\ref{qdef2}) yields
\begin{equation*}
t_x(z|y) 
= \sum_{m=1}^{f-1}\sum_{n=0}^{m-1} z_{\ol{m}} \, y_{\ol{n}}  \, \omega (\{\delta(e_3^{\ol{n}})\} \otimes \{\delta(e_3^{\ol{m}})\})
=  \sum_{m=1}^{f-1}\sum_{n=0}^{m-1} z_{\ol{m}} \, y_{\ol{n}}  \, \omega (x[\ol{n}] \otimes x[\ol{m}]).
\end{equation*}
\end{proof}

As $N = \Z$ is cyclic, the action of $N$ on $L$ is determined by the generator $1 \in \Z$. 
\begin{lem}\label{tcross2}
Take $x \in R$. Then
\[\big(t_x(y)\big)^1 = t_x(y^{\ol{1}}) + \omega (\ol{\Psi}_1(a,b)),\] 
where 
\[\ol{\Psi}_1 = \sum_{p=0}^{f-2} y_{\ol{p}} \, y_{\ol{f-1}} \, x [\ol{p+1}] \otimes x [\ol{0}]   
+ y_{\ol{f-1}} \,( x \otimes [\ol{0}] + [\ol{0}] \otimes x).\]
\end{lem}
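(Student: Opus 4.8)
\textbf{Proof proposal for Lemma \ref{tcross2}.}

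The plan is to mimic the computation of Lemma \ref{scross2}, transporting it through the quadratic map $\omega$ instead of the Peiffer commutator map $w$. First I would record the behaviour of the action of the generator $1 \in N = \Z$ on a single basis element $e_3^{\ol{n}}$. For $0 \leq n < f-1$ we have $[\ol{n}]^{\ol{1}} = [\ol{n+1}]$ with no ``wrap-around'', so $\big(t_x([\ol{n}])\big)^1 = (e_3^n)^1 = e_3^{n+1} = t_x([\ol{n}]^{\ol{1}})$. For $n = f-1$ we cross the boundary, and here the quadratic module axiom (\ref{qdef1}) with $a = e_3$, using $\partial(e) = \partial e = f$ and $\delta(e_3) = s(x)$ (so $\{\delta e_3\} = d_x(1_R) = x$), gives
\[
(e_3)^{f} = (e_3)^{\partial(e)} = e_3 + \omega\big(\{\delta e_3\} \otimes \{e\} + \{e\} \otimes \{\delta e_3\}\big) = e_3 + \omega\big(x \otimes [\ol{0}] + [\ol{0}] \otimes x\big),
\]
which accounts for the second summand $y_{\ol{f-1}}\,(x \otimes [\ol{0}] + [\ol{0}] \otimes x)$ in $\ol{\Psi}_1$.

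Next I would expand $\big(t_x(y)\big)^1$ by writing $y = \sum_{p=0}^{f-1} y_{\ol{p}}[\ol{p}]$, so that $t_x(y) = \sum_p y_{\ol{p}} e_3^p$, and apply the action of $1$. Since the action distributes over sums only up to commutator correction terms, I pick up cross-effect terms exactly as in Lemma \ref{scross2}: applying $(\;\cdot\;)^1$ to a sum $\sum_p y_{\ol{p}} e_3^p$ and then reordering the result to match $t_x(y^{\ol1}) = t_x\big(\sum_p y_{\ol{p}}[\ol{p}]^{\ol 1}\big)$ produces commutators $(e_3^{\ol{p}+\ol 1}, e_3^{\ol 0})$ between the single ``wrapped'' generator $e_3^{f-1}\mapsto e_3^{\ol 0}$ (after wrap, it sits at index $0$) and the generators $e_3^{\ol{p}+\ol 1}$, $0 \leq p \leq f-2$, coming before it. By (\ref{qdef2}) each such commutator equals $\omega\big(\{\delta e_3^{\ol{p}+\ol 1}\} \otimes \{\delta e_3^{\ol 0}\}\big) = \omega\big(x[\ol{p+1}] \otimes x[\ol{0}]\big)$, weighted by $y_{\ol{p}}\,y_{\ol{f-1}}$; summing over $p$ gives the first summand $\sum_{p=0}^{f-2} y_{\ol{p}}\,y_{\ol{f-1}}\, x[\ol{p+1}] \otimes x[\ol{0}]$ of $\ol{\Psi}_1$. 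Here I use, as in Section \ref{nil2comp}, that the image of $\omega$ (equivalently, of $w$ composed through $\delta$) is central, so all these correction terms commute and can be collected additively, and that integer multiplicities of commutators behave bilinearly (Corollary \ref{scrosscor} and the computation in Lemma \ref{scross1}).

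Finally I would assemble the two contributions. Combining the wrap-around term from axiom (\ref{qdef1}) (multiplicity $y_{\ol{f-1}}$) with the reordering commutators (multiplicity $y_{\ol{p}} y_{\ol{f-1}}$) yields
\[
\big(t_x(y)\big)^1 = t_x(y^{\ol 1}) + \omega\Big( \sum_{p=0}^{f-2} y_{\ol{p}}\, y_{\ol{f-1}}\, x[\ol{p+1}] \otimes x[\ol{0}] + y_{\ol{f-1}}\big(x \otimes [\ol{0}] + [\ol{0}] \otimes x\big)\Big) = t_x(y^{\ol 1}) + \omega(\ol{\Psi}_1(a,b)),
\]
with $y = (a,b)$ in the decomposition $R = R_1 \times \widehat R_1$ of Lemma \ref{scross2} (here $\widehat R_1 = \Z[\ol{f-1}]$, so $b = y_{\ol{f-1}}[\ol{f-1}]$). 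The main obstacle is bookkeeping: carefully tracking which generator gets ``wrapped'' and therefore must be commuted past all the others when rewriting the permuted sum back into the standard order defining $t_x(y^{\ol 1})$, and making sure the indices of the resulting $\omega(x[\cdot]\otimes x[\cdot])$ terms come out as $[\ol{p+1}] \otimes [\ol{0}]$ rather than some shifted variant. Everything else is a routine application of centrality of $\mathrm{im}\,\omega$, bilinearity of the commutator in integer multiplicities, and the identity $\{\delta(e_3^{\ol n})\} = x[\ol n]$ established in the proof of Lemma \ref{tcross1}.
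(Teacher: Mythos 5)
Your proposal is correct and follows essentially the same route as the paper's proof: compute the action of $1$ on each generator $e_3^{\ol n}$, use axiom (\ref{qdef1}) for the single wrap-around generator $e_3^{f-1}$, and collect the reordering commutators $(e_3^{p+1},e_3^{\ol 0})=\omega(x[\ol{p+1}]\otimes x[\ol 0])$ via (\ref{qdef2}) and centrality. Your evaluation of the wrap-around term as $\omega(x\otimes[\ol 0]+[\ol 0]\otimes x)$ in fact matches the lemma's statement more directly than the paper's intermediate line (which carries an extra $[\ol 1]$ before arriving at the same final formula).
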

\begin{proof}
With $\{\delta(e_3^{\ol{n}})\} = x[{\ol{n}}]$ from above and (\ref{qdef1}), we obtain
\begin{eqnarray*}
e_3^{1+f} 
& = & (e_3^1)^{f} 
= (e_3^1)^{\partial(e)} 
= e^1 + \omega(\{\delta(e_3^1)\} \otimes\{e\} + \{e\} \otimes \{\delta(e_3^1)\}) \\
& = & t_x([\ol{n}]^{\ol{1}})+ \omega( x[\ol{1}] \otimes [\ol{0}] + [\ol{0}] \otimes x[\ol{1}]).
\end{eqnarray*}
Thus, for $\ol{n} \in \pi$,
\begin{equation*}\label{tequa}
\big(t_x([\ol{n}])\big)^1 = 
\begin{cases}
\omega( t_x([\ol{n}]^{\ol{1}}) & \quad \text{for} \quad 0 \leq n < f-1, \\
\omega(t_x([\ol{n}]^{\ol{1}}) + x[\ol{1}] \otimes [\ol{0}] + [\ol{0}] \otimes x[\ol{1}]) & \quad \text{for} \quad n = f - \ell.
\end{cases}
\end{equation*}
With (\ref{qdef2}), we obtain, for $y = \sum_{n=0}^{f-1} y_{\ol{n}} [\ol{n}]$, 
\begin{eqnarray*}
\big(t_x(y)\big)^1 
& = & y_{\ol{0}} \, e_3^1 + y_{\ol{1}} \, e_3^2 \ldots + y_{\ol{f-2}} \, e_3^{f-1} + y_{\ol{f-1}} \, e_3^{f} \\
& = &  y_{\ol{0}} \, t_x([\ol{0}]^{\ol{1}}) + \ldots + y_{\ol{f-2}} \, t_x([\ol{f-1}]^{\ol{1}}) + y_{\ol{f-1}} \,t_x([\ol{f-1}]^{\ol{1}}) +  y_{\ol{f-1}} \, \omega( x \otimes [\ol{0}] + [\ol{0}] \otimes x) \\
& = & t_x(y^{\ol{1}})+ \sum_{p=0}^{f-2} y_{\ol{p}} \, y_{\ol{f-1}} \, (e_3^{p+1},e_3) + y_{\ol{f-1}} \omega( x \otimes [\ol{0}] + [\ol{0}] \otimes x) \\
& = & t_x(y^{\ol{1}}) + \sum_{p=0}^{f-2} y_{\ol{p}} \, y_{\ol{f-1}} \, x [\ol{p+1}] \otimes x [\ol{0}]   
+ y_{\ol{f-1}} \,( x \otimes [\ol{0}] + [\ol{0}] \otimes x)
\end{eqnarray*}
\end{proof}
The next two results concern the properties of the maps $A$ and $B$ which describe the cross--effects of $u_x$ with respect to addition and the action, respectively.
\begin{lem}
For $x \in K$ the map
\[A: \text{H}_3 \widehat P_{f,x} \times \text{H}_3 \widehat P_{f,x} \rightarrow \Gamma(\pi_2 P_{f,x}), (y,z) \mapsto u_x(y|z)\]
is bilinear. 
\end{lem}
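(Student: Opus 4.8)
The plan is to compute $u_x(y \mid z)$ directly from the definition $u_x(y) = t_x(y) - \omega\mu(x,y)$ and show that the result is a sum of bilinear terms in $y$ and $z$. Writing out $u_x(y+z) - u_x(y) - u_x(z)$, the $t_x$--part contributes $t_x(y \mid z)$, which by Lemma \ref{tcross1} equals $\omega(\Psi(z,y))$ with $\Psi$ bilinear; the $\omega\mu$--part contributes $-\omega\big(\mu(x,y+z) - \mu(x,y) - \mu(x,z)\big) = -\omega\big(\mu(x,y\mid z)\big)$, and by Lemma \ref{mu(x,y|z)} the map $(y,z) \mapsto \mu(x,y\mid z)$ is bilinear for fixed $x$. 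Hence $u_x(y\mid z) = \omega\big(\Psi(z,y) - \mu(x,y\mid z)\big)$, a composite of the group homomorphism $\omega$ with a map $R\otimes R \to R\otimes R$ that is bilinear in $(y,z)$.

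The one genuine subtlety is that the identity $u_x(y+z) - (u_x(y)+u_x(z)) = t_x(y\mid z) - \omega\mu(x,y\mid z)$ must be justified even though $L$ (and $\pi_3$) need not be abelian, and even though $\omega$ is only additive, not a priori compatible with the non-abelian group law. First I would note that $\Gamma(\pi_2 P_{f,x})$, the target, is abelian, and that the image of $\omega$ restricted to $\Gamma(\pi_2)\subseteq R\otimes R/\Delta_B$ lands in $\pi_3(P_{f,x})$; more importantly, by the quadratic module axioms the image of $w$ (hence of $\delta\omega$) is central in $M$, and correspondingly commutators in $L$ are governed by \eqref{qdef2}, so all the ``correction'' terms $\omega(\cdots)$ appearing here are central in $L$. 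This centrality is exactly what is used implicitly in the proofs of Lemmata \ref{tcross1} and \ref{tcross2} (the manipulation of commutators of the generators $e_3^{\ol n}$), and it guarantees that rearranging the sum $u_x(y+z)$ into $u_x(y)+u_x(z)+(\text{correction})$ is legitimate.

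Accordingly the key steps, in order, are: (i) expand $u_x(y+z) = t_x(y+z) - \omega\mu(x,y+z)$ and use Lemma \ref{tcross1} to replace $t_x(y+z)$ by $t_x(y)+t_x(z)+\omega(\Psi(z,y))$, the last term being central in $L$ by \eqref{qdef2} so it may be moved freely; (ii) use bilinearity of $\mu(x,-\mid-)$ from Lemma \ref{mu(x,y|z)} to write $\mu(x,y+z) = \mu(x,y)+\mu(x,z)+\mu(x,y\mid z)$ inside $R\otimes R$, so that $\omega\mu(x,y+z) = \omega\mu(x,y) + \omega\mu(x,z) + \omega\mu(x,y\mid z)$ since $\omega$ is a homomorphism; (iii) combine and cancel, using centrality of the $\omega$--terms, to get $u_x(y\mid z) = \omega\big(\Psi(z,y) - \mu(x,y\mid z)\big)$; (iv) conclude that $(y,z)\mapsto \Psi(z,y) - \mu(x,y\mid z)$ is $\Z$--bilinear (sum of a bilinear map and a bilinear map), and that postcomposition with the homomorphism $\omega$ preserves bilinearity, so $A = u_x(-\mid-)$ is bilinear as claimed.

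I expect step (iii) to be the main obstacle, not because of any deep idea but because one must be careful about the order of terms and the role of the basepoint condition $u_x(0)=0$ (which holds since $t_x(0)=0$ and $\mu(x,0)=0$), and about checking that every correction term really is central so that the non-abelian group law in $L$ does not obstruct the rearrangement; once centrality is invoked the computation collapses to the abelian-group identity inside $\Gamma(\pi_2 P_{f,x})$. No new result beyond Lemmata \ref{tcross1}, \ref{mu(x,y|z)} and the quadratic module axioms \eqref{qdef1}--\eqref{qdef2} is needed.
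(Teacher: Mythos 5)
Your proposal is correct and follows essentially the same route as the paper: the paper's proof is exactly the identity $A(y,z)=u_x(y|z)=t_x(y|z)-\omega\mu(x,y|z)=\omega\bigl(\Psi(y,z)-\mu(x,y|z)\bigr)$ followed by an appeal to Lemmata \ref{tcross1} and \ref{mu(x,y|z)}. Your additional care in justifying the rearrangement via centrality of the $\omega$--terms is a point the paper leaves implicit, but it is not a different argument.
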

\begin{proof}
Take $x \in K$ and $y, z \in \text{H}_3 \widehat P_{f,x}$. By definition 
\[A(y,z) = u_x(y|z) = t_x(y|z) - \omega \mu(x, y|z) = \omega \big( \Psi(y,z) - \mu(x, y|z)\big).\]
Thus Lemmata \ref{mu(x,y|z)} and \ref{tcross1} imply that A is bilinear. 
\end{proof}
\begin{lem}
For $x \in K$ define
\[B: \text{H}_3 \widehat P_{f,x} \rightarrow \Gamma(\pi_2 P_{f,x}), y \mapsto (u_x(y))^1 - u_x(y^1)\]
Then
\[ \text{H}_3 \widehat P_{f,x} \times \text{H}_3 \widehat P_{f,x} \rightarrow \Gamma(\pi_2 P_{f,x}), (y,z) \mapsto B(y|z)\]
is bilinear.
\end{lem}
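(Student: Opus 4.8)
The plan is to reduce the assertion to bilinearity of the map $A$ established in the preceding lemma, via the interaction formula between $A$ and the second cross-effect of $B$ recorded in Remark~\ref{obsAB}. Throughout, recall that with the notation of (\ref{f(x|y)}) the map of the preceding lemma is $A(y,z) = u_x(y|z)$, that $B(y) = (u_x(y))^1 - u_x(y^1)$, and that both $\text{H}_3 \widehat P_{f,x}$ and $\pi_3(P_{f,x})$ are $\pi_1$--modules, so the action $c \mapsto c^1$ of the generator $1 \in \pi_1$ is a group homomorphism on each.

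First I would record the identity
\[ B(y|z) \;=\; \big(A(y,z)\big)^1 - A\!\left(y^1, z^1\right) \]
for $y, z \in \text{H}_3 \widehat P_{f,x}$, which is the cross-effect formula of Remark~\ref{obsAB} applied to the splitting function $u_x$. This is a direct expansion: from $u_x(y+z) = u_x(y) + u_x(z) + A(y,z)$ and $u_x(y^1 + z^1) = u_x(y^1) + u_x(z^1) + A(y^1, z^1)$, using $(y+z)^1 = y^1 + z^1$ and additivity of $(\,\cdot\,)^1$ on $\pi_3(P_{f,x})$, one gets $B(y+z) = (u_x(y+z))^1 - u_x((y+z)^1) = B(y) + B(z) + (A(y,z))^1 - A(y^1, z^1)$, and subtracting $B(y)+B(z)$ gives the claim.

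Second, the right-hand side is visibly bilinear. By the preceding lemma $A$ is bilinear, and since $c \mapsto c^1$ is a homomorphism of the $\pi_1$--module $\Gamma(\pi_2 P_{f,x})$, the map $(y,z) \mapsto (A(y,z))^1$ is bilinear. Similarly, since $y \mapsto y^1$ is a homomorphism of $\text{H}_3 \widehat P_{f,x}$, precomposing the bilinear map $A$ with this homomorphism in each argument shows that $(y,z) \mapsto A(y^1, z^1)$ is bilinear. Being a difference of two bilinear maps, $(y,z) \mapsto B(y|z)$ is bilinear, as required.

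There is no genuine obstacle here; the only care needed is the bookkeeping in the first step, i.e. applying additivity of the two module actions in the right places — precisely the content of Remark~\ref{obsAB}. If one prefers to avoid Remark~\ref{obsAB}, one can instead expand $B(y|z)$ directly from $u_x(y) = t_x(y) - \omega\mu(x,y)$ and evaluate the resulting cross-effects using Lemmata~\ref{tcross1}, \ref{tcross2} and \ref{mu(x,y|z)} together with the bilinearity of $\mu(x,\,\cdot\,)$; this works but is substantially more computational, and the formula of Remark~\ref{obsAB} renders it unnecessary.
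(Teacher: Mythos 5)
Your proposal is correct and follows essentially the same route as the paper: both establish the identity $B(y|z) = (A(y,z))^1 - A(y^1,z^1)$ by expanding the cross--effect of $u_x$, and then deduce bilinearity from the bilinearity of $A$ together with the additivity of the $\pi_1$--action. The only cosmetic difference is that you expand starting from $B(y+z)$ while the paper starts from $(A(y,z))^1$; the computation is the same.
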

\begin{proof}
Take $x \in K$ and $y, z \in \text{H}_3 \widehat P_{f,x}$ . Then
\begin{eqnarray*}
(A(y,z))^1 
& = & (u_x(y+z) - (u_x(y) + u_x(z))^1\\
& = & (u_x(y+z))^1 - (u_x(y))^1 - (u_x(z))^1 \\
& = & B(y+z) + u_x((y+z)^1) - (B(y) + u_x(y^1) + B(z) + u_z(z^1)).\\
& = & B(y|z) + A(y^1,z^1)
\end{eqnarray*}
Thus
\begin{eqnarray}
B(y|z) = (A(y,z))^1 - A(y^1,z^1)
\end{eqnarray}
and bilinearity follows from that of $A$ and the properties of an action.
\end{proof}

\section{Examples of Pseudo--Projective $3$--Spaces}\label{examples}
In this Section we provide explicit computations for examples of pseudo--projective $3$--spaces, including proofs for Theorem \ref{non-trivialpi1action}, Theorem \ref{AandBforex} and Theorem \ref{extcalc}.

Note that, as abelian group, the augmentation ideal $K$ of a pseudo--projective $3$--space $P_{f,x}$, as in (\ref{P_{f,x}}), is freely generated by $\{  [\ol{1}] - [\ol{0}], \ldots,  [\ol{f-1}] - [\ol{0}] \}$. We consider pseudo--projective $3$--spaces, $P_{f,x}$, with $x= \tilde{x}([\ol{1}]-[\ol{0}])$ and $\tilde x \in \mathbb Z$. We compute $\pi_2(P_{f,x}), \text{H}_3(\widehat P_{f,x})$, as well as the cross--effects of $u_x$ for this special case. For $f=2$, the general case coincides with the special case and provides an example where $\pi_1$ acts trivially on $\Gamma \pi_2 (P_{2, \tilde x})$ and on $\text{H}_3 (\widehat P_{2, \tilde x})$, but non--trivially on $\pi_3 (P_{2, \tilde x})$.

\begin{lem}\label{speciald_x}
For $x = \tilde{x}([\ol{1}]-[\ol{0}])$ with $\tilde x \in \mathbb Z$,
\[\text{H}_3(\widehat P_{f,x}) = \{ \tilde y N \, | \, \tilde y \in \mathbb Z\} \cong \Z,\]
is generated by the norm element $N = \sum_{k=0}^{f-1}[\ol{k}]$. Hence $\pi_1$ acts trivially on $\text{H}_3(\widehat P_{f,x})$. Furthermore,
\[\pi_2(P_{f,x}) = (\Z / \tilde x \Z) \otimes _\Z K.\]
Hence $\tilde x^2 \ell = 0$ for every $\ell \in \Gamma(\pi_2 (P_{f,x}))$.
\end{lem}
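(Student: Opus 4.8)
The plan is to compute the kernel and cokernel of the map $d_x \colon R \to R$, $y \mapsto xy$, for the specific element $x = \tilde x([\ol 1]-[\ol 0])$, and then to identify $\text{H}_3(\widehat P_{f,x}) = \ker d_x$, $\pi_2(P_{f,x}) = K/xR$, and finally to read off the annihilation statement for $\Gamma(\pi_2(P_{f,x}))$. First I would observe that $x = \tilde x([\ol 1] - [\ol 0]) = \tilde x([\ol 0])([\ol 1]-[\ol 0])$ and that $[\ol 1]-[\ol 0] = [\ol 1] - 1_R$ is a well-understood element of the group ring of the cyclic group $\Z/f\Z$: writing $t = [\ol 1]$ for the standard generator, we have $R = \Z[t]/(t^f - 1)$ and $x = \tilde x(t-1)$. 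Then $d_x(y) = \tilde x(t-1)y$, so $y \in \ker d_x$ iff $\tilde x(t-1)y = 0$ in $R$. Since $R$ is a free abelian group (torsion-free) and $\tilde x \neq 0$, this is equivalent to $(t-1)y = 0$, i.e. $y$ lies in the annihilator of $t-1$. A direct computation with the $\Z$-basis $\{1, t, \dots, t^{f-1}\}$ shows $(t-1)y = 0$ forces all coefficients of $y$ to be equal, so $\ker d_x = \Z \cdot N$ where $N = 1 + t + \cdots + t^{f-1} = \sum_{k=0}^{f-1}[\ol k]$ is the norm element; this is visibly isomorphic to $\Z$ and, since $tN = N$, the $\pi_1$-action on it is trivial.

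Next, for $\pi_2(P_{f,x}) = K/xR$, I would use the factorisation $x = \tilde x(t-1)$ together with the standard fact (implicit in Lemma \ref{ccmiso}) that multiplication by $t-1$ gives a surjection $R \twoheadrightarrow K$ with kernel $\Z\cdot N$; equivalently the map $\varphi\colon R \to K$, $r \mapsto (t-1)r$ is onto and $\ker\varphi = \Z N$. Under $\varphi$, the submodule $\tilde x R \subseteq R$ maps onto $\tilde x(t-1)R = xR$. Hence $\varphi$ induces an isomorphism of $\pi_1$-modules
\[
R/(\tilde x R + \Z N) \;\xrightarrow{\ \cong\ }\; K/xR = \pi_2(P_{f,x}).
\]
Now $N = \varepsilon_R(N)\cdot$(something)? — more precisely $N \in \tilde x R$ is false in general, so one must be slightly careful: I would instead argue directly that $K/xR \cong (\Z/\tilde x\Z)\otimes_\Z K$ by tensoring the short exact sequence $0 \to \Z N \to R \xrightarrow{\varphi} K \to 0$ with $\Z/\tilde x\Z$ and chasing the resulting exact sequence, noting that $\varphi$ carries $\tilde x R$ exactly onto $xR$ and that the torsion term $\mathrm{Tor}(\Z/\tilde x\Z, K)$ vanishes since $K$ is free abelian. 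This yields $K/xR \cong \mathrm{coker}(\tilde x \colon K \to K) = (\Z/\tilde x\Z)\otimes_\Z K$, with the $\pi_1$-action the one induced from $K$, as claimed. Alternatively, since $K$ is $\Z$-free on $\{[\ol 1]-[\ol 0],\dots,[\ol{f-1}]-[\ol 0]\}$ and $xR = \tilde x\cdot(t-1)R$, a basis-level computation shows $(t-1)R = K$ as abelian groups precisely when\ldots — here the cleanest route really is the Tor argument above.

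Finally, the annihilation statement: since $\pi_2(P_{f,x}) = (\Z/\tilde x\Z)\otimes_\Z K$ is killed by $\tilde x$, and Whitehead's functor $\Gamma$ satisfies $\Gamma(A)$ is killed by any $n^2$ whenever $A$ is killed by $n$ — this follows from the quadratic-map relations $\gamma(na) = n^2\gamma(a)$ and the bilinear associated form, which give $\tilde x^2\gamma(a) = \gamma(\tilde x a) = \gamma(0) = 0$ for generators and hence $\tilde x^2\ell = 0$ for all $\ell \in \Gamma(\pi_2(P_{f,x}))$ since $\Gamma(A)$ is generated by the image of $\gamma$. I expect the main obstacle to be the middle step: getting the identification $K/xR \cong (\Z/\tilde x\Z)\otimes_\Z K$ cleanly and $\pi_1$-equivariantly, rather than merely as abelian groups, since one must simultaneously track the $\Z$-module structure (to invoke freeness of $K$ and vanishing of Tor) and the $R$-module structure (to get the stated $\pi_1$-action); the Tor exact sequence applied to $0 \to \Z N \to R \to K \to 0$, which is a sequence of $R$-modules, handles both at once and is the key device.
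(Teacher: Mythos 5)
Your proposal is correct and follows essentially the same route as the paper: a direct basis computation identifying $\ker d_x$ with $\Z N$, the observation that $xR=\tilde x(t-1)R=\tilde x K$ (so $K/xR=K/\tilde xK=(\Z/\tilde x\Z)\otimes_\Z K$), and the standard relation $\gamma(\tilde x a)=\tilde x^2\gamma(a)$ for the final annihilation claim. The Tor exact sequence is an unnecessary detour --- once you know $\varphi\colon r\mapsto(t-1)r$ surjects onto $K$, the equality $xR=\tilde xK$ and right-exactness of $-\otimes_\Z\Z/\tilde x\Z$ already give everything, equivariantly --- but the argument is sound.
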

\begin{proof}
Take $x = \tilde{x}([\ol{1}]-[\ol{0}])$ with $\tilde x \in \mathbb Z$ and $y = \sum_{k=0}^{f-1} y_{\ol{k}} [\ol{k}] \in \ker d_x$. Then
\begin{eqnarray*}
d_x(y) = xy = 0
& \Longleftrightarrow & \tilde{x} \sum_{k=0}^{f-1} y_{\ol{k}} ([\ol{k} + \ol{1}] - [\ol{k}])  = 0\\
& \Longleftrightarrow & y_{\ol{f-1}} = y_{\ol{0}} = y_{\ol{1}} = y_{\ol{2}} = \ldots =  y_{\ol{f-2}} = \tilde y,\\
\end{eqnarray*}
for some $\tilde y \in \mathbb Z$. Hence $y = \tilde y N$.

By (\ref{pi_2ofP_{f,x}}), $\pi_2(P_{f,x}) = K/xR$. As abelian group, $K = \ker \varepsilon$ is freely generated by $\{ [\ol{k}] - [[\ol{0}]\}_{1 \leq k \leq f-1}$ and hence also by $\{ [\ol{k}] - [\ol{k-1}]\}_{1 \leq k \leq f-1}$. For $y = \sum_{i=0}^{f-1} y_{\ol{i}} [\ol{i}] \in R$ we obtain
\begin{eqnarray*}
xy 
& = &  \tilde x \sum_{i=1}^{f-1} y_{\ol{i}} ([\ol{i}] - [\ol{i-1}])  + \tilde x y_{\ol{f-1}}([\ol{0}] - [\ol{f-1}]) \\
& = &  \tilde x \sum_{i=1}^{f-1} y_{\ol{i}} ([\ol{i}] - [\ol{i-1}])  - \tilde x y_{\ol{f-1}} \sum_{i=1}^{f-1} ([\ol{i}] - [\ol{i-1}]) \\
& = & \tilde x  \sum_{i=1}^{f-1} (y_{\ol{i}} - y_{\ol{f-1}}) ([\ol{i}] - [\ol{i-1}]). \\
\end{eqnarray*}
As $\tilde x K \subseteq xR$, we obtain $xR = \tilde x K$ and hence
\[\pi_2(P_{f,x}) = K/xR = K / \tilde x K = (\Z / \tilde x \Z) \otimes _\Z K.\]
If $\tilde x$ is odd, then every element $\ell \in \Gamma(\pi_2 (P_{f,x}))$ has order $\tilde x$. If $\tilde x$ is even, an element $\ell \in \Gamma(\pi_2 (P_{f,x}))$ has order $2\tilde x$ or $\tilde x$. In either case, $\tilde x^2 \ell = 0$ for every $\ell \in \Gamma(\pi_2 (P_{f,x}))$.
\end{proof}
\begin{lem}\label{u_xcrossaddition}
Take $x= \tilde{x}([\ol{1}]-[\ol{0}])$ and $y, z \in  \text{H}_3(\widehat P_{f,x})$. Then 
\[A(y,z) = 0.\]
\end{lem}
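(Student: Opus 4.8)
The plan is to unwind the definition of $A(y,z) = u_x(y|z)$ and show that each contributing term vanishes once we restrict to $y,z \in \t{H}_3(\widehat P_{f,x})$, i.e. to multiples of the norm element $N$ with the very special attaching map $x = \tilde x([\ol1]-[\ol0])$. By the lemma preceding this one (the bilinearity of $A$), $A(y,z) = \omega\big(\Psi(y,z) - \mu(x,y|z)\big)$, so it suffices to show that $\Psi(y,z)$ and $\mu(x,y|z)$, when evaluated on $y = \tilde y N$ and $z = \tilde z N$, land in the subgroup of $R \otimes R$ on which $\omega$ vanishes — equivalently, by commutativity of the defining diagram of the quadratic module, that $w$ kills their image in $R\otimes R/\Gamma(K)$, or more directly that $\omega$ of each term is already trivial in $\Gamma(\pi_2 P_{f,x})$.

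First I would compute $\Psi(y,z)$ for $y = \tilde y N$, $z = \tilde z N$ using Lemma \ref{tcross1}: here every coefficient $y_{\ol m} = \tilde y$ and $z_{\ol n} = \tilde z$, so $\Psi(\tilde y N, \tilde z N) = \tilde y\tilde z \sum_{m=1}^{f-1}\sum_{n=0}^{m-1} x[\ol n]\otimes x[\ol m]$. Since $x = \tilde x([\ol1]-[\ol0])$, each tensor factor $x[\ol n]$ is $\tilde x$ times an element of $K$, so every summand lies in $\tilde x^2 (K\otimes K)$; applying $\omega$ and pushing into $\Gamma(\pi_2 P_{f,x})$, Lemma \ref{speciald_x} tells us $\tilde x^2$ annihilates $\Gamma(\pi_2 P_{f,x})$, hence $\omega\Psi(y,z) = 0$. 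Next I would treat $\mu(x,y|z)$ via Lemma \ref{mu(x,y|z)}: each of its three terms carries an explicit factor built from $\nabla(x^{\ol i},x^{\ol j})$, $Q_i(x)$, or $\nabla(x,x)^{\ol i}$, and inspecting the formulae for $\nabla$ (Lemma \ref{scross1}), $Q_k$ and $L_k$ (Lemma \ref{scross2}) shows that every one of these is quadratic in the coefficients of $x$, i.e. carries a factor $\tilde x^2$, so the same annihilation argument applies after pushing through $\omega$.

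The main obstacle will be bookkeeping rather than conceptual: one must verify carefully that $\omega$ composed with the projection $R\otimes R/\Delta_B \to \pi_3 \to \t{H}_3$ really does factor the relevant expressions through $\Gamma(\pi_2 P_{f,x})$ so that the $\tilde x^2$-torsion statement of Lemma \ref{speciald_x} is applicable — in other words, that $\Psi(y,z) - \mu(x,y|z)$, being a sum of elements of $\tilde x^2(R\otimes R)$ with entries actually in $K$, maps into the image of $\Gamma(\pi_2 P_{f,x}) \hookrightarrow R\otimes R/\Delta_B$ under $\omega$, and that this image is exactly $\ker q$, on which $\tilde x^2$ acts as zero. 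A cleaner route, which I would adopt if the direct term-by-term check proves awkward, is to observe that $A$ takes values in $\Gamma(\pi_2 P_{f,x})$ by construction, that $A$ is bilinear over $\Z$ in $(\tilde y,\tilde z)$, and that every term in $\Psi$ and in $\mu(x,\cdot|\cdot)$ visibly acquires a factor $\tilde x^2$ once $x=\tilde x([\ol1]-[\ol0])$ is substituted; combining bilinearity with $\tilde x^2\Gamma(\pi_2 P_{f,x})=0$ forces $A(y,z)=0$ for all $y,z$.
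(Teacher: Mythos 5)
Your proposal is correct and follows essentially the same route as the paper: substitute $y=\tilde y N$, $z=\tilde z N$ into $A(y,z)=\omega(\Psi(y,z)-\mu(x,y|z))$, observe via Lemmata \ref{tcross1} and \ref{mu(x,y|z)} that every summand (built from $x[\ol n]\otimes x[\ol m]$, $\nabla(x^{\ol i},x^{\ol j})$, $Q_i(x)$ and $\nabla(x,x)^{\ol i}$) is quadratic in the coefficients of $x$ and hence carries a factor $\tilde x^2$, and conclude with the $\tilde x^2$--annihilation statement of Lemma \ref{speciald_x}. Your additional remark that $A$ lands in $\Gamma(\pi_2 P_{f,x})$ by construction, so that the torsion statement applies, is exactly the (implicit) justification the paper relies on.
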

\begin{proof}
By definition, 
\[A(y,z) = u_x(y|z) = t_x(y|z) - \omega \mu(x, y|z) = \omega ( \Psi(y,z) - \mu(x, y|z)).\] 
The definition of $\Psi$ and Lemma \ref{mu(x,y|z)} yield
\begin{equation*} 
\Psi(y,z) - \mu(x, y|z)) = \tilde y \tilde z \big(\sum_{p=1}^{f-1} \sum_{q=0}^{p-1} x [\ol{q}] \otimes x [\ol{p}] + 2  \sum_{q=1}^{f-1} \sum_{p=0}^{p-1} \nabla(x^{\ol{p}}, x^{\ol{q}}) - 2 \sum_{p=1}^{f-1} Q_p(x) + \sum_{p=0}^{f-1} (\nabla(x, x))^{\ol{p}} \big). 
\end{equation*}
Recall that $\tilde x^2 \ell = 0$ for every $\ell \in \Gamma(\pi_2 (P_{f,x}))$ and note that, by the properties of $Q$ and $\nabla$, each summand in the above sum has a factor of $\tilde x^2$. 
\end{proof}
\begin{lem}\label{u_xcrossaction}
Let $\gamma: \pi_2(P_{f,x}) \rightarrow \Gamma(\pi_2(P_{f,x}))$ be the universal quadratic map for the Whitehead functor $\Gamma$. Take $q: K \rightarrow \pi_2(P_{f,x}), k \mapsto 1 \otimes k, x = \tilde{x}([\ol{1}]-[\ol{0}])$ and $y = \tilde y N$. Then
\begin{equation*}
B(y) = - \tilde x \tilde y \gamma q ([\ol{1}] - [\ol{0}]).
\end{equation*}
\end{lem}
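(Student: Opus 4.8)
The plan is to compute $B(y) = (u_x(y))^1 - u_x(y^1)$ directly from the definition $u_x(y) = t_x(y) - \omega\mu(x,y)$, using the cross-effect formulae for $t_x$ established in Lemma \ref{tcross2} and the description of $\mu$ in Lemma \ref{mu}. Since $y = \tilde y N$ is fixed by the $\pi_1$-action (Lemma \ref{speciald_x}), we have $y^{\ol 1} = y$, so $u_x(y^1) = t_x(y) - \omega\mu(x,y)$ and $(u_x(y))^1 = (t_x(y))^1 - (\omega\mu(x,y))^1$. Thus
\[
B(y) = (t_x(y))^1 - t_x(y) - \big((\omega\mu(x,y))^1 - \omega\mu(x,y)\big).
\]
First I would apply Lemma \ref{tcross2} with $y = \tilde y N$: here $y_{\ol p} = \tilde y$ for all $p$, so $\ol\Psi_1(a,b)$ becomes $\tilde y^2\sum_{p=0}^{f-2} x[\ol{p+1}]\otimes x[\ol 0] + \tilde y(x\otimes[\ol 0] + [\ol 0]\otimes x)$. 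Every term in the first sum carries a factor $\tilde x^2$ (since $x = \tilde x([\ol 1]-[\ol 0])$), and by Lemma \ref{speciald_x} such terms vanish in $\Gamma(\pi_2)$; so after passing through $\omega$ only the term $\tilde y\,\omega(x\otimes[\ol 0] + [\ol 0]\otimes x)$ survives.

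Next I would handle the correction term: $(\omega\mu(x,y))^1 - \omega\mu(x,y)$. Using equivariance of $\omega$ and the identity $(\ref{qdef1})$, or more simply the fact that $\mu(x,y) \in R\otimes R/\Delta_B$ and the action on $\Gamma(\pi_2)$-type elements, I expect every summand of $\mu(x,y)$ (as read off from Lemma \ref{mu}, specialised to $y=\tilde y N$) again to carry a factor $\tilde x^2$ in the relevant quotient, killing this contribution entirely — exactly as in the proof of Lemma \ref{u_xcrossaddition}. This reduces the computation to
\[
B(y) = \tilde y\,\omega(x\otimes[\ol 0] + [\ol 0]\otimes x).
\]
Finally, I would rewrite $\omega(x\otimes[\ol 0] + [\ol 0]\otimes x)$ with $x = \tilde x([\ol 1]-[\ol 0])$, expanding bilinearly: the terms $[\ol 0]\otimes[\ol 0]$ cancel and, working modulo $\tilde x^2$ and modulo $\Delta_B = \Gamma(B)+[K,B]$, the image under $\omega$ lands in $\Gamma(\pi_2) \subseteq R\otimes R/\Delta_B$ and is identified with $-\tilde x\,\gamma q([\ol 1]-[\ol 0])$ via the standard relation between the quadratic map $\gamma$ and the symmetrised tensor $k\otimes k' + k'\otimes k$ — picking up the sign and the single factor of $\tilde x$ (one factor of $\tilde x$ is absorbed into the identification of $[K,B]$-classes, leaving $\tilde x$ rather than $\tilde x^2$).

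The main obstacle I anticipate is the bookkeeping in this last identification: tracking exactly how $\omega(x\otimes[\ol 0]+[\ol 0]\otimes x)$ maps into $\Gamma(\pi_2(P_{f,x})) = \Gamma((\Z/\tilde x\Z)\otimes_\Z K)$ and why precisely one power of $\tilde x$ (not two, not zero) remains, together with the correct sign. This requires careful use of the description of $\Delta_B$ and of the universal property of $\gamma$ relating $\gamma(q(k))$ to the class of $q(k)\otimes q(k)$, and the verification that the cross term $[\ol 1]\otimes[\ol 0] + [\ol 0]\otimes[\ol 1]$ contributes a term already accounted for by $[K,B]$. Everything else — the vanishing of the $\tilde x^2$-divisible summands and the use of $y^{\ol 1}=y$ — is routine given Lemmata \ref{speciald_x}, \ref{tcross2}, and \ref{mu}.
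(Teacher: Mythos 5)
Your overall decomposition $B(y) = (t_x(y))^1 - t_x(y) - \big((\omega\mu(x,y))^1 - \omega\mu(x,y)\big)$ and your treatment of the $\ol\Psi_1$--term are exactly what the paper does, but there is a genuine error in the middle step: the correction term $(\omega\mu(x,y))^1 - \omega\mu(x,y)$ does \emph{not} vanish, and it is needed for the final answer. The analogy with Lemma \ref{u_xcrossaddition} fails because there one only needs the cross--effect $\mu(x,y|z)$, which by Lemma \ref{mu(x,y|z)} is built from $\nabla$ and $Q_i$, both quadratic in the coefficients of $x$, hence killed by $\tilde x^2\ell = 0$. For $\mu(x,y)$ itself, however, Lemma \ref{mu} involves $\ol\nabla_i$, whose summand $L_i$ is only \emph{linear} in its argument: specialising to $y = \tilde y N$ one finds that every term dies except $L_{f-1}(\tilde x\tilde y([\ol 1]-[\ol 0])) = \tilde x\tilde y\,[\ol 0]\otimes[\ol 0]$, which carries a single factor of $\tilde x$ and survives. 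Hence $-(\mu(x,y))^1 + \mu(x,y) = \tilde x\tilde y\big([\ol 0]\otimes[\ol 0] - [\ol 1]\otimes[\ol 1]\big)$, which must be added to the surviving part $\tilde x\tilde y\big([\ol 1]\otimes[\ol 0] + [\ol 0]\otimes[\ol 1] - 2\,[\ol 0]\otimes[\ol 0]\big)$ of $\ol\Psi_1(y)$.

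Once this term is restored, the sum is
\[
\tilde x\tilde y\big([\ol 1]\otimes[\ol 0] + [\ol 0]\otimes[\ol 1] - [\ol 0]\otimes[\ol 0] - [\ol 1]\otimes[\ol 1]\big) = -\tilde x\tilde y\,([\ol 1]-[\ol 0])\otimes([\ol 1]-[\ol 0]),
\]
which is literally $-\tilde x\tilde y\,\tau\gamma([\ol 1]-[\ol 0])$, i.e.\ $-\tilde x\tilde y\,\gamma q([\ol 1]-[\ol 0])$ in $\Gamma(\pi_2)$; no delicate bookkeeping with $\Delta_B$ or ``absorbing a factor of $\tilde x$ into $[K,B]$'' is required. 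Your truncated expression $\tilde y\,\omega(x\otimes[\ol 0]+[\ol 0]\otimes x)$ cannot be the answer as it stands: it is not even an element of $K\otimes K$, let alone of $\Gamma(K)$, so it cannot represent an element of $\Gamma(\pi_2)$. The dropped $\mu$--contribution is precisely what completes the square. The heuristic you offer for why one power of $\tilde x$ rather than two should remain is therefore not salvageable in the form given; the correct mechanism is that $L_{f-1}$ and the $[\ol 0]\otimes x$, $x\otimes[\ol 0]$ terms of $\ol\Psi_1$ are each linear in $\tilde x$ and assemble into a genuine $\gamma$--class.
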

\begin{proof}
Note that $y^\beta = y$ for $\beta \in \pi_1$. As $\tilde x^2 \ell = 0$ for every $\ell \in \Gamma(\pi_2 (P_{f,x}))$, any summand with a factor $\tilde x^2$ is equal to $0$. By Lemma \ref{tcross2},
\begin{eqnarray*}
\ol{\Psi}_1(y) 
& = & \sum_{p=0}^{f-2} \tilde{y}^2 \big( \tilde{x}([\ol{1}]-[\ol{0}]) [\ol{p+1}] \otimes (\tilde{x}[\ol{1}]-[\ol{0}])  \big) + \tilde{y} \big(\tilde{x}([\ol{1}]-[\ol{0}]) \otimes [\ol{0}] + [\ol{0}] \otimes \tilde{x}([\ol{1}]-[\ol{0}]) \big) \\
& = & \tilde{x} \tilde{y} (([\ol{1}] - [\ol{0}]) \otimes [\ol{0}] + [\ol{0}] \otimes ([\ol{1}] - [\ol{0}])).
\end{eqnarray*}
Lemma \ref{mu} yields
\begin{eqnarray*}
\mu(x, y) 
& = & - \sum_{q=0}^{f-1} \sum_{p=0}^{q-1} \tilde x^2 \tilde{y}^2 \nabla \big(([\ol{p+1}] - [\ol{p}]), ([\ol{q+1}] - [\ol{q}]) \big) + \sum_{p=0}^{f-1}  \ol{\nabla}_p \big( \tilde{y}\tilde{x}([\ol{1}] - [\ol{0}]) \big) \\
&& \quad \quad - \tilde x^2 \binom{\tilde{y}}{2} \big(\nabla (([\ol{1}] - [\ol{0}]), ([\ol{1}] - [\ol{0}]))\big)^{\ol{p}} \\
& = &  \ol{\nabla}_{f-1} \big(\tilde{x} \tilde{y}([\ol{1}] - [\ol{0}])\big) \\
& = & - \, \tilde{x}^2 \tilde{y}^2 \big( [\ol{f-1}] \otimes [\ol{0}] - [\ol{0}] \otimes [\ol{0}] \big) + \tilde{x} \tilde{y}  \,[\ol{0}] \otimes [\ol{0}]  \\
& = & \tilde{x} \tilde{y}  \,[\ol{0}] \otimes [\ol{0}]. \\
\end{eqnarray*}
Thus
\[B(y) = (u_x(y))^1 - u_x(y^{\ol{1}}) = \omega\big( \ol{\Psi}_1(y) - (\mu(x,y))^{1} + \mu(x,y)\big) = - \tilde x \tilde y \gamma q ([\ol{1}] - [\ol{0}]).\]
\end{proof}

Together Lemmata \ref{speciald_x}, \ref{u_xcrossaddition} and \ref{u_xcrossaction} provide a proof of Theorem \ref{AandBforex}.

For $f=2$ the special case coincides with the general case and we obtain 
\begin{thm}\label{f=2}
Let $X = P_{2,x}$ be a pseudo--projective $3$--space with $x = \tilde x ([\ol{1}] - [\ol{0}])$, for $\tilde x \in \Z$ and $\tilde x \neq 0$. Then $u_x$ is a homomorphism and the fundamental group $\pi_1 = \Z / 2 \Z$ acts trivially on $\Gamma (\pi_2 P_{2, x})$ and on $\text{H}_3 \widehat P_{2, x}$. The action of $\pi_1$ on $\pi_3 P_{2, x}$ is non--trivial  if and only if $\tilde x$ is even.
\end{thm}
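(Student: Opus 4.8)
The plan is to deduce everything from Theorem \ref{AandBforex} (i.e.\ Lemmata \ref{speciald_x}, \ref{u_xcrossaddition} and \ref{u_xcrossaction}) together with Remark \ref{obsAB}, and then to pin down the single element of $\Gamma(\pi_2 P_{2,x})$ that governs the $\pi_1$--action.

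First I would record that for $f=2$ the augmentation ideal $K = \ker(2\varepsilon\colon R\to\Z)$ is free abelian of rank one on $[\ol{1}]-[\ol{0}]$, so every attaching map $x\in\pi_2(P_2)=K$ is automatically of the form $x=\tilde x([\ol{1}]-[\ol{0}])$ with $\tilde x\in\Z$; hence $P_{2,x}$ with $x\neq 0$ is exactly the situation treated in the preceding lemmas. Applying them yields at once: $\text{H}_3(\widehat P_{2,x})\cong\Z$ with trivial $\pi_1$--action (generated by the norm element $N$), $\pi_2(P_{2,x})=(\Z/\tilde x\Z)\otimes_\Z K$, the cross--effect $A$ vanishes, and $B(\tilde y N) = -\tilde x\tilde y\,\gamma q([\ol{1}]-[\ol{0}])$. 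Since $A=A_{u_x}=0$ and $u_x(0)=0$ by construction, $u_x$ is additive, which is the first assertion (``$u_x$ is a homomorphism'' of abelian groups).

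Next I would compute the $\pi_1$--action on $\Gamma(\pi_2 P_{2,x})$. The generator of $\pi_1=\Z/2\Z$ acts on the rank--one free abelian group $K$ by $([\ol{1}]-[\ol{0}])^{\ol{1}} = [\ol{0}]-[\ol{1}] = -([\ol{1}]-[\ol{0}])$, hence by $-\,\text{id}$ on $\pi_2(P_{2,x})=(\Z/\tilde x\Z)\otimes_\Z K$; since $\Gamma$ is a functor and $\gamma(-v)=\gamma(v)$, we have $\Gamma(-\,\text{id})=\text{id}$, so $\pi_1$ acts trivially on $\Gamma(\pi_2 P_{2,x})$. Together with the trivial action on $\text{H}_3(\widehat P_{2,x})$ this gives the second assertion. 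Then, with $A=0$ and trivial actions on both end terms, Remark \ref{obsAB} identifies $\pi_3 P_{2,x}$ with $\text{H}_3(\widehat P_{2,x})\times\Gamma(\pi_2 P_{2,x})$, on which the generator acts by $(y,v)^1 = (y, v+B(y))$; hence the $\pi_1$--action on $\pi_3 P_{2,x}$ is trivial if and only if $B=0$, that is, if and only if $B(N)=-\tilde x\,\gamma q([\ol{1}]-[\ol{0}])=0$.

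The remaining, and only substantial, point is to show that $\tilde x\,\gamma q([\ol{1}]-[\ol{0}])=0$ exactly when $\tilde x$ is odd. Here $q([\ol{1}]-[\ol{0}]) = 1\otimes([\ol{1}]-[\ol{0}])$ generates the cyclic group $\pi_2 P_{2,x}\cong\Z/\tilde x\Z$, so this reduces to the structure of Whitehead's functor on a finite cyclic group: for $\tilde x$ odd, $\Gamma(\Z/\tilde x\Z)\cong\Z/\tilde x\Z$ (equivalently, use the exponent bound in Lemma \ref{speciald_x}), so $\tilde x\,\gamma q([\ol{1}]-[\ol{0}])=0$; for $\tilde x$ even, $\gamma$ of a generator has additive order $2\tilde x$, so $\tilde x\,\gamma q([\ol{1}]-[\ol{0}])$ is the nonzero element of order $2$. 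Thus $B=0$ iff $\tilde x$ is odd, and the $\pi_1$--action on $\pi_3 P_{2,x}$ is nontrivial iff $\tilde x$ is even. I expect this last step --- determining $\Gamma(\Z/\tilde x\Z)$ and the order of $\gamma$ of a generator in the even case --- to be the main obstacle; everything else is a direct application of the lemmas already established in Sections \ref{pi_3{P_{f,x}}}--\ref{examples} and of Remark \ref{obsAB}.
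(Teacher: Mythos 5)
Your proposal is correct and follows essentially the same route as the paper: reduce to the special-case Lemmata \ref{speciald_x}, \ref{u_xcrossaddition} and \ref{u_xcrossaction}, observe that the generator acts by $-\mathrm{id}$ on $\pi_2$ so $\Gamma$ kills the action, and decide non-triviality of the action on $\pi_3$ from the order of $\gamma$ of a generator in $\Gamma(\Z/\tilde x\Z) \cong \Z/\gcd(\tilde x,2)\tilde x\,\Z$. You are in fact slightly more explicit than the paper in deducing ``$u_x$ is a homomorphism'' from $A=0$ and in routing the final equivalence through Remark \ref{obsAB}, but this is the same argument.
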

\begin{proof}
For $f=2$ the augmentation ideal $K$ is generated by $k = [\ol{1}]-[\ol{0}]$. Since $k[\ol{1}] = -k$, the action of $\pi_1 = \Z / 2 \Z$ on $K$ and hence on $\pi_2 P_{2, x} = K/xR = \Z / \tilde x \Z$ is multiplication by $-1$. As the $\Gamma$--functor maps multiplication by $-1$ to the identity morphism, the action on $\pi_1$ on $\Gamma (\pi_2 P_{2, x})$ is trivial. The group $\text{H}_3 \widehat P_{2, x}$ is generated by the norm element $N = [\ol{0} ]+ [\ol{1}]$. As $N [\ol{1}] = N$, $\pi_1$ acts trivially on $\text{H}_3 \widehat P_{2, x}$. As $\pi_2 = \Z / \tilde x \Z$ is cyclic, $\Gamma \pi_2 = \pi_2$ if $\tilde x$ is odd and $\Gamma \pi_2 = \Z / 2 \tilde x \Z$ if $\tilde x$ is even, that is,
\begin{equation}\label{Gammapi2}
\Gamma \pi_2 = \Z / \gcd (\tilde x, 2) \tilde x \, \Z. 
\end{equation}
By Lemma \ref{u_xcrossaction} and (\ref{Gammapi2}), the action of $\pi_1$ on $\pi_3 X$ is non--trivial if and only if $\tilde x$ is even.
\end{proof}

Theorem \ref{non-trivialpi1action} is a corollary to Theorem \ref{f=2}.

\begin{proof}[Proof of \ref{extcalc}]
Note that $\Z / \tilde x \Z \otimes_\Z K$ is generated by $\{\alpha_k = q([\ol{k}] - [\ol{k-1}])\}_{0<k<f}$, where $q: K \rightarrow \Z /\tilde x \Z \otimes_\Z K, k \mapsto 1 \otimes k$. Thus
$\Gamma(\pi_2(P_{f,x})) = \Gamma(\Z / \tilde x \Z \otimes K) \subseteq (\Z /\tilde x \Z \otimes_\Z K) \otimes (\Z /\tilde x \Z \otimes_\Z K)$ is generated by $\{ \gamma q(\alpha_k), [q(\alpha_j), q(\alpha_k)]\}_{0<j<k, 0<k<f}$. With $\alpha_k^1 = \alpha_{k+1}$ for $1<k<f-1$ and $\alpha_{f-1}^1 = [\ol{0}] - [\ol{f-1}] = -\sum_{i=1}^{f-1} \alpha_i$, we obtain, for $\ell = \sum_{k=1}^{f-1} \ell_k \gamma(\alpha_k) + \sum_{k=2}^{f-1} \sum_{j=1}^{k-1} \ell_{j,k} [\alpha_j, \alpha_k] \in \Gamma(\pi_2(P_{f,\tilde x}))$,
\begin{eqnarray*}
\ell^1 - \ell
& = & \sum_{k=1}^{f-1} \ell_k \gamma q(\alpha_k)^1 + \sum_{k=2}^{f-1} \sum_{j=1}^{k-1} \ell_{j,k} \,\,[q(\alpha_j), q(\alpha_k)]^1 - \sum_{k=1}^{f-1} \ell_k \gamma q(\alpha_k) - \sum_{k=2}^{f-1} \sum_{j=1}^{k-1} \ell_{j,k} [q(\alpha_j), q(\alpha_k)]\\
& = & \sum_{k=1}^{f-2} \ell_k \gamma q(\alpha_{k+1}) + \ell_{f-1} \gamma q(- \sum_{i=1}^{f-1} \alpha_i) + \sum_{k=2}^{f-2} \sum_{j=1}^{k-1} \ell_{j,k} \,\,[q(\alpha_{j+1}), q(\alpha_{k+1})] \\
&& + \sum_{j=1}^{f-1} \ell_{j,f-1} [\gamma q(\alpha_{j+1}), \gamma q(-\sum_{i=1}^{f-1} \alpha_i)] - \sum_{k=1}^{f-1} \ell_k \gamma q(\alpha_k) - \sum_{k=2}^{f-1} \sum_{j=1}^{k-1} \ell_{j,k} [q(\alpha_j), q(\alpha_k)]\\
& = & (\ell_{f-1} - \ell_1) \gamma q(\alpha_1) + \sum_{k=2}^{f-1} (\ell_{k-1} - \ell_k + \ell_{f-1} - 2 \ell_{k-1,f-1}) \gamma q(\alpha_k) \\
&& + \sum_{k=2}^{f-1} (\ell_{f-1} - \ell_{1,k} - \ell_{k-1,f-1}) [q(\alpha_1, q(\alpha_k)] \\
&& + \sum_{k=3}^{f-1} \sum_{j=2}^{k-1} (\ell_{f-1} + \ell_{j-1,k-1} - \ell_{j,k} - \ell_{j-1,f-1} -\ell_{k-1,f-1}) [q(\alpha_j), q(\alpha_k)]. 
\end{eqnarray*}
Thus the sequence (\ref{ses1}) splits if and only if there is at least one solution of the system of equations
\[\begin{array}{lll}
(A) & 0 =  \ell_{f-1} - \ell_1 & \mod 2 \tilde x  \\
(B_k) & 0 =  \ell_{k-1} - \ell_k + \ell_{f-1} - 2 \ell_{k-1,f-1} & \mod 2 \tilde x \,\ \text{for} \, 2 \leq k \leq f-1\\
(C_k) & 0 =  \ell_{f-1} - \ell_{1,k} - \ell_{k-1,f-1} & \mod \tilde x  \,\, \text{for} \, 2 \leq k \leq f-1\\
(D_{j,k}) & 0 =  \ell_{f-1} + \ell_{j-1,k-1} - \ell_{j,k} - \ell_{j-1,f-1} -\ell_{k-1,f-1} & \mod \tilde x  \,\,\text{for} \, 2 \leq j \leq k, 2 < k < f-1.\\
\end{array}\]
For odd $f$, a solution of the system is given by $\ell_{j,k} = 0$ for $1 \leq j \leq k-1, 1<k<f-1, \ell_k = 0$ for $k$ odd, and $\ell_k = \tilde x$ for $k$ even. Hence (\ref{ses1}) splits if $f$ is odd. It remains to show that there are no solutions for even $f>2$.

For $2 \leq j < \frac{1}{2}(f-2)$, subtract the equation $(D_{i,f-j+i})$ from the equation $(D_{i,f-j+i-1})$ for $2 \leq i < j$. Add $(D_{j,f-1})$ and $(C_{f-j})$, then subtract $(C_{f-j+1})$. Adding the resulting equations yields
\[(E_{j})  \quad 0 = \ell_{f-1} - \ell_{j,f-1} - \ell_{f-j-1,f-1} \mod \tilde x.\]
Multiplying the equations $(C_{f-1})$ and $(E_j), 2 \leq j \leq \frac{1}{2}(f-2)$ by $2$ and adding them we obtain
\[0 = (f-2)\ell_{f-1} - 2 \sum_{j=1}^{f-2} \ell_{j,f-1} \mod 2 \tilde x.\]
On the other hand, adding the equations $(A)$ and $(B_k), 1<k<f-1$, the resulting equation is
\[\tilde x = (f-2)\ell_{f-1} - 2 \sum_{j=1}^{f-2} \ell_{j,f-1} \mod 2 \tilde x.\]
Hence there are no solutions for $f$ even.
\end{proof}

\section{Pseudo--Projective Spaces in Dimension $4$}
In the final section we consider $4$--dimensional pseudo--projective spaces and provide a proof of Theorem \ref{trivialonpi_3}. We begin by constructing a $4$--dimensional pseudo--projective space associated to given algebraic data. Namely, take $f \in \Z$ with $f \geq 0, x, y \in R = \Z[\Z/f\Z]$ with $xy = 0$ and $f\varepsilon(x) = 0$, where $\varepsilon$ is the augmentation of the group ring, $R$, so that $xR \subseteq \ker \varepsilon$. Finally, take $\gamma \in \Gamma((\ker f \varepsilon) / xR)$. Given such data, $(f, x, y, \alpha)$, take a $3$--dimensional pseudo--projective space $P_{f,x}$ as in (\ref{P_{f,x}}). Then the set--theoretic splitting $u_x$ of the short exact sequence
\begin{equation*}
\xymatrix{
\Gamma(\pi_2 ({P}_{f,x})) \,\, \ar@{>->}[r] & \pi_3 ({P}_{f,x}) \ar@{->>}[r] & \text{H}_3(\widehat{P}_{f,x})}
\end{equation*}
implies that every element of $\pi_3 ({P}_{f,x})$ may be expressed uniquely as a sum $u_x(v) + \beta$ with $v \in  \text{H}_3(\widehat{P}_{f,x})$, that is, $xv = 0$, and $\beta \in \Gamma(\pi_2 ({P}_{f,x})) = \Gamma((\ker f \varepsilon) / xR)$, see (\ref{pi_2ofP_{f,x}}). Using $u_x(y) + \alpha \in \pi_3 ({P}_{f,x})$ to attach a 4--cell to $P_{f,x}$ we obtain the $4$--dimensional pseudo--projective space,
\[P = P_{f,x,y,\alpha} = S_1 \cup e^2 \cup e^3 \cup e^4.\]
Note that the homotopy type of $P = P_{f,x,y,\alpha}$ is determined by $(f, x, y, \alpha)$ and that every 4--dimensional pseudo--projective space is of this form. The cellular chain complex, $C_{\ast}(\widehat P)$, of the universal cover, $\widehat{P} = \widehat{P}_{f,x,y,\alpha}$, is the complex of free $R$--modules, 
\begin{equation*}
\xymatrix{
\langle e_4 \rangle_R \ar[r]^{d_4} & \langle e_3 \rangle_R \ar[r]^{d_3} & \langle e_2 \rangle_R \ar[r]^{d_2} & \langle e_1 \rangle_R \ar[r]^{d_1} & \langle e_0 \rangle_R,}
\end{equation*}
given by $d_1(e_1) = e_0([\ol{1}] - [\ol{0}]), d_2(e_2) = e_1 N$, that is, multiplication by the \emph{norm element}, $N = \sum_{i=0}^{f-1} [\ol{i}], d_3(e_3) = e_2 x$, and $d_4(e_4) =  e_3 y$. Let $\bar b: R \rightarrow \pi_3 P_{f,x}$ be the homomorphism of $R$--modules which maps the generator $[\ol{0}] \in R$ to $\bar b([\ol{0}]) = u_x(y) + \alpha$, so that composition with the projection onto $ \text{H}_3 \widehat P_{f,x}$ yields the homomorphism of $R$--modules induced by the boundary operator $d_4$. Thus we obtain the commutative diagram 
\begin{equation*}
\xymatrix{
\text{H}_4\widehat P \ar[r]^b \ar@{>->}[d] & \Gamma \pi_2 P \ar[dr]^j  \ar@{>->}[d]  & \\
R \ar[r]^{\bar{b}} \ar[dr]_{\bar{d_4}} & \pi_3 P_{f,x} \ar@{->>}[r] \ar@{->>}[d] & \pi_3 P \ar@{->>}[d]^h \\
& \text{H}_3 \widehat P_{f,x}  \ar@{->>}[r] & \text{H}_3 \widehat P  }
\end{equation*}
in the category of $R$--modules, where the middle column is the short exact sequence (\ref{JHCses}) and 
\begin{equation}\label{JHCses2}
\xymatrix{
\text{H}_4\widehat{P} \ar[r]^b & \Gamma\pi_2 P \ar[r]^j & \pi_3 P \ar@{->>}[r]^h & \text{H}_3 \widehat{P}}
\end{equation}
is Whitehead's Certain Exact Sequence of the universal cover, $\widehat{P} = \widehat{P}_{f,x,y,\alpha}$.

Now we restrict attention to the case $f = 2$. Then $\pi_1 = \pi_1 P = \Z / 2\Z$ and the augmentation ideal, $K$ is generated by $[\ol{1}] - [\ol{0}]$. Thus
\begin{equation*}
x = \tilde x([\ol{1}] - [\ol{0}]) \quad \t{and} \quad y = \tilde y([\ol{1}] + [\ol{0}]), \quad \text{for some} \,\, \tilde x, \tilde y \in \Z.
\end{equation*}
We assume that $x$ and $y$ are non--trivial, that is, $\tilde x, \tilde y \neq 0$.
\begin{thm}\label{trivialonpi_3detail}
For $P = P_{2,x,y,\alpha}$, with $x$ and $y$ as above, $\pi_1 P = \Z / 2 \Z$ acts on $\pi_2 P = \Z / \tilde x Z$ via multiplication by $-1$, trivially on $\t{H}_3 \widehat P = \Z / \tilde y \Z$ and via multiplication by $-1$ on $\t{H}_4 \widehat P = \Z = \langle [\ol{1}] - [\ol{0}] \rangle$. The exact sequence (\ref{JHCses2}) is given by
\begin{equation}\label{JHCses3}
\xymatrix{
\text{H}_4\widehat{P} = \Z \ar[r]^-b & \Gamma\pi_2 P = \Gamma(\Z/ \tilde x \Z) \ar[r]^-j & \pi_3 P \ar@{->>}[r]^-h & \text{H}_3 \widehat{P} = \Z / \tilde y \Z.}
\end{equation}
Denoting the generator of $\Gamma\pi_2 P$ by $\xi$, the boundary $b$ is determined by 
\[b([\ol{1}] - [\ol{0}]) = \tilde x \tilde y \xi,\]
and the action of $\pi_1 P$ on $\pi_3 P$ is trivial. 
As abelian group, $\pi_3 P$ is the extension of $\t{H}_3 \widehat P$ by $\t{coker} \, b$ given by the image of $-\alpha \in \Gamma \pi_2$ under the homomorphism
\begin{equation*}
\xymatrix{
\tau: \Gamma \pi_2 \ar@{->>}[r] & \t{coker}\,b \ar@{->>}[r] & \t{coker}\,b / \tilde y \t{coker}\,b = \t{Ext}(\Z / \tilde y \Z, {\rm{coker}} \, b).}
\end{equation*}
Hence the extension $\pi_3 P$ over $\Z$ determines $\alpha$ modulo $\ker \tau$.
\end{thm}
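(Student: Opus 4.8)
The plan is to read everything off the cellular chain complex $C_\ast(\widehat P)$ displayed above, together with Theorem~\ref{AandBforex} and the fact that attaching the $4$--cell along $g = u_x(y) + \alpha$ replaces $\pi_3 P_{f,x}$ by the quotient $\pi_1$--module $\pi_3 P = \pi_3 P_{f,x}/gR$, where $R = \Z[\pi_1]$ and $gR$ is the $\pi_1$--submodule generated by $g$.

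First I would compute $H_2, H_3, H_4$ of $\widehat P$ as $\pi_1$--modules directly from $C_\ast(\widehat P)$. For $f = 2$ one has $Nr = \varepsilon(r)N$ and $xr = \tilde x(r_{\ol{0}} - r_{\ol{1}})([\ol{1}] - [\ol{0}])$, so the kernels and images of multiplication by $x$, by $y = \tilde y N$, and by $N$ are elementary to determine; they give $\pi_2 P = H_2\widehat P = K/xR = \Z/\tilde x\Z$, $H_3\widehat P = \Z N/\tilde y\Z N = \Z/\tilde y\Z$, and $H_4\widehat P = \ker(\cdot\, y) = K \cong \Z = \langle [\ol{1}] - [\ol{0}]\rangle$. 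Since the deck transformation fixes $N$ and negates $[\ol{1}] - [\ol{0}]$, this also gives the asserted actions, and as $\Gamma$ sends multiplication by $-1$ to the identity, $\pi_1$ acts trivially on $\Gamma\pi_2 P$; write $\xi = \gamma q([\ol{1}] - [\ol{0}])$ for its cyclic generator, so $\Gamma\pi_2 P = \Z/\gcd(\tilde x, 2)\tilde x\,\Z$.

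Next I would pin down $b$ in Whitehead's sequence (\ref{JHCses2}) using the commutative diagram preceding the theorem: $b$ is the restriction to $H_4\widehat P \subseteq R$ of the $R$--linear map $\bar b\colon R \to \pi_3 P_{f,x}$ with $\bar b([\ol{0}]) = g$, so $b([\ol{1}] - [\ol{0}]) = g^{[\ol{1}]} - g$. Since $y = \tilde y N$ and $\alpha$ are fixed by $\pi_1$ and $A = 0$ by Theorem~\ref{AandBforex}, the module structure of \ref{obsAB} gives $g^{[\ol{1}]} - g = B(y) = -\tilde x\tilde y\,\xi = \tilde x\tilde y\,\xi$ in $\Gamma\pi_2 P$ (the last equality because $2\tilde x\tilde y\,\xi = 0$), which is (\ref{JHCses3}). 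Moreover $A = 0$ makes $u_x$ additive on $H_3\widehat P_{f,x} = \Z N$, so with $u := u_x(N)$ the formulas of \ref{obsAB} present $\pi_3 P_{f,x}$, as an abelian group, as $\Z u \oplus \Gamma\pi_2 P$ with $u^{[\ol{1}]} = u + B(N) = u - \tilde x\,\xi$ and $\Gamma\pi_2 P$ pointwise fixed; here $g = \tilde y u + \alpha$ and $g^{[\ol{1}]} = g - \tilde x\tilde y\,\xi = g - b([\ol{1}] - [\ol{0}])$, so $gR = \Z g + \text{im}\, b$. Passing to the quotient --- first by $\text{im}\, b$, then by $\Z(\tilde y u + \bar\alpha)$ --- exhibits $\pi_3 P$ as the extension $0 \to \text{coker}\, b \to \pi_3 P \to \Z/\tilde y\Z \to 0$ in which the distinguished lift $u$ of the generator of $\Z/\tilde y\Z$ satisfies $\tilde y\, u = -\bar\alpha$; this says exactly that $\pi_3 P$ is the extension of $H_3\widehat P$ by $\text{coker}\, b$ with class $\tau(-\alpha)$. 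For the $\pi_1$--action on $\pi_3 P$ one has $u^{[\ol{1}]} - u = -\tilde x\,\xi$, so the action is trivial precisely when the image of $\tilde x\,\xi$ in $\text{coker}\, b$ vanishes, i.e. when $\tilde x\,\xi \in \Z\,\tilde x\tilde y\,\xi$ inside $\Gamma\pi_2 P$.

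Granting triviality of this action, the final assertion is immediate: as an abelian group $\pi_3 P$ is the extension of $H_3\widehat P$ by $\text{coker}\, b$ classified by $\tau(-\alpha) \in \text{coker}\, b/\tilde y\,\text{coker}\, b = \text{Ext}(\Z/\tilde y\Z, \text{coker}\, b)$, so this extension recovers $\tau(-\alpha)$, and since $\ker\tau$ is exactly the indeterminacy in passing back from $\tau(-\alpha)$ to $\alpha$, the extension $\pi_3 P$ determines $\alpha$ modulo $\ker\tau$. I expect the real obstacle to be the verification in the previous paragraph that $\tilde x\,\xi$ becomes zero in $\text{coker}\, b$ --- equivalently that $\pi_1$ acts trivially on $\pi_3 P$ --- which turns on the order of $\xi$ in $\Gamma\pi_2 P$ and on how $\text{im}\, b = \Z\,\tilde x\tilde y\,\xi$ sits inside it; a secondary, purely bookkeeping, difficulty is keeping signs straight when transporting $\bar b$ through the quadratic--module diagram to read off $b$.
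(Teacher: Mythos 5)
Your proposal follows the paper's own route almost step for step: the homology of $\widehat P$ read off the cellular chain complex, the boundary $b([\ol{1}]-[\ol{0}]) = g^{[\ol{1}]}-g = B(y) = \pm\tilde x\tilde y\,\xi$ via Lemma \ref{u_xcrossaction}, the identification $gR = \Z g + \t{im}\,b$, and the two--step quotient exhibiting $\pi_3 P$ as the extension of $\t{H}_3\widehat P$ by $\t{coker}\,b$ with class $\tau(-\alpha)$. All of that is correct and is exactly what the paper does.

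The genuine gap is the one you flagged yourself and then deferred: the triviality of the $\pi_1$--action on $\pi_3 P$. You correctly reduce it to the condition $\tilde x\,\xi\in\Z\,\tilde x\tilde y\,\xi$ inside $\Gamma\pi_2 P = \Z/\gcd(\tilde x,2)\tilde x\,\Z$, but you never verify it, and it is not a formality. The condition holds when $\tilde x$ is odd (then $\tilde x\,\xi=0$) and when $\tilde x$ is even and $\tilde y$ is odd (then $\tilde x\tilde y\,\xi=\tilde x\,\xi$), but when $\tilde x$ and $\tilde y$ are both even the element $\tilde x\,\xi$ has order $2$ while $\tilde x\tilde y\,\xi=0$, so $\bar u^{[\ol{1}]}-\bar u=-\tilde x\,\xi$ survives as a nonzero element of $\t{coker}\,b$ and your reduction shows the action is \emph{not} trivial. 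For comparison, the paper closes this step by writing $(u(v)+\beta)^{[\ol{1}]}=u(v)+\beta-\tilde x\tilde y\,\omega(\xi)$ for \emph{every} $v\in\t{H}_3\widehat P_{2,x}$, i.e.\ it applies Lemma \ref{u_xcrossaction} with the coefficient $\tilde y$ of the attaching element $y=\tilde y N$ where the coefficient $\tilde v$ of a general $v=\tilde v N$ should appear; your formula $u^{[\ol{1}]}-u=-\tilde x\,\xi$ is the correct one. So the step you labelled ``the real obstacle'' is precisely the step on which the assertion of trivial action stands or falls, and your own (correct) bookkeeping indicates it cannot be completed as stated for $\tilde x,\tilde y$ both even. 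The remaining assertions of the theorem (the module identifications, the formula for $b$, and the extension--class statement, which you correctly note only needs the action already computed) are established by your argument.
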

Theorem \ref{trivialonpi_3} is a corollary to Theorem \ref{trivialonpi_3detail}.
\begin{proof}
As the augmentation ideal $K \cong \Z$ is generated by $k = [\ol{1}] - [\ol{0}]$, the action of $\pi_1 = \Z / 2 \Z$ on $K = \pi_2 P_2$ and hence on $\pi_2 P = K / x R = \mathbb Z / \tilde x \mathbb Z$ is multiplication by $-1$, since $k [\ol{1}] = -k$. But the $\Gamma$--functor maps mutliplication by $-1$ to the identity morphism, so that $\pi_1$ acts trivially on $\Gamma (\pi_2 P)$. 

As $d_3(e_3) = e_2 x$, we obtain $\text{H}_3 \widehat P_{2,x} \cong \Z$, generated by the norm element $N = [\ol{1}] + [\ol{0}]$. Since $N [\ol{1}] = N$, the action of $\pi_1$ on $\text{H}_3 \widehat P_{2,x}$ is trivial. 

As $d_4(e_4) = e_3 y$, we obtain $\text{H}_3 \widehat P \cong \Z / \tilde y \mathbb Z$ and $\text{H}_4\widehat{P} \cong \Z$, generated by $k = [\ol{1}] - [\ol{0}]$. Hence the action of $\pi_1$ on $\text{H}_4\widehat{P}$ is multiplication by $-1$.

Now let $\xi = ([\ol{1}] - [\ol{0}]) \otimes ([\ol{1}] - [\ol{0}])$ be the generator of $\Gamma(K)$. Note that $v[\ol{1}] = v$ and $\beta [\ol{1}] = \beta$, for $v \in \text{H}_3 \widehat P_{2,x}$ and $\beta \in \Gamma (\pi_2 P)$, since $\pi_1$ acts trivially on both $\text{H}_3 \widehat P_{2,x}$ and $\Gamma (\pi_2 P)$. Lemma \ref{u_xcrossaction} implies
\[(u(v) + \beta) [\ol{1}] = - \tilde x \tilde y \, \omega(\xi) + u(v[\ol{1}]) + \omega(\beta) [\ol{1}] = - \tilde x \tilde y \, \omega(\xi) + u(v) + \omega(\beta).\]
We obtain
\begin{eqnarray*}
\bar b (e_4 ([\ol{1}] - [\ol{0}]))
& = & (u(y) + \omega(\alpha)) ([\ol{1}] - [\ol{0}]) \\
& = & - \tilde x \tilde y \, \omega(\xi) + u(y) + \omega(\alpha) - (u(y) + \omega(\alpha)) \\
& = & - \tilde x \tilde y \, \omega(\xi).
\end{eqnarray*}
By definition of $\bar{b}$,
\[\pi_3 P = \pi_3 P_{2,x} / {\rm im} \, \bar b.\]
Hence $\pi_1$ acts trivially on $\pi_3(P)$.

Sequence (\ref{JHCses2}) yields the short exact sequence
\begin{equation}\label{gammaext}
\xymatrix{
G = {\rm{coker}} \, b \,\,  \ar@{>->}[r] & \pi_3 P \ar@{->>}[r]^-h & \text{H}_3 \widehat P \cong \Z / \tilde y \Z,}
\end{equation}
which represents $\pi_3 P$ as an extension of $\Z / \tilde y \Z$ by $G = {\rm{coker}} \, b$. Thus the extension $\pi_3 P$ over $\Z$ determines $\gamma$ modulo the kernel of the map
\[\xymatrix{\tau: \Gamma \pi_2 \ar@{->>}[r] & \t{coker}\,b \ar@{->>}[r] & \t{coker}\,b / \tilde y \t{coker}\,b = \t{Ext}(\Z / \tilde y \Z, {\rm{coker}} \, b)}.\]
\end{proof}

\end{document}